\def\ss{\mathbb{S}}
\def\d{ {\cal D} }
\def\a{ {\cal A} }
\def\h{ {\cal H} }
\def\b{ {\cal B} }
\def\u{ {\cal U} }
\def\k{ {\cal K} }
\def\oo{ {\cal O} }
\def\vecx{ \left(\begin{array}{c} x_1 \\ x_2 \end{array} \right) } 
\def\vecy{ \left(\begin{array}{c} y_1 \\ y_2 \end{array} \right) } 
\def\vecz{ \left(\begin{array}{c} z_1 \\ z_2 \end{array} \right) } 
\def\q{ {\cal Q} }
\def\xx{ {\bf x} }
\def\yy{ {\bf y} }
\def\zz{ {\bf z} }
\def\vv{ {\bf v} }
\def\tr{ {\bf tr} }
\def\Exp{ {\bf {\rm Exp}}}
\def\Log{ {\bf {\rm Log}}}
\def\bi{{\bf \langle}}
\def\bd{{\bf \rangle}}
\def\pa{\a\mathbb{P}_1}
\def\g1{ \mathfrak{g}_1  }
\newtheorem{teo}{Theorem}[section]
\newtheorem{prop}[teo]{Proposition}
\newtheorem{lem}[teo]{Lemma}
\newtheorem{coro}[teo]{Corollary}
\newtheorem{defi}[teo]{Definition}
\theoremstyle{definition}
\newtheorem{rem}[teo]{Remark}
\newtheorem{ejem}[teo]{Example}
\title{Projective geometry in the Poincar\'e disk of  a  $C^*$-algebra}
\author{E. Andruchow, G. Corach, L. Recht}
\begin{document}

\maketitle 

\begin{abstract}
We study the Poincar\'e disk $\d=\{a\in\a: \|a\|<1\}$ of a C$^*$-algebra $\a$ from a projective point of view: $\d$ is regarded as an open subset of the projective line $\pa$, the space of complemented rank one submodules of $\a^2$. We introduce the concept of cross ratio of four points in $\pa$. Our main result establishes the relation between the exponential map $\Exp_{z_0}(z_1)$ of $\d$ ($z_0,z_1\in\d$) and the cross ratio of the four-tuple 
$$
\delta(-\infty), \delta(0)=z_0, \delta(1)=z_1 , \delta(+\infty),
$$ 
where $\delta$ is the unique geodesic of $\d$ joining $z_0$ and $z_1$ at times $t=0$ and $t=1$, respectively.
\end{abstract}
\bigskip

{\bf 2010 MSC:}  46L05, 58B20, 22E65, 46L08

{\bf Keywords:}  Projective line, Poincar\'e  disk, C$^*$-algebra.

\section{Introduction}

Let $\a$ be a unital C$^*$-algebra and consider the Poincar\'e disk 
$$
\d=\{z\in\a: \|z\|<1\}
$$ 
and the Poincar\'e halfspace 
$$
\h=\{ h\in\a: \frac{1}{2i}(h-h^*) \hbox{ is positive and invertible}\}.
$$
This paper is the second part of the study began in \cite{tejemas}, where we considered  $\d$ and  $\h$ as homogeneous spaces of the unitary groups $\u(\theta_\d)$ and $\u(\theta_\h)$ of certain quadratic forms $\theta_\d$ and $\theta_\h$ in $\a^2$, respectively. These spaces have reductive connections and invariant  Finsler metrics, which make them non positively curved length spaces (see \cite{gromov}).

In the present paper we study the  $\d$  from the projective point of view. More precisely, we introduce the projective line $\pa$ of $\a$, and regard $\d$ as an open subset of $\pa$. In this way, the group $\u(\theta)$ appears as the group of projective transformations which preserve $\d$. This allows us to introduce the concept of {\it cross ratio} of four points in $\pa$, in the sense considered by Zelikin \cite{zelikin}, adapted to our situation. Geometrically, the construction of the cross ratio is illustrated by the following figure:
	\begin{center}
	\includegraphics[width=0.5\textwidth]{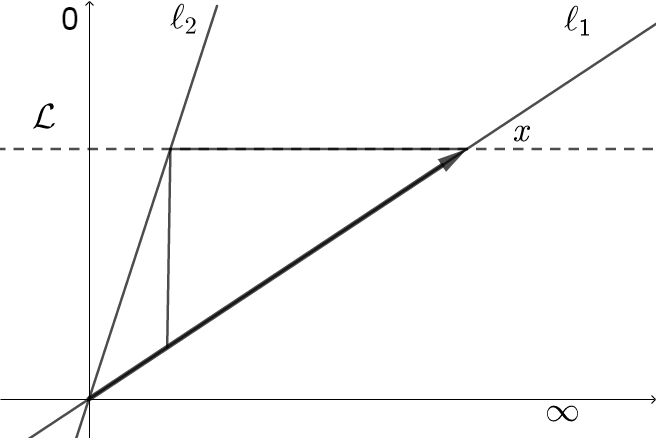}
	\end{center}
\centerline{\bf Figure 1.}
\bigskip

Here the plane represents $\a^2$ and the lines are elements of $\pa$, i.e. complemented submodules of $\a$ generated by a single element. The cross ratio of $\infty$, $0$, $\ell_1$ and $\ell_2$  (in this precise order) has the following geometric meaning: in the projective line $\pa$ we choose $\infty$ as the point of infinity, then $\pa\setminus\{\infty\}$ turns into an {\it affine line} (${\cal L}$ in the figure above). In this affine line, we choose a point $0$, and the affine line turns into a {\it vector line}. In this vector line we choose a point $\ell_1$, and the vector line turns into a {\it scalar  line} (identified with $\a$ by means of the basis $\ell_1$). Finally, if in this scalar line we choose a fourth element $\ell_2\ne 0$, this point has a coordinate in the basis $\ell_1$. Classically, this scalar is the cross ratio of $\infty, 0, \ell_1, \ell_2$. These features can be observed in Figure 1, in the line ${\cal L}$. In this paper, the cross ratio of these four lines will be an endomorphism $\varphi$  of $\ell_1$. The map $\varphi$ associates to each $\xx\in\ell_1$ the point $\varphi(\xx)$, obtained by two succesive projections: first to $0$, parallel to $\infty$; next to $\ell_1$, parallel to $\ell_2$. This is clear in the figure above.

We introduce a fibre bundle $\Gamma$ whose fibers are C$^*$-algebras (isomorphic to $\a$), over the base space $\d$. In  the tangent bundle $T\d$, we define  an inner product with values in $\Gamma$ (similarly as a Riemannian structure). If $z\in\d$ and $X,Y\in(T\d)_z$, we denote this inner product by $\langle X,Y\rangle_z$. It is an element in $\Gamma_z$ (the fiber of $\Gamma$ over $z$). Then, by definition, the cross ratio of four points in $\d$ is an element of $\Gamma$.

On the other hand, we compute explicitly the exponential map of the natural connection over $T\d$, which is bijective at every tangent space, and its inverse
$\Log_z:\d\to (T\d)_z$. We consider the morphism $\langle \Log_z(w), \Log_z(w)\rangle_z^{1/2}$ as a {\it distance operator} from $z$ to $w$ in $\d$.

In Theorem \ref{el teo} and Corollary \ref{el coro} we state  our main result, which clarifies the relationship between $\langle \Log_z(w), \Log_z(w)\rangle_z^{1/2}$ and the cross ratio  of $-\infty, z, w, \infty$. Here $\pm\infty$ denote the limits
$$
{\rm SOT}-\lim_{t\to\pm\infty}  \gamma(t),
$$
where $\gamma$ is the geodesic from $z$ to $w$ in time $1$, and the algebra $\a$ is considered in its universal representation.  The formulas in Theorem \ref{el teo} and Corollary \ref{el coro}  enable one to view the differential geometry of the Poincar\'e disk $\d$ as projective geometry of spaces of operators.

The contents of the paper are the following. In Section 2 we introduce the {\it regular} elements of $\a^2$, which will be used to define $\pa$ and $\pa^\theta$. 

In Section 3 we define the projective $\pa$  line of $\a$, and introduce the operator cross ratio  of four submodules (following ideas introduced in  \cite{zelikin} for closed subpaces): it is defined as a (possibly empty) set of module homomorphisms. 

In Section 4 we introduce our object of study, the $\theta$-hyperbolic part $\pa^\theta$ of $\pa$. We state the identifications of $\pa^\theta$ with the disk $\d$ and with the space of projections $\q_\rho$ ($\theta$-orthogonal projections whose ranges are the elements of $\pa^\theta$). We introduce the action of the $\theta$-unitary group $\u(\theta)$, as well as the Borel subgroup $\b(\theta)$.  

In Section 5 we describe the tangent bundle of $\pa^\theta$. 

In Section 6 we recall the basic facts of the geometry of $\d$ done in \cite{tejemas}. Also we compute the explicit form of the exponential map and its inverse $\Log$. 

In Section 7 we compute the limit points of geodesics at $t=\pm\infty$. In particular, we show that the partial isometry of $z\in\d$ coincides with the ${\rm SOT}-\lim_{t\to\infty} \delta(t)$, where $\delta$ is the geodesic joining $0$ and $z$.  

In Section 8 we introduce the $\u(\theta)$-invariant Finsler metric in $\pa^\theta$. 

In Section 9 we consider the cross ratio of the points $\ell_{-\infty}, \ell_0, \ell, \ell_{+\infty}$, where $\ell_0$ is the submodule corresponding to the origin in $\d$, $\ell$ is an arbitrary point in $\ell\ne\ell_0$, and $\ell_{\mp\infty}$ are the $\mp\infty$- limit points of the geodesic starting at $\ell_0$ at $t=0$ and reaching $\ell$ at $t=1$. We show that this set is always non empty, and that there is a natural choice of a specific homomorphism $cr(\ell_0,\ell)$, or $cr(0,z)$ if $\ell\simeq z\in\d$ (which is the only possible choice for a strongly dense subset of $\ell\in\pa^\theta$, in the case when $\a$ is a von Neumann algebra). We finish Section 9 by giving a first glimpse of our main result, which relates $cr(0,z)$ with the metric geometry of $\d$ (or equivalently, $\pa^\theta$), namely, 
$$
\|cr(0,z)\|_{\b(\ell_z)}=\frac12  d(0,z).
$$
where $\|cr(0,z)\|_{\b(\ell_z)}$ stands for the norm of the endomorphism $cr(0,z):\ell_z\to\ell_z$.  

In Section 10 we briefly introduce the coefficient bundle over $\pa^\theta$ (using the model $\q_\rho$), and an Hermitian structure on $\pa^\theta$, with values in the coefficient bundle. This is done in order to state our main results of this paper, which are the formulas given in Theorem  \ref{el teo} and Corollary \ref{el coro}, relating the cross ratio $cr(\ell_1,\ell_2)$  with the logarithm map $\Log_{\ell_1}\ell_2$. 

In Section 11 we consider a relevant example, namely, when there exists a C$^*$-subalgebra $\b$ of the center of $\a$ and a  conditional expectation $\tr:\a\to\b$ satisfying $\tr(xy)=\tr(yx)$ for all $x,y\in\a$; an important particular case is when $\a$ is commutative. In this case the computations are much simpler, and the   formula relating the cross ratio and the logarithm is
$$ 
e^{|\Log_{\ell_1} \ell_2|}=cr(\ell_1,\ell_2).
$$
\section{Preliminaries}
Let $\a$ be a unital C$^*$-algebra, $G$ the group of invertible elements of $\a$, $G^+$ the subset of $G$ of positive elements.  
Consider $\a^2=\a\times\a$ as a right $\a$-C$^*$-module, with $\a$-valued inner product given by
$$
\langle \xx,\yy \rangle= x_1^*y_1+x_2^*y_2,
$$
where $\xx=\vecx, \ \yy=\vecy\in\a^2$. $\a^2$ is endowed with the usual C$^*$-module norm
$$
\|\xx\|=\|<\xx,\xx>\|^{1/2}=\|x_1^*x_1+x_2^*x_2\|^{1/2}.
$$
 We shall say that an element $\xx=\vecx\in\a^2$ is {\it regular } if
$$
\langle \xx,  \xx \rangle\in G^+.
$$
We shall denote by $\a^2_{reg}$ the set of regular elements. Clearly $\a^2_{reg}$ is an open subset of $\a^2$.
The C$^*$-algebra of adjointable operators of $\a^2$ (bounded $\a$-linear operators acting in $\a^2$ which admit an adjoint for the $\a$-valued inner product) identifies with the algebra $M_2(\a)$ of $2\times 2$ matrices with entries in $\a$, acting by left multiplication.
Let $Gl_2(\a)$ be  the group of invertible elements of $M_2(\a)$.
Let us state the following basic facts:
\begin{prop}
With the current notations, we have that
\begin{enumerate}
\item
$\a^2_{reg}$ is an open subset of $\a$.
\item
$Gl_2(\a)$ acts on $\a^2_{reg}$ by left multiplication.
\item
$G$ acts on $\a^2_{reg}$ by right multiplication.
\item
Both actions commute.
\item
For each $\xx\in\a^2_{reg}$ the orbit $Gl_2(\a)\cdot \xx$ is open (and closed) in $\a^2_{reg}$.
\item
For each $\xx\in\a^2_{reg}$, the map $Gl_2(\a)\to GL_2(\a)\cdot \xx\subset \a^2_{reg}$, $\tilde{g} \mapsto \tilde{g}\cdot \xx$ defined a principal fibre bundle with smooth local cross sections.
\end{enumerate}
\end{prop}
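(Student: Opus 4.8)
The plan is to dispatch items (1)--(4) as direct computations and to concentrate the real work on (5) and (6), where the key observation is that regularity of $\xx$ makes the Gram operator $\langle\xx,\xx\rangle$ invertible, which in turn produces an explicit Moore--Penrose-type right inverse.

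For (1), the map $\xx\mapsto\langle\xx,\xx\rangle$ is continuous from $\a^2$ to $\a$ and $G^+$ is open in $\a$ (the positive invertibles form an open set), so $\a^2_{reg}=\{\xx:\langle\xx,\xx\rangle\in G^+\}$ is open. For (2) I would use that $M_2(\a)$ is the algebra of adjointable operators, so $\langle\tilde g\xx,\tilde g\xx\rangle=\langle\xx,\tilde g^*\tilde g\xx\rangle$; since $\tilde g^*\tilde g\ge m\,I$ for some $m>0$, this is $\ge m\langle\xx,\xx\rangle$, and as $\langle\xx,\xx\rangle$ is bounded below by a positive scalar the same holds for $\langle\tilde g\xx,\tilde g\xx\rangle$, so $\tilde g\xx$ is again regular; associativity of the matrix product gives the action axioms. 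For (3), $\langle\xx g,\xx g\rangle=g^*\langle\xx,\xx\rangle g\in G^+$ by congruence, and the action axioms are again immediate. Item (4) is the bimodule identity $(\tilde g\xx)g=\tilde g(\xx g)$, which holds entrywise by associativity in $\a$.

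For (5) the approach is to show that each orbit map $\pi_\xx:Gl_2(\a)\to\a^2_{reg}$, $\tilde g\mapsto\tilde g\xx$, is a submersion; openness of the orbits follows, and closedness is then automatic, since the distinct orbits are disjoint open sets covering $\a^2_{reg}$. As $\pi_\xx$ is the restriction of a linear map, its differential at every point is the constant map $Z\mapsto Z\xx$ on $M_2(\a)$. Regarding $\xx$ as a column operator $\xx:\a\to\a^2$ with adjoint $\xx^*:\a^2\to\a$, regularity says precisely that $\xx^*\xx=\langle\xx,\xx\rangle\in G$, and I claim $Z\mapsto Z\xx$ is then a split surjection onto $\a^2=T_\xx\a^2_{reg}$: the bounded linear map $\yy\mapsto\yy(\xx^*\xx)^{-1}\xx^*$ is a right inverse, because $\big(\yy(\xx^*\xx)^{-1}\xx^*\big)\xx=\yy$. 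Its range complements the kernel, so the differential admits a bounded splitting and $\pi_\xx$ is a submersion, whence the orbit is open (and closed).

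For (6), the same right inverse furnishes an explicit smooth local cross section. Setting $s(\yy)=I+(\yy-\xx)(\xx^*\xx)^{-1}\xx^*\in M_2(\a)$ one checks $s(\yy)\xx=\yy$, and since $s(\xx)=I$ and $Gl_2(\a)$ is open in $M_2(\a)$, the map $s$ takes values in $Gl_2(\a)$ on a neighborhood of $\xx$ in the orbit; conjugating by a group element, $\yy\mapsto\tilde g_0\,s(\tilde g_0^{-1}\yy)$ yields a section near any point $\tilde g_0\xx$. With these sections the local trivialization $\tilde g\mapsto\big(\pi_\xx(\tilde g),\,s(\pi_\xx(\tilde g))^{-1}\tilde g\big)$ exhibits $\pi_\xx$ as a principal bundle over the orbit, with structure group the stabilizer $\{\tilde g:\tilde g\xx=\xx\}$. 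The only genuinely delicate point throughout is the construction of this bounded right inverse and the attendant section; it rests entirely on the invertibility of $\langle\xx,\xx\rangle$ guaranteed by regularity, and once that is in hand everything else is formal.
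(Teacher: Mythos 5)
Your proof is correct, but note that the paper does not actually prove this proposition: it omits the proof and refers to Corach--Larotonda \cite{cl} (Section 1), after observing that $\a^n_{reg}$ coincides with the set of unimodular rows, i.e.\ that $\langle\xx,\xx\rangle\in G^+$ iff there is $\yy$ with $y_1x_1+\dots+y_nx_n=1$ (one direction via $y_k=\langle\xx,\xx\rangle^{-1}x_k^*$, the other via the module Schwarz inequality). Your argument is essentially a self-contained version of that same route: your Moore--Penrose right inverse $\yy\mapsto\yy(\xx^*\xx)^{-1}\xx^*$ and the affine section $s(\yy)=I+(\yy-\xx)(\xx^*\xx)^{-1}\xx^*$ are built precisely from the unimodularity witness $\langle\xx,\xx\rangle^{-1}\xx^*$ that the paper's remark supplies, so the two approaches coincide in substance; what your write-up buys is that items (5) and (6) are proved directly, with explicit splittings, sections and trivializations, rather than delegated to the reference. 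Two small repairs are worth making. In (1), $G^+$ is not open in $\a$ (it is open only in the selfadjoint part $\a_{sa}$, since a small non-selfadjoint perturbation of a positive invertible leaves $G^+$); but $\langle\xx,\xx\rangle\ge 0$ automatically, so either write $\a^2_{reg}=\{\xx:\langle\xx,\xx\rangle\in G\}$ and use openness of $G$ in $\a$, or view the Gram map as continuous into $\a_{sa}$. In (6), to justify calling the quotient a principal bundle with structure group the stabilizer $I_\xx=\{\tilde{g}:\tilde{g}\xx=\xx\}$, you should record that the bounded idempotent $Z\mapsto Z\xx(\xx^*\xx)^{-1}\xx^*$ splits $M_2(\a)$ as $\{Z:Z\xx=0\}$ plus its range, so that $I_\xx$ is a Banach--Lie subgroup whose Lie algebra is complemented; this is implicit in the splitting you construct for (5), but it is the point that makes the fiber a bona fide Lie group in the Banach setting.
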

We shall omit the  proof:  the proposition   holds  for  $\a^n$ instead of $\a^2$. We refer to \cite{cl} (Section 1) for complete proofs of these facts, noticing that 
$$
\a^n_{reg}=\{\xx\in\a^n: \hbox{ there exists } \yy\in\a^n \hbox{ such that } y_1x_1+\dots + y_nx_n=1\}.
$$
Indeed, clearly if $\xx\in\a^n_{reg}$, then putting $\yy$ given by $y_k=<\xx,\xx>^{-1}x_k^*$ provides an $n$-tuple such that $y_1x_1+\dots + y_nx_n=1$. Conversely, if $y_1x_1+\dots + y_nx_n=1$, and we denote by $\yy^*=(y_1^*,\dots,y_n^*)$, by the Schwarz inequality for C$^*$-modules (see for instance \cite{mt}, Prop. 1.2.4), we have
$$
1=<\yy^*,\xx>=|<\yy^*,\xx>|^2\le \|<\yy^*,\yy^*>\|<\xx,\xx>,
$$
which implies that $<\xx,\xx>$ is invertible.

This set $\a^n_{reg}$ is known, in the $K$-theoretic setting, as the set of $n$-{\it unimodular rows} in $\a$ . The paper by M. Rieffel \cite{rieffel} contains a thorough treatment of this subject.

We are interested in the single orbit 
$$
\oo:=Gl_2(\a)\cdot {\bf e_1},
$$
where ${\bf e_1}=\left( \begin{array}{c} 1 \\ 0 \end{array} \right)$.

Here and throughout, denote  $\hat{\tilde{g}}:=(\tilde{g}^*)^{-1}$.  

We consider the sphere
$$
\ss=\ss(\a^2)=\{\xx\in\a^2: \langle\xx,\xx\rangle=1\},
$$
and the intersection
$$
\ss_\oo=\ss\cap\oo.
$$

The next result  will not be needed in the rest of the paper, we state  it here to describe the topology of   the sphere $\ss$ in the case when $\a=\b(\h)$. 

\begin{teo}
If $\a=\b(\h)$, for an infinite dimensional Hilbert space $\h$, then:
\begin{enumerate}
\item
Each $\xx\in\ss$ defines an isometry $\xx:\h\to \h^2$ such that $R(\xx)^\perp$ is infinite dimensional.
\item
$\ss$ is connected.
\end{enumerate} 
\end{teo}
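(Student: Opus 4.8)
The plan is to read $\ss$ through the column picture of $\a^2$ and then to exploit the orbit structure of the Proposition. Writing $\xx=\vecx$ as the operator $\xx\colon\h\to\h^2$, with $\h^2=\h\oplus\h$, given by $\xi\mapsto(x_1\xi,x_2\xi)$, its Hilbert space adjoint is the row $(x_1^*,x_2^*)\colon\h^2\to\h$, so that $\xx^*\xx=x_1^*x_1+x_2^*x_2=\langle\xx,\xx\rangle$. Hence $\xx\in\ss$ iff $\xx^*\xx=1$, i.e. iff $\xx$ is an isometry of $\h$ into $\h^2$; this already settles the first assertion of part (1). Moreover $\xx\xx^*\in M_2(\a)=\b(\h^2)$ is then the orthogonal projection onto the closed subspace $R(\xx)$, and $R(\xx)^\perp$ is the range of $1-\xx\xx^*$, so the dimension to be computed is the rank of this projection.

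First I would place $\ss$ inside $\a^2_{reg}$: since $\langle\xx,\xx\rangle=1\in G^+$ for every $\xx\in\ss$ we have $\ss\subset\a^2_{reg}$, and ${\bf e_1}\in\ss\cap\oo=\ss_\oo$. By the Proposition, $\a^2_{reg}$ is partitioned into the $Gl_2(\a)$-orbits, each open and closed. The central step is to show that $\ss$ is contained in the single orbit $\oo=Gl_2(\a)\cdot{\bf e_1}$, so that $\ss=\ss_\oo$. Once this is available, every $\xx\in\ss$ is of the form $\xx=\tilde g\,{\bf e_1}$ with $\tilde g\in Gl_2(\a)$; then $R(\xx)=\tilde g(\h\oplus 0)$ has the closed direct complement $\tilde g(0\oplus\h)\cong\h$ in $\h^2$, whence $R(\xx)^\perp\cong\h^2/R(\xx)\cong\h$ is infinite dimensional, finishing part (1).

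To organise the orbit and the connectedness I would first treat the isometries whose complement is as large as possible. If $R(\xx)^\perp\cong\h$, then $\xx$ extends to a unitary $U\in\b(\h^2)$ with $U{\bf e_1}=\xx$: send $\h\oplus 0$ onto $R(\xx)$ by $\xx$ and $0\oplus\h$ onto $R(\xx)^\perp$ by any unitary between these two infinite dimensional spaces. Hence the set of such $\xx$ is exactly the orbit $U(\h^2)\cdot{\bf e_1}=\ss_\oo$ of ${\bf e_1}$ under the unitary group $U(\h^2)\subset Gl_2(\a)$, and this orbit is connected: $U(\h^2)$ is path connected, since by the spectral theorem every $U=e^{iA}$ with $A=A^*$ and $t\mapsto e^{itA}$ joins $1$ to $U$, while $U\mapsto U{\bf e_1}$ is norm continuous. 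Connectedness of $\ss_\oo$ may also be read off from the contractibility of $Gl_2(\a)$ (Kuiper) transported through the principal bundle of the Proposition, or from the fibration of $\ss_\oo$ over the contractible disk $\d$.

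The main obstacle is the reduction $\ss=\ss_\oo$, equivalently the constancy of $\xx\mapsto\dim R(\xx)^\perp$ on $\ss$. The tool I would use is that on $\a^2_{reg}$ the projection $1-\xx\xx^*$ onto $R(\xx)^\perp$ depends norm continuously on $\xx$: the bound $\xx^*\xx\ge\epsilon>0$ opens a spectral gap of $\xx\xx^*$ above $0$, so the spectral projection onto $\{0\}$ varies continuously, and norm close projections are unitarily equivalent and hence of equal rank. This local constancy of $\xx\mapsto\dim R(\xx)^\perp$ is precisely what renders the $Gl_2(\a)$-orbits open and closed in $\a^2_{reg}$. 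The plan is then to promote this to global constancy on $\ss$ by a connectedness argument anchored at ${\bf e_1}\in\ss$, which would identify $\ss$ with the single orbit $\ss_\oo$ and thereby close both parts; concentrating the difficulty at this clopen-orbit reduction is, I expect, where the real work lies.
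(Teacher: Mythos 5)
Your proposal has a genuine gap, and you located it yourself: the reduction $\ss=\ss_\oo$ is announced but never carried out, and the mechanism you propose for it is circular. Local constancy of $\xx\mapsto\dim R(\xx)^\perp$ on $\a^2_{reg}$ yields global constancy on $\ss$ only if $\ss$ is already known to be connected; but connectedness of $\ss$ is item (2) of the theorem, and your argument for it (extend $\xx$ to a unitary of $\h^2$, then use path connectedness of $\u_2(\a)$) applies only to those $\xx$ whose co-rank is already known to be infinite --- that is, only to $\ss_\oo$. So in your scheme part (1) presupposes part (2) and vice versa, and nothing closes the loop. (There is also a minor cardinality caveat: for nonseparable $\h$ your unitary extension needs $\dim R(\xx)^\perp=\dim\h$, not merely ``infinite''; but this is a side issue next to the main one.)

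Worse, the missing step is not merely unproven: it is false, so no repair along these lines is possible. Every bounded operator $\h\to\h^2$ has the form $\xi\mapsto(x_1\xi,x_2\xi)$ with $x_i\in\b(\h)$, so $\ss$ is exactly the set of \emph{all} isometries $\h\to\h^2$. In particular, any unitary $U:\h\to\h^2$ (which exists because $\dim\h^2=\dim\h$) gives $\xx=(P_1U,P_2U)\in\ss$ with $R(\xx)^\perp=\{0\}$, and composing $U$ with shifts of $\h$ realizes every co-rank $k\in\{0,1,\dots,\infty\}$ inside $\ss$. Since, as you correctly observe, co-rank is locally constant in norm, $\ss$ is in fact disconnected and $\ss_\oo\subsetneq\ss$: both assertions of the theorem fail as stated. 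What is true --- and what your unitary-orbit argument actually proves, in accordance with the paper's earlier proposition that $\ss_\oo=\u_2(\a)\cdot{\bf e_1}$ --- is the theorem with $\ss$ replaced by $\ss_\oo$. For comparison, the paper does not attempt your clopen-orbit reduction: it argues part (1) directly from $R(\xx)^\perp=\{(\xi,\eta):x_1^*\xi=-x_2^*\eta\}$ by cases on $\dim N(x_i^*)$, but in the case where both kernels are finite dimensional it mis-solves the constraint: the condition $x_1^*x_1\varphi=-x_2^*x_2\psi$ is rewritten there as $\varphi=x_1^*x_1(\varphi-\psi)$, which is actually equivalent to the \emph{swapped} equation $x_2^*x_2\varphi=-x_1^*x_1\psi$. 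Solving the original constraint correctly gives the family $(-x_1x_2^*x_2\nu,\ x_2x_1^*x_1\nu)$, $\nu\in\h$, which vanishes identically in the unitary example above, so the claimed infinite-dimensional subspace of $R(\xx)^\perp$ does not exist; part (2) of the paper's proof then imports from \cite{arv} the classification of components of isometries by co-rank, which, given the co-rank-zero example, yields disconnectedness rather than connectedness. So your instinct that the ``real work'' lies at the reduction $\ss=\ss_\oo$ was exactly right: that is the point at which both your proposal and the paper's own proof break down, and the statement should be read as a result about $\ss_\oo$ (which is all that the rest of the paper uses, the authors noting the theorem is not needed later).
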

\begin{proof}
$$
\|\xx\xi\|^2=\|x_1\xi\|^2+\|x_2\xi\|^2=<x_1^*x_1\xi, \xi>+<x_2^*x_2\xi,\xi>=<\xi,\xi>=\|\xi\|^2.
$$
Let us check that $R(\xx)^\perp$ is infinite dimensional. Note that
$$
R(\xx)^\perp=\{(\xi,\eta)\in\h\times\h: x_1^*\xi=-x_2^*\eta\}.
$$
If either $N(x_1^*)$ or $N(x_2^*)$ is infinite dimensional, it is clear that $R(\xx)^\perp$ is infinite dimensional: suppose 
that $\dim N(x_1^*)=+\infty$; then all pairs of the form $(\xi, 0)$, with $\xi\in N(x_1^*)$, belong to $R(\xx)^\perp$.

Suppose then that both $N(x_1^*), N(x_2^*)$ are finite dimensional. Clearly 
$$
\{(x_1\varphi, x_2\psi): x_1^*x_1\varphi=-x_2^*x_2\psi\}\subset R(\xx)^\perp.
$$
Since $x_2^*x_2=1-x_1^*x_1$, the condition $x_1x_1^*\varphi=-x_2^*x_2\psi$ is equivalent to $\varphi=x_1^*x_1(\varphi-\psi)$. If we denote $\nu=\varphi-\psi$, then the left-hand set above can be written
$$
\{(x_1\varphi, x_2(\varphi+\nu)): \varphi=x_1^*x_1 \nu\}=\{(x_1x_1^*x_1\nu, x_2(x_1^*x_1\nu+\nu)): \nu\in\h\}.
$$
Note that the $R(x_1x_1^*x_1)$ is infinite dimensional: its orthogonal complement $R(x_1x_1^*x_1)^\perp$ satisfies
$$
R(x_1x_1^*x_1)^\perp=N(x_1^*x_1x_1^*)\subset N((x_1x_1^*)^2)=N(x_1x_1^*)=N(x_1^*),
$$
and thus is finite dimensional. Therefore $\xx$ is an isometry with infinite co-rank.

Let us check now that the sphere $\ss$ is connected. 
Consider a fixed unitary operator $U:\h\times\h\to\h$. Let $\xx, \yy\in\ss$, again, be regarded as isometries from $\h$ to $\h^2$. Then  the operators $U\xx$ and $U\yy$ are isometries in $\h$, with infinite co-rank. In \cite{arv} it was shown that the connected components of the set of isometries in $\h$ is parametrized by the co-rank: two isometries belong to the same component if and only if they have the same co-rank. Moreover, also in \cite{arv}, it was shown that these components are the orbits of the left action of the unitary group of $\h$, by left multiplication. It follows that there exists a selfadjoint operator $Z\in\b(\h)$ such that 
$$
U\yy=e^{iZ}U\xx.
$$
Then $\xx(t)=U^*e^{itZ}U\xx$ is a continuous  curve of isometries from $\h$ to $\h\times\h$ such that $\xx(0)=\xx$ and $\xx(1)=\yy$. Then
$$
\xx(t)=\left(\begin{array}{c} x_1(t) \\ x_2(t) \end{array} \right) \in\ss
$$
is a continuous curve in $\ss$ which connects $\xx$ and $\yy$.
\end{proof}

Let us denote by $\u_2(\a)$ the unitary group of $M_2(\a)$. Since $M_2(\a)$ is the C$^*$-algebra of adjointable operators of the module $\a^2$, $\u_2(\a)$ is the group of invertible elements in $M_2(\a)$ which preserve the $\a$-valued inner product of $\a^2$. In particular, this implies that $\u_2(\a)$ acts on $\ss$, and on $\ss_\oo$.

\begin{prop}
$\ss_\oo$ is the orbit of ${\bf e_1}$  under the action of $\u_2(\a)$.
\end{prop}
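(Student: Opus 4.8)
The plan is to prove the two inclusions $\u_2(\a)\cdot{\bf e_1}\subseteq\ss_\oo$ and $\ss_\oo\subseteq\u_2(\a)\cdot{\bf e_1}$ separately, the real content lying in the second one: membership in the orbit $\oo$ is exactly what is needed to promote a module isometry to a module unitary.

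For the inclusion $\u_2(\a)\cdot{\bf e_1}\subseteq\ss_\oo$ I would argue directly. First, ${\bf e_1}\in\ss_\oo$, since $\langle{\bf e_1},{\bf e_1}\rangle=1$ and ${\bf e_1}=I\cdot{\bf e_1}\in\oo$. Now take $u\in\u_2(\a)$. Because $u$ preserves the $\a$-valued inner product, $\langle u{\bf e_1},u{\bf e_1}\rangle=\langle{\bf e_1},{\bf e_1}\rangle=1$, so $u{\bf e_1}\in\ss$; and since $u\in\u_2(\a)\subseteq Gl_2(\a)$, we get $u{\bf e_1}\in Gl_2(\a)\cdot{\bf e_1}=\oo$. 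Hence $u{\bf e_1}\in\ss\cap\oo=\ss_\oo$.

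The substantial inclusion $\ss_\oo\subseteq\u_2(\a)\cdot{\bf e_1}$ I would establish by a Gram--Schmidt procedure. Fix $\xx\in\ss_\oo$. Since $\xx\in\oo$ there is $\tilde g\in Gl_2(\a)$ with $\xx=\tilde g{\bf e_1}$; writing $\zz=\tilde g{\bf e_2}$, this says $\tilde g=[\,\xx\mid\zz\,]$ has $\xx$ as its first column. Since $\xx\in\ss$, we have $\langle\xx,\xx\rangle=1$, i.e. $\xx$ is already a module isometry. I would orthogonalize $\zz$ against $\xx$ by setting $\zz'=\zz-\xx\langle\xx,\zz\rangle$; using $\langle\xx,\xx\rangle=1$ one gets $\langle\xx,\zz'\rangle=0$, and since
$$
[\,\xx\mid\zz'\,]=\tilde g\begin{pmatrix} 1 & -\langle\xx,\zz\rangle \\ 0 & 1\end{pmatrix},
$$
the matrix $[\,\xx\mid\zz'\,]$ remains invertible, being a product of invertibles. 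Normalizing, the identity
$$
[\,\xx\mid\zz'\,]^*[\,\xx\mid\zz'\,]=\begin{pmatrix}\langle\xx,\xx\rangle & \langle\xx,\zz'\rangle \\ \langle\zz',\xx\rangle & \langle\zz',\zz'\rangle\end{pmatrix}=\begin{pmatrix} 1 & 0 \\ 0 & d\end{pmatrix},\qquad d:=\langle\zz',\zz'\rangle,
$$
together with the invertibility of $[\,\xx\mid\zz'\,]$, shows that this block-diagonal positive element is invertible, hence $d\in G^+$. Setting $\yy=\zz'd^{-1/2}$ I obtain $\langle\yy,\yy\rangle=d^{-1/2}d\,d^{-1/2}=1$ and $\langle\xx,\yy\rangle=\langle\xx,\zz'\rangle d^{-1/2}=0$, so that $u:=[\,\xx\mid\yy\,]=[\,\xx\mid\zz'\,]\,\mathrm{diag}(1,d^{-1/2})$ satisfies $u^*u=I$ and $u{\bf e_1}=\xx$.

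The main obstacle, and the crux of the argument, is exactly the final step: the relation $u^*u=I$ only says that $u$ is an isometry of the module $\a^2$, and in a general C$^*$-algebra isometries need not be unitaries. What saves the day is that $u$ is the product of the invertible matrix $[\,\xx\mid\zz'\,]$ and the invertible $\mathrm{diag}(1,d^{-1/2})$, hence itself invertible; an invertible $u$ with $u^*u=I$ satisfies $u^*=u^*(uu^{-1})=(u^*u)u^{-1}=u^{-1}$, whence $uu^*=I$ and $u\in\u_2(\a)$. It is precisely the hypothesis $\xx\in\oo$ --- that $\xx$ completes to an invertible matrix --- which persists through the orthonormalization and forces $u$ to be invertible, thereby upgrading the isometry to a genuine unitary; without the orbit condition the inclusion would fail.
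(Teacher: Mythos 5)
Your proof is correct, and it takes a genuinely different route from the paper's. The paper keeps the given factorization $\xx=\tilde{g}{\bf e_1}$ and extracts the unitary from the polar decomposition $\tilde{g}=\tilde{u}\tilde{r}^{1/2}$, $\tilde{r}=\tilde{g}^*\tilde{g}$: from $\langle\xx,\xx\rangle=1$ it asserts the relation $\tilde{r}\tilde{p}_0=\tilde{p}_0$ (with $\tilde{p}_0=\mathrm{diag}(1,0)$), propagates it through polynomials and a functional-calculus limit to get $\tilde{r}^{1/2}\tilde{p}_0=\tilde{p}_0$, and concludes that the polar unitary already satisfies $\tilde{u}{\bf e_1}=\tilde{g}{\bf e_1}=\xx$. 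You instead orthonormalize the columns of $\tilde{g}$: a unipotent column operation replaces the second column by $\zz'=\zz-\xx\langle\xx,\zz\rangle$, invertibility of $[\,\xx\mid\zz'\,]$ forces $d=\langle\zz',\zz'\rangle\in G^+$, and after normalizing you obtain an invertible isometry, hence a unitary, with first column $\xx$. Your route is more elementary (no polynomial approximation, only the positive square root of an element of $G^+$), it produces the unitary explicitly, and it isolates exactly where $\xx\in\oo$ enters: invertibility is what upgrades $u^*u=1$ to unitarity. The paper's approach, when it applies, is shorter and identifies the unitary intrinsically as the polar part of $\tilde{g}$.

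There is a further point in your favor: your Gram--Schmidt step repairs a genuine flaw in the paper's argument as written. The condition $\langle\xx,\xx\rangle=1$ says only that the $(1,1)$ entry of $\tilde{r}$ equals $1$; this is \emph{not} equivalent to $\tilde{r}\tilde{p}_0=\tilde{p}_0$, which also requires the $(2,1)$ entry of $\tilde{r}$ to vanish. Already for $\a=\CC$, taking $\tilde{r}=\begin{pmatrix}1&1\\1&2\end{pmatrix}$ and $\tilde{g}=\tilde{r}^{1/2}$ gives $\xx=\tilde{g}{\bf e_1}\in\ss_\oo$ with $\tilde{r}\tilde{p}_0\ne\tilde{p}_0$; moreover the polar unitary of this $\tilde{g}$ is the identity, which does not move ${\bf e_1}$ to $\xx$. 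The paper's functional-calculus argument becomes valid precisely after replacing $\tilde{g}$ by your matrix $[\,\xx\mid\zz'\,]$, whose Gram matrix $\mathrm{diag}(1,d)$ is diagonal. So your proof is not merely an alternative: for an arbitrary choice of $\tilde{g}$ representing $\xx$ it is the more complete argument, and the paper's proof should be read as implicitly preceded by your unipotent correction.
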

\begin{proof}
Clearly the orbit $\u_2(\a)\cdot{\bf e_1}$ is contained in  $\ss_\oo$. Pick $\xx=\tilde{g}{\bf e_1} \in\ss_\oo$. We must show that there exists a unitary element $\tilde{u}$ such that $\tilde{u}{\bf e_1}=\tilde{g}{\bf e_1}$.
Denote  $\tilde{r}=\tilde{g}^*\tilde{g}$ and by $\tilde{p}_0$ the selfadjoint projection $\left( \begin{array}{cc} 1 & 0 \\ 0 & 0 \end{array} \right)$.  The fact that $\tilde{g}{\bf e_1}\in\ss_\oo$ means that
$$
\langle \tilde{r} {\bf e_1}, {\bf e_1}\rangle =1,
$$
or, equivalently, that $\tilde{r}\tilde{p}_0=\tilde{p}_0$. It follows that, for any $n\ge 0$, $\tilde{r}^n\tilde{p}_0=\tilde{p}_0$, and thus, if $q(t)=\sum_{i=0}^n \alpha_i t^i$, then $q(\tilde{r})\tilde{p}_0=\sum_{i=1}^n \alpha_i \tilde{p}_0=q(1)\tilde{p}_0$. Let $q_n(t)$ be a sequence of polynomials which converge uniformly to $\sqrt{t}$ in the spectrum of $\tilde{r}$ (a compact subset of $(0,+\infty)$). Then $q_n(\tilde{r})\to \tilde{r}^{1/2}$. On the other hand, it holds
$$
q_n(\tilde{r})\tilde{p}_0=q_n(1)\tilde{p}_0\to \tilde{p}_0.
$$
Then, $\tilde{r}^{1/2}\tilde{p}_0=\tilde{p}_0$. Consider the polar decomposition of the (invertible) element $\tilde{g}$ in $M_2(\a)$: $\tilde{g}=\tilde{u} \tilde{r}^{1/2}$. Then $\tilde{u}$ is a unitary element such that
$\tilde{u}\tilde{p}_0=\tilde{u}\tilde{r}\tilde{p}_0=\tilde{g}\tilde{p}_0$. In particular,
$$
\tilde{u}{\bf e_1}=\hbox{ first column of } \tilde{u}\tilde{p}=\hbox{ first column of } \tilde{g}\tilde{p}=\tilde{g}{\bf e_1}.
$$ 
\end{proof}
\begin{coro}
The unitary group $\u$ of $\a$ acts on $\ss_\oo$ by right multiplication.
\end{coro}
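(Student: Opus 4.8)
The plan is to show that the right action of the scalar unitary group $\u$ on $\a^2$, which is the restriction of the right $G$-action of item 3 of the first Proposition, preserves $\ss_\oo$; once this invariance is established, the group-action axioms $\xx\cdot(uv)=(\xx\cdot u)\cdot v$ and $\xx\cdot 1=\xx$ hold automatically by associativity of the module multiplication. Thus, writing $\xx\cdot u=\left(\begin{array}{c} x_1 u \\ x_2 u\end{array}\right)$ for $u\in\u$ and $\xx\in\ss_\oo$, the whole content reduces to checking that $\xx\cdot u$ again lies in $\ss_\oo=\ss\cap\oo$, i.e. that it belongs both to the sphere and to the orbit $\oo$.

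First I would verify membership in $\ss$. This is a direct computation with the $\a$-valued inner product: since $u^*u=1$,
$$
\langle \xx\cdot u, \xx\cdot u\rangle = u^* x_1^* x_1 u + u^* x_2^* x_2 u = u^*\langle\xx,\xx\rangle u = u^*\,1\,u = 1,
$$
so that $\xx\cdot u\in\ss$.

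Next I would verify membership in $\oo=Gl_2(\a)\cdot{\bf e_1}$. The key observation is that right multiplication by the scalar $u$ on ${\bf e_1}$ can be rewritten as a left matrix multiplication:
$$
{\bf e_1}\cdot u=\left(\begin{array}{c} u \\ 0\end{array}\right)=\left(\begin{array}{cc} u & 0 \\ 0 & 1\end{array}\right){\bf e_1},
$$
and the diagonal matrix $\tilde{h}_u=\left(\begin{array}{cc} u & 0 \\ 0 & 1\end{array}\right)$ is unitary, hence lies in $Gl_2(\a)$ (indeed in $\u_2(\a)$). Therefore, if $\xx=\tilde{g}\,{\bf e_1}$ with $\tilde{g}\in Gl_2(\a)$, then $\xx\cdot u=\tilde{g}\,({\bf e_1}\cdot u)=\tilde{g}\,\tilde{h}_u\,{\bf e_1}\in Gl_2(\a)\cdot{\bf e_1}=\oo$, where the first equality is the commutativity of the two actions (item 4). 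Combining the two steps yields $\xx\cdot u\in\ss_\oo$.

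I do not expect a genuine obstacle here; the only point requiring any care is the orbit-preservation step, and the trick of realizing the right scalar action of $u$ as the left action of the diagonal unitary $\tilde{h}_u$ dispatches it. In fact this observation lets one bypass both verifications simultaneously by invoking the preceding Proposition: since $\ss_\oo=\u_2(\a)\cdot{\bf e_1}$ and $\tilde{h}_u\in\u_2(\a)$, for $\xx=\tilde{v}\,{\bf e_1}$ with $\tilde{v}\in\u_2(\a)$ one gets $\xx\cdot u=\tilde{v}\,\tilde{h}_u\,{\bf e_1}\in\u_2(\a)\cdot{\bf e_1}=\ss_\oo$ directly, which is the cleaner route I would take in the final write-up.
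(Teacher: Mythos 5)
Your proposal is correct and, in its final paragraph, coincides with the paper's own proof: the paper also realizes the right action of $v\in\u$ as left multiplication by a diagonal unitary in $\u_2(\a)$ (it uses $\left(\begin{smallmatrix} v & 0 \\ 0 & v\end{smallmatrix}\right)$ where you use $\left(\begin{smallmatrix} u & 0 \\ 0 & 1\end{smallmatrix}\right)$, an immaterial difference since the second coordinate of ${\bf e_1}$ vanishes) and invokes $\ss_\oo=\u_2(\a)\cdot{\bf e_1}$. Your preliminary two-step verification of membership in $\ss$ and $\oo$ separately is correct but redundant once that observation is in hand, as you yourself note.
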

\begin{proof}
Pick $\xx=\tilde{u}\cdot {\bf e_1}\in \ss_\oo$ ($\tilde{u}\in \u_2(\a)$), and $v\in\u$. Then
$$
\xx\cdot v=\tilde{u}\cdot {\bf e_1} \cdot v=\tilde{u}v \cdot {\bf e_1},
$$
where $\tilde{u}v=\tilde{u}\left( \begin{array}{cc} v & 0 \\ 0 & v \end{array} \right)\in\u_2(\a)$.
\end{proof}

\begin{rem}
There is a retraction $\a^2_{reg}\to \ss$: $\xx\mapsto \xx \langle \xx, \xx \rangle ^{-1/2}$. 
\end{rem}

\begin{coro}\label{conexa}
In the case $\a=\b(\h)$,  $\ss_\oo=\ss$ and $\oo=\a^2_{reg}$.
\end{coro}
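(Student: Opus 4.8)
The inclusion $\ss_\oo\subseteq\ss$ is immediate from the definition $\ss_\oo=\ss\cap\oo$, so the content of the first assertion is the reverse inclusion. The plan is to reduce it to the preceding proposition, which states $\ss_\oo=\u_2(\a)\cdot{\bf e_1}$; since unitaries are invertible we have $\u_2(\a)\subseteq Gl_2(\a)$ and hence $\ss_\oo\subseteq\oo$. Thus it suffices to prove that every $\xx\in\ss$ already lies in the orbit $\u_2(\a)\cdot{\bf e_1}$: once this is shown, $\ss\subseteq\ss_\oo\subseteq\ss$ forces $\ss=\ss_\oo$. Concretely, given $\xx\in\ss$ I must produce a unitary $\tilde{u}$ of $\h^2$ with $\tilde{u}\,{\bf e_1}=\xx$.

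For this I would run the same argument already used in the connectedness part of the theorem above. Both ${\bf e_1}$ and $\xx$ are isometries $\h\to\h^2$ of infinite co-rank: for $\xx$ this is exactly the first assertion of the theorem, and for ${\bf e_1}$ the cokernel is $\{0\}\times\h$. Fixing a unitary $U\colon\h^2\to\h$, the operators $U\xx$ and $U{\bf e_1}$ are then isometries of $\h$ of infinite co-rank, hence of equal co-rank. By the result of \cite{arv} invoked earlier, two isometries of $\h$ with the same co-rank lie in a single orbit of the left action of the unitary group of $\h$; therefore there is a unitary $w\in\b(\h)$ with $U\xx=w\,U{\bf e_1}$. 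Setting $\tilde{u}:=U^*wU$ yields a unitary of $\h^2$, i.e. an element of $\u_2(\a)$, satisfying $\tilde{u}\,{\bf e_1}=\xx$. This places $\xx$ in $\ss_\oo$ and completes the proof that $\ss=\ss_\oo$. The only delicate point — the main obstacle — is the matching of co-ranks, which is supplied precisely by the infinite co-rank statement of the theorem together with the classification of \cite{arv}.

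It remains to deduce $\oo=\a^2_{reg}$; as the orbit $\oo$ is always contained in $\a^2_{reg}$, I only need the reverse inclusion. Here I would combine the retraction of the preceding remark with the identity $\ss=\ss_\oo\subseteq\oo$ just established. First note that $\oo$ is stable under the right action of $G$: for $g\in G$ one has ${\bf e_1}g=\left(\begin{array}{cc} g & 0 \\ 0 & 1 \end{array}\right){\bf e_1}\in Gl_2(\a)\cdot{\bf e_1}=\oo$, and since the left and right actions commute, right multiplication by $g$ carries $\oo$ into itself. Now take an arbitrary $\xx\in\a^2_{reg}$ and factor it as $\xx=\bigl(\xx\langle\xx,\xx\rangle^{-1/2}\bigr)\,\langle\xx,\xx\rangle^{1/2}$, where $\langle\xx,\xx\rangle^{1/2}\in G$ because $\xx$ is regular. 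By the remark the first factor lies in $\ss=\ss_\oo\subseteq\oo$, and right multiplying it by $\langle\xx,\xx\rangle^{1/2}\in G$ keeps it in $\oo$; hence $\xx\in\oo$. This gives $\a^2_{reg}\subseteq\oo$ and therefore $\oo=\a^2_{reg}$.
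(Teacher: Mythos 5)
Your proof is correct, and it is essentially the argument the paper intends: the corollary is stated without proof precisely because it follows from the three preceding results you invoke --- the theorem (every $\xx\in\ss$ is an isometry of infinite co-rank, classified up to left unitary equivalence by \cite{arv}), the proposition identifying $\ss_\oo$ with the orbit $\u_2(\a)\cdot{\bf e_1}$, and the retraction $\xx\mapsto\xx\langle\xx,\xx\rangle^{-1/2}$ combined with right $G$-stability of $\oo$. The only implicit point, that the two infinite co-ranks agree as cardinals, is glossed over in exactly the same way in the paper's own connectedness argument, so your treatment is consistent with the text.
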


\section{The projective line over $\a$}

We shall call {\it rank one submodule} a closed submodule $\ell\subset \a^2$ which is orthocomplemented,  i.e., 
$$
\ell\oplus\ell^\perp=\a^2 ,\ \hbox{ where } \ell^\perp=\{\yy\in\a^2: <\xx,\yy>=0 \hbox{ for all } \xx\in\ell\},
$$
and such that there exists $\xx_0\in\ell$ with $[x_0]=\{x_0a: a \in \a\}=\ell$.
As the result below shows, not every element in $\a^2$ generates a rank one submodule:
\begin{prop}
Let $\xx\in\a^2$, $\xx\ne 0$. Then $\ell=\{\xx a: a\in\a\}$ is a rank one submodule if and only if there exists $a_0\in\a$ such that $\xx_0=\xx a$ is also a generator for $\ell$, and $<\xx_0,\xx_0>$ is a projection in $\a$.
\end{prop}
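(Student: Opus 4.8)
For the \emph{if} direction, suppose such an $a_0$ exists, write $\xx_0=\xx a_0$ and $p=\langle \xx_0,\xx_0\rangle$, so that $p=p^*=p^2$. The plan is to exhibit the orthogonal projection onto $\ell$ explicitly as the ``rank one'' operator $P\yy:=\xx_0\langle \xx_0,\yy\rangle$. First I would record that $\xx_0(1-p)=0$: indeed $\langle \xx_0(1-p),\xx_0(1-p)\rangle=(1-p)\,p\,(1-p)=0$, so the vector itself vanishes, giving $\xx_0 p=\xx_0$. From this one checks directly that $P\in M_2(\a)$ is a self-adjoint idempotent: self-adjointness follows from $\langle P\yy,\zz\rangle=\langle \xx_0,\yy\rangle^*\langle \xx_0,\zz\rangle=\langle \yy,P\zz\rangle$, and idempotency from $P^2\yy=\xx_0 p\langle \xx_0,\yy\rangle=\xx_0\langle \xx_0,\yy\rangle=P\yy$. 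Since $P$ has range inside $[\xx_0]$ and restricts to the identity there (because $P(\xx_0 a)=\xx_0 p a=\xx_0 a$), its range is exactly $[\xx_0]=\ell$. A self-adjoint idempotent has closed range and yields $\a^2=R(P)\oplus N(P)=\ell\oplus\ell^\perp$, so $\ell$ is closed and orthocomplemented, i.e.\ a rank one submodule.

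For the \emph{only if} direction, assume $\ell=\{\xx a:a\in\a\}$ is a rank one submodule. Viewing $\xx$ as the adjointable operator $\a\to\a^2$, $a\mapsto\xx a$, its range equals $\ell$, which is closed by hypothesis. I would then invoke the standard theory of adjointable operators on Hilbert C$^*$-modules: an operator with closed range has a closed-range adjoint and a polar decomposition, and its modulus squared $r:=\xx^*\xx=\langle \xx,\xx\rangle\in\a$ again has closed range. The point of this is that the positive element $r$ then admits a support projection $q\in\a$ and a Moore--Penrose inverse $r^\dagger\in\a$ with $r r^\dagger=r^\dagger r=q$. Setting $a_0:=(r^{1/2})^\dagger$ and $\xx_0:=\xx a_0$, a direct computation gives $\langle \xx_0,\xx_0\rangle=a_0^* r a_0=q$, a projection.

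It then remains to check that $\xx_0$ still generates $\ell$. Since $\xx(1-q)=0$ (because $\langle \xx(1-q),\xx(1-q)\rangle=(1-q)\,r\,(1-q)=0$), one has $[\xx]=\{\xx b:b\in q\a\}$; and since $a_0=(r^{1/2})^\dagger$ carries $\a$ onto the support ideal $q\a$, the two sets $\{\xx a_0 a:a\in\a\}$ and $\{\xx b:b\in q\a\}$ coincide, whence $[\xx_0]=[\xx]=\ell$. This completes the construction of the required generator.

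The main obstacle is the \emph{only if} direction, specifically the passage from ``$\ell$ is orthocomplemented'' to the existence of a single generator whose self-inner-product is a projection. All the difficulty is concentrated in the fact that orthocomplementation of $\ell=R(\xx)$ forces $r=\langle \xx,\xx\rangle$ to have closed range, so that its support projection and generalized inverse genuinely live inside $\a$ rather than in a bidual or a measurable completion; once closed range is secured, the normalization by $a_0=(r^{1/2})^\dagger$ is routine. I therefore expect the C$^*$-module closed-range input to be the crux, and I would isolate it as a cited lemma rather than reprove it here.
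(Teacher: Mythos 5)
Your proof is correct, and while your \emph{if} direction is essentially the paper's (both verify that $P\yy=\xx_0\langle \xx_0,\yy\rangle$ is a selfadjoint adjointable idempotent with range $\ell$, after the observation $\xx_0 p=\xx_0$), your \emph{only if} direction takes a genuinely different route. The paper argues in an elementary, self-contained way: the adjointable projection onto $\ell$ has the form $p(\yy)=\xx\langle \zz,\yy\rangle$ (every bounded $\a$-linear map $\a^2\to\a$ is inner, $\a^2$ being free and finitely generated), selfadjointness forces $\zz=\xx a$, the normalization $b=(a^*\langle \xx,\xx\rangle a)^{1/2}$ rewrites $p=\xx b\,\langle \xx b,\,\cdot\,\rangle$, and then $p^2=p$ yields $c^2=c^3$ for $c=\langle \xx b,\xx b\rangle\ge 0$, so $\sigma(c)\subseteq\{0,1\}$ and $c$ is a projection; the new generator $\xx_0=\xx b$ comes with an explicit formula extracted from the projection itself. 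You instead invoke the closed range theorem for Hilbert C$^*$-modules (Lance, Theorem 3.2) applied to the adjointable map $\a\to\a^2$, $a\mapsto \xx a$: closedness of $\ell=R(\xx)$ gives that $\xx^*$ has closed range, hence so does $r=\langle \xx,\xx\rangle$ (since $R(r)=R(\xx^*)$), so $0$ is isolated in $\sigma(r)$, and the support projection $q$ and the element $a_0=(r^{1/2})^\dagger$ genuinely live in $\a$, giving $\langle \xx a_0,\xx a_0\rangle=q$; your closing verification that $\xx a_0$ still generates $\ell$ (via $\xx(1-q)=0$ and $a_0\a=q\a$) is precisely the step that must not be skipped, and you carry it out correctly. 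As for what each approach buys: yours is modular and generalizes verbatim to a singly generated complemented submodule of any Hilbert $\a$-module, at the price of citing the closed-range and Moore--Penrose machinery (note the polar decomposition you mention is never actually used---closedness of $R(r)$ suffices); the paper's uses nothing beyond functional calculus and the self-duality of the free module $\a^2$, and obtains the normalizing element directly from the given projection rather than from a generalized inverse.
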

\begin{proof}
Suppose that there is a generator $\xx_0$ of $\ell$ such that $<\xx_0,\xx_0>=p_0$ is a projection. Note that $\xx_0=\xx_0p_0$. Indeed, $\xx_0(1-p_0)$ satisfies $<\xx_0(1-p_0),\xx_0(1-p_0)>=(1-p_0)p_0(1-p_0)=0$, i.e., $\xx_0(1-p_0)=0$. Then $p_{\xx_0}$ defined by $p_{\xx_0}(\yy)=\xx_0<\xx_0,\yy>$ is the orthogonal projection onto $\ell$. Clearly, it is selfadjoint. It is a projection: 
$$
p_{\xx_0}^2(\yy)=\xx_0<\xx_0,\xx_0<\xx_0,\yy>>=\xx_0 p_0<\xx_0,\yy>=\xx_0<\xx_0,\yy>=p_{\xx_0}(\yy).
$$
Clearly, the range of $p_{\xx_0}$ is $\ell$.

Conversely, suppose that $\ell=[\xx]$ is orthocomplemented. Then there exists a symmetric adjointable projection $p:\a^2\to\ell\subset \a^2$. It is of the form $p(\yy)=\xx \varphi(\yy)$, where $\varphi:\a^2\to\a$ is $\a$-linear and bounded. Then (see \cite{lance}, p. 13) $\varphi$  is of the form $\varphi(\yy)=<\zz,\yy>$ for some $\zz\in\a^2$. The adjoint of $p(\yy)=\xx<\zz,\yy>$ is $p^*(\yy)=\zz<\xx,\yy>$. Therefore, the fact that $p^*=p$,  implies that $\zz=\xx a$ for some $a\in\a$. Thus, $p=\xx a<\xx,\ \  >=\xx<\xx a, \ \  >=xa^*<\xx, \ \ >$. Using that $p^2=p$,
we get that 
$$
p=p^2=\xx a^*<\xx,\xx a<\xx,  \ \ >>=\xx a^*<\xx,\xx>a<\xx, \ \ >=\xx b<\xx b,\  \ >, 
$$
where $b=(a^*<\xx,\xx>a)^{1/2}$. Denote $\xx_0=\xx b$. The above computation shows that $\xx_0$ is also a generator for $\ell$. 
Then $p=p_{\xx_0}$. Using again the fact that $p^2=p$, we obtain
$$
\xx_0<\xx_0,\ \  >=p=p^2=\xx_0<\xx_0,\xx_0><\xx_0, \ \ >.
$$
Evaluating at  $\xx_0$, we get $\xx_0<\xx_0,\xx_0>=\xx_0<\xx_0,\xx_0>^2$; applying $<\xx_0,\ \ >$ to this equality, we obtain that $c=<\xx_0,\xx_0>$ satisfies $c^2=c^3$. Since $c\ge 0$, this means that $c$ has two spectral values $0,1$, and thus $c$ is a (non zero) projection.
\end{proof}

In this paper, we shall be interested in rank one modules $\ell=[\xx]$ with $\xx\in\oo$. Note that for these special generators, the projection $p_0$ in the above proposition is $<\xx,\xx>=1$.

Define
$$
\pa=\{\ell=[\xx]: \xx\in\oo\}.
$$

An equivalence relation is defined in $\oo$: $\xx\sim\xx'$ iff  there exists  $a\in G$  such that $\xx'=\xx a$ (or equivalently, $[\xx]=[\xx']$). Then, it holds that also $\pa=\oo/\sim$.

The map
$$
\oo\to \pa \, \ \ \xx\mapsto [\xx]
$$
is onto, and the fibers  are the equivalence classes  in $\oo$.

Consider the map 
$$
\oo\to \ss_\oo \ , \  \ \ \xx\mapsto \xx \langle\xx,\xx\rangle^{-1/2}.
$$
Clearly, it is  a well defined and continuous retraction.

\begin{rem}
It should be noticed that the projective spaces of C$^*$-modules over $\a$ which appear in \cite{paisanos1} and \cite{paisanos2} are strictly bigger than $\pa$.
\end{rem}

We end this section by defining the {\it cross ratio}  $CR(\ell_1, \ell_2, \ell_3, \ell_4)$ of four submodules $\ell_1, \ell_2, \ell_3, \ell_4$ in $\pa$, following ideas by M.I. Zelikin \cite{zelikin}. 

\begin{defi}
We denote by $CR(\ell_1, \ell_2, \ell_3, \ell_4)$  the (possibly empty) set of module homomorphisms $\varphi: \ell_3\to \ell_3$ of the form $\varphi=\psi \eta$, where the homomorphisms $\eta:\ell_3\to\ell_2$, $\psi:\ell_2\to\ell_3$ satisfy $x-\psi(x)\in\ell_4$ and $y-\eta(y)\in\ell_1$, for all $x\in\ell_3$, $y\in\ell_2$.
\end{defi}

\section{The hyperbolic part of the $\a$-projective line:  the Poincar\'e disk of $\a$.}

In this section we collect from \cite{tejemas} certain facts concerning the hyperbolic geometry of the unit disk $\d$ of $\a$. Recall that, with the appropriate metric, it is a model for the part of $\pa$ which carries a nonpositively curved geometry. The main feature is a signed sesquilinear form $\theta$, and the group which preserves it.
More precisely, define
$$
\theta:\a^2\times \a^2 \to \a \ ,\ \theta(\xx,\yy)=x_1^*y_1-x_2^*y_2.
$$
In \cite{tejemas} it was denoted $\theta_D$.
Note that $\theta(\xx,\yy)=\langle \vecx, \rho_\theta \vecy \rangle$, where 
$$
\rho=\rho_\theta=\left(\begin{array}{cc} 1 & 0 \\ 0 & -1 \end{array} \right).
$$ 
The main tool in the study of the geometry of the Poincar\'e disk of $\a$ is the subgroup $\u(\theta)$ of $Gl_2(\a)$ which leaves invariant the  form $\theta$:
$$
\u(\theta)=\{\tilde{g} \in Gl_2(\a): \theta (\tilde{g} \xx, \tilde{g} \yy)=\theta(\xx,\yy) \hbox{ for all } \xx,\yy\in\a^2\}.
$$
We call $\u(\theta)$ the {\it unitary group of the form} $\theta$.

Equivalently, $\tilde{g}\in Gl_2(\a)$ belongs to $\u(\theta)$ iff $\tilde{g}^*\rho \tilde{g}=\rho$.
In \cite{tejemas} it was shown that $\u(\theta)$ is a  complemented Banach-Lie subgroup of $Gl_2(\a)$.

Let us define the {\it hyperbolic part} $\pa^\theta$ of $\pa$.
\begin{defi}
The set
$$
\pa^\theta=\{[\xx]\in\pa: \hbox{there exists a generator } \xx_0 \hbox{ of  } [\xx] \hbox{ such that  } \theta(\xx_0,\xx_0)\in G^+\}
$$
is called the hyperbolic part of $\pa$.
\end{defi}
\begin{rem}
Note that 
$$
\theta(\xx a, \xx a)=a^*x_1^*x_1a-a^*x_2^*x_2a=a^*(x_1^*x_2-x_2^*x_2)a=a^*\theta(\xx,\xx) a.
$$
Therefore, if $\theta(\xx_0,\xx_0)\in G^+$ for a generator $\xx_0$ of $[\xx]$, then the same is true for any other generator of $[\xx]$. 

Also, it holds that $\theta$ is positive (semidefinite) in $[\xx]$. But the converse does not hold: $\theta$  might be positive in $[\xx]$  without $\theta(\xx_0,\xx_0)$ being invertible for any generator $\xx_0$.
\end{rem}

\begin{teo}\label{transitiva}
The group $\u(\theta)$ acts transitively in $\pa^\theta$. If $\xx\in\oo$ and $\tilde{g}\in\u(\theta)$, the action is given by
$$
\tilde{g}\cdot[\xx]=[\tilde{g}\xx].
$$
\end{teo}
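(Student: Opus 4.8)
\emph{Plan.} Since $\u(\theta)$ is a group, transitivity will follow once I show that every class of $\pa^\theta$ lies in the single orbit of the base point $[{\bf e_1}]$. Indeed $\theta({\bf e_1},{\bf e_1})=1\in G^+$, so $[{\bf e_1}]\in\pa^\theta$, and if $\tilde g_1,\tilde g_2\in\u(\theta)$ carry $[{\bf e_1}]$ to $[\xx]$ and $[\yy]$ respectively, then $\tilde g_2\tilde g_1^{-1}\in\u(\theta)$ carries $[\xx]$ to $[\yy]$. Thus the whole statement reduces to producing, for an arbitrary $[\xx]\in\pa^\theta$, an element $\tilde g\in\u(\theta)$ whose first column $\tilde g{\bf e_1}$ generates $[\xx]$ (note $\tilde g{\bf e_1}\in\oo$ automatically, as $\tilde g\in Gl_2(\a)$).

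\emph{Normalisation and completion.} Given $[\xx]\in\pa^\theta$, I first choose a generator $\xx_0$ with $\theta(\xx_0,\xx_0)\in G^+$ and normalise it: replacing $\xx_0$ by $\xx_0\,\theta(\xx_0,\xx_0)^{-1/2}$ and using the transformation law $\theta(\xx_0 u,\xx_0 u)=u^*\theta(\xx_0,\xx_0)u$ recorded in the Remark above, I may assume $\theta(\xx_0,\xx_0)=x_1^*x_1-x_2^*x_2=1$. It remains to complete the column $\xx_0$ to a $\theta$-unitary $2\times2$ matrix, that is, to find a second column $\yy_0$, with components $y_1,y_2$, satisfying $\theta(\yy_0,\yy_0)=-1$ and $\theta(\xx_0,\yy_0)=0$. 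The explicit choice
\[
y_1=x_1x_2^*(1+x_2x_2^*)^{-1/2},\qquad y_2=(1+x_2x_2^*)^{1/2}
\]
works: using $x_1^*x_1=1+x_2^*x_2$ and the functional calculus of $x_2x_2^*$ one checks $y_1^*y_1=x_2x_2^*$, whence $\theta(\yy_0,\yy_0)=x_2x_2^*-(1+x_2x_2^*)=-1$, while $(1+x_2^*x_2)x_2^*=x_2^*(1+x_2x_2^*)$ gives $x_1^*y_1=x_2^*y_2$, i.e. $\theta(\xx_0,\yy_0)=0$. Setting $\tilde g=(\xx_0\mid\yy_0)$ then yields $\tilde g^*\rho\tilde g=\rho$.

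\emph{Main obstacle.} The delicate point is not the identity $\tilde g^*\rho\tilde g=\rho$ but the invertibility of $\tilde g$ in $Gl_2(\a)$, which is exactly what membership in $\u(\theta)$ demands. The relation $\tilde g^*\rho\tilde g=\rho$ only exhibits $\rho\tilde g^*\rho$ as a \emph{left} inverse; genuine invertibility is equivalent to the companion identity $\tilde g\rho\tilde g^*=\rho$, whose $(1,1)$ entry reads $x_1x_1^*-y_1y_1^*=1$. Applying the push-through identity $x_2^*(1+x_2x_2^*)^{-1}x_2=1-(1+x_2^*x_2)^{-1}$ reduces this to $x_1(x_1^*x_1)^{-1}x_1^*=1$, i.e. to the invertibility (not merely left-invertibility) of $x_1$. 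From the normalisation I only obtain $x_1^*x_1=1+x_2^*x_2\ge1$, so $x_1$ is left-invertible; upgrading this to two-sided invertibility is where the fact that $\xx_0\in\oo$ completes to an \emph{invertible} matrix of $Gl_2(\a)$ must be exploited, equivalently where the identification of $\pa^\theta$ with the disk $\d$ (for which the first coordinate is forced to be invertible) enters. Once $x_1$ is invertible, $\tilde g\rho\tilde g^*=\rho$ holds, $\tilde g\in\u(\theta)$, and $\tilde g\cdot[{\bf e_1}]=[\xx_0]=[\xx]$, completing the argument. I expect essentially all of the difficulty to be concentrated in this invertibility step, the two push-through verifications being routine.
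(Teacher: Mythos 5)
Your reduction to the orbit of $[{\bf e_1}]$, the normalisation $\theta(\xx_0,\xx_0)=1$, and the two push-through verifications for the completing column $\yy_0$ are all correct; and once $x_1\in G$ is granted, your construction does finish the proof (from $\tilde{g}^*\rho\tilde{g}=\rho$ together with the computed identity $\tilde{g}\rho\tilde{g}^*=\rho$, the matrix $\rho\tilde{g}^*\rho$ is a two-sided inverse, so $\tilde{g}\in\u(\theta)$ and $\tilde{g}\cdot[{\bf e_1}]=[\xx_0]$). But the step you defer --- upgrading the left-invertibility of $x_1$ to invertibility --- is not a routine loose end that "the fact that $\xx_0\in\oo$ completes to an invertible matrix" will supply: that route provably cannot work. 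Membership of $\xx_0$ in $\oo$ does \emph{not} force $x_1\in G$, even together with $\theta(\xx_0,\xx_0)=1$. Take $\a=\b(\h)$ and $\xx_0=\left(\begin{array}{c} S\\ 0\end{array}\right)$ with $S$ the unilateral shift: then $\theta(\xx_0,\xx_0)=S^*S=1\in G^+$, and $\xx_0\in\oo$ because $\left(\begin{array}{cc} S & 1-SS^*\\ 0 & S^*\end{array}\right)$ is invertible, with inverse $\left(\begin{array}{cc} S^* & 0\\ 1-SS^* & S\end{array}\right)$, and has first column $\xx_0$. Every generator of $[\xx_0]$ has first entry $Sa$ with $a\in G$, never invertible; and indeed no $\tilde{g}\in\u(\theta)$ carries $[{\bf e_1}]$ to $[\xx_0]$: the relations $\tilde{g}^*\rho\tilde{g}=\rho$ and $\tilde{g}\rho\tilde{g}^*=\rho$ force $g_{11}=Su$ ($u$ unitary), $g_{21}=0$ and $g_{11}g_{11}^*-g_{12}g_{12}^*=1$, whence $g_{12}g_{12}^*=SS^*-1\le 0$, so $g_{12}=0$ and $SS^*=1$, a contradiction. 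So completability to an element of $Gl_2(\a)$ is strictly weaker than completability to an element of $\u(\theta)$, and your "main obstacle" is a genuine counterexample to the proposed strategy, not a technicality.

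What makes the theorem true is that the paper (despite the literal wording of its definition of $\pa^\theta$) understands membership in $\pa^\theta$ to include invertibility of the first coordinate of a generator: this is stated explicitly in the "Recall" opening the proof of Lemma \ref{forma negativa}, and it is built into the hyperboloid $\k_\theta=\{\xx\in\a^2:\theta(\xx,\xx)=1,\ x_1\in G\}$. The paper's own proof then imports two facts from \cite{tejemas}: that $\pi_\theta:\k_\theta\to\pa^\theta$ is onto (so every class has a generator in $\k_\theta$, which is your normalisation \emph{plus} $x_1\in G$), and that $\u(\theta)$ --- in fact the Borel subgroup $\b_\theta$ --- acts transitively on $\k_\theta$; transitivity on $\pa^\theta$ is then a two-line consequence. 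Note also that your appeal to "the identification of $\pa^\theta$ with the disk $\d$" to secure $x_1\in G$ is circular: Lemma \ref{lema48} and Proposition \ref{prop49} construct that identification precisely from generators with invertible first coordinate, i.e.\ from the surjectivity of $\pi_\theta$, which is the very fact you would need. If you add the hypothesis $x_1\in G$ where the paper implicitly assumes it, your explicit completion is a nice self-contained substitute for citing the transitivity on $\k_\theta$ from \cite{tejemas}; without it, the proof has a hole that cannot be filled from $\xx_0\in\oo$ alone.
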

Before we give the proof of this statement, we introduce certain facts on $\pa^\theta$.
Note that if $\xx\in\oo$ is a generator of $[x]\in\pa^\theta$, then 
$$
\theta(\tilde{g}\xx,\tilde{g}\xx)=\theta(\xx,\xx)\in G^+.
$$
Also it is clear that $\tilde{g}\xx\in\oo$. Thus,  $\u(\theta)$ acts in $\pa^\theta$. To prove the above theorem, we only need to  show  that  the action is transitive, i.e., that if $\xx$ satisfies $\theta(\xx,\xx)\in G^+$ (i.e. $\xx\in\oo$), then there exists $\tilde{h}\in\u(\theta)$ such that $\xx=\tilde{h}{\bf e_1}$.

\begin{defi}
Let us denote by $\k_\theta$ the hyperboloid defined by the form $\theta$:
$$
\k_\theta=\{\xx\in\a^2: \theta(\xx,\xx)=1 \hbox{ and } x_1\in G\}.
$$
\end{defi}
The space $\k_\theta$ will be a sort of coordinate space for $\pa^\theta$.   Note that $\k_\theta\subset\a^2_{reg}$: $\theta(\xx,\xx)=1$ means that $x_1^*x_1=1+x_2^*x_2$, so that $\langle \xx,\xx\rangle =1+2 x_2^*x_2\in G^+$. Moreover, we shall see below that $\k_\theta\subset \oo$.
Let us recall the following facts from \cite{tejemas} (Sections 4 and 5):

\begin{prop}\label{tejemas}
With the current notations, we have the following:
\begin{enumerate}
\item
The group $\u(\theta)$ acts  on $\k_\theta$: if  $\xx\in\k_\theta$ and $\tilde{g}\in\u(\theta)$, then $\tilde{g} \xx\in\k_\theta$. The action is transitive. In particular, $\k_\theta=\{\tilde{g} {\bf e_1}: \tilde{g}\in\u(\theta)\} \subset \oo$.
\item
There is a (complemented Banach-Lie) subgroup $\b_\theta$ of $\u(\theta)$ which acts freely and transitively in $\k_\theta$. In particular, $\k_\theta$ has  group structure.
\item
$\k_\theta$ is a complemented $C^\infty$-submanifold of $\a^2$. The map
$$
\pi_\theta:\k_\theta \to \pa^\theta \ , \ \ \pi_\theta(\xx)=[\xx]
$$
is a $C^\infty$ submersion. In particular, it is onto. If $\xx,\yy\in\k_\theta$ satisfy $[\xx]=[\yy]$, then there exists $u\in\u$ such that $\yy=\xx u$, i.e., the fibers of $\pi_\theta$ identify with $\u$.
\item The map $\pi_\theta$ is $\u(\theta)$-equivariant: if $\tilde{g}\in\u(\theta)$ and $\xx\in\k_\theta$,
$$
\pi_\theta(\tilde{g} \xx)=[\tilde{g}\xx]=\tilde{g}\cdot[\xx]=\tilde{g}\cdot \pi_\theta(\xx). 
$$
\end{enumerate}
\begin{proof} (of Theorem \ref{transitiva})
Note that ${\bf e_1}\in\k_\theta$. Therefore if $\xx\in\k_\theta$, there exists $\tilde{g}\in\u(\theta)$ such that $\tilde{g}{\bf e_1}=\xx$. Since $\pi_\theta$ is onto, it follows that any $[\xx]\in \pa^\theta$ has a generator (say) $\xx\in\k_\theta$. Then $[\xx]=\tilde{g}\cdot[{\bf e_1}]$.
\end{proof}
\end{prop}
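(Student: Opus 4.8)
The plan is to prove the statement in two stages: first check that $\tilde{g}\cdot[\xx]:=[\tilde{g}\xx]$ really defines an action of $\u(\theta)$ on $\pa^\theta$, and then show that every point is reachable from the base point $[{\bf e_1}]$, using the hyperboloid $\k_\theta$ as a coordinate space. For the first stage: since $\oo=Gl_2(\a)\cdot{\bf e_1}$ and $\u(\theta)\subset Gl_2(\a)$, the vector $\tilde{g}\xx$ again lies in $\oo$, so $[\tilde{g}\xx]\in\pa$; because $\tilde{g}$ preserves $\theta$ we have $\theta(\tilde{g}\xx,\tilde{g}\xx)=\theta(\xx,\xx)$, so a $\theta$-positive generator is carried to a $\theta$-positive generator and $[\tilde{g}\xx]\in\pa^\theta$. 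Independence of the generator is immediate from the opening Proposition: the left $Gl_2(\a)$-action and the right $G$-action commute, so $\xx'=\xx a$ with $a\in G$ gives $\tilde{g}\xx'=(\tilde{g}\xx)a$ and hence $[\tilde{g}\xx']=[\tilde{g}\xx]$. The identities making this a group action are then formal, so it only remains to prove transitivity.

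Transitivity I would route through $\k_\theta$, noting that ${\bf e_1}\in\k_\theta$. The two ingredients, recorded as Proposition \ref{tejemas}, are: (i) $\u(\theta)$ acts transitively on $\k_\theta$, so $\k_\theta=\u(\theta)\cdot{\bf e_1}$; and (ii) the map $\pi_\theta:\k_\theta\to\pa^\theta$, $\xx\mapsto[\xx]$, is onto. Granting these, an arbitrary $[\xx]\in\pa^\theta$ has a representative $\yy\in\k_\theta$ with $[\yy]=[\xx]$; writing $\yy=\tilde{g}{\bf e_1}$ yields $[\xx]=[\tilde{g}{\bf e_1}]=\tilde{g}\cdot[{\bf e_1}]$, which is exactly transitivity.

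The transitivity on $\k_\theta$ I would make explicit rather than merely cite. Given $\xx\in\k_\theta$, so $x_1\in G$ and $x_1^*x_1-x_2^*x_2=1$, set $w=x_2x_1^{-1}$; then $w^*w=1-(x_1x_1^*)^{-1}$, so $\|w\|<1$. From $\tilde{g}^*\rho\tilde{g}=\rho$ together with its consequence $\tilde{g}\rho\tilde{g}^*=\rho$ one reads off $a^*a\ge 1$ and $aa^*\ge 1$ for the top-left entry $a$ of any $\tilde{g}\in\u(\theta)$, so that entry is invertible; this both keeps $\u(\theta)$ inside $\k_\theta$ and makes legitimate the self-adjoint ``boost''
$$
\tilde{g}_w=\left(\begin{array}{cc} (1-w^*w)^{-1/2} & (1-w^*w)^{-1/2}w^* \\ w(1-w^*w)^{-1/2} & (1-ww^*)^{-1/2} \end{array}\right).
$$
A short functional-calculus check (using $w(1-w^*w)^{-1/2}=(1-ww^*)^{-1/2}w$) shows $\tilde{g}_w^*\rho\tilde{g}_w=\rho$, so $\tilde{g}_w\in\u(\theta)$, and that $\tilde{g}_w{\bf e_1}$ has the same ``slope'' $w$ and the same value of $\theta$ as $\xx$; the two therefore differ by a diagonal unitary $\mathrm{diag}(u,u)\in\u(\theta)$, giving $\xx\in\u(\theta)\cdot{\bf e_1}$.

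The genuinely delicate point, and the one I expect to be the main obstacle, is ingredient (ii): the surjectivity of $\pi_\theta$, i.e. that every $\theta$-positive class already admits a generator whose first coordinate is invertible (equivalently, corresponds to an honest point $w\in\d$). Normalizing a generator only gives $x_1^*x_1=1+x_2^*x_2\ge 1$, so $x_1$ is merely left-invertible, and upgrading this to invertibility is where the structure theory of the regular (unimodular) elements of $\a^2$ must be used — this is precisely why the hypothesis is $\xx\in\oo$ and not simply $\theta(\xx,\xx)\in G^+$. I would isolate and import this surjectivity from \cite{tejemas} (building on \cite{cl}); everything else reduces to the bookkeeping and the explicit boost above.
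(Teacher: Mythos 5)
Your skeleton coincides with the paper's: the proof given for Theorem \ref{transitiva} consists of exactly your two ingredients --- transitivity of $\u(\theta)$ on $\k_\theta$, and surjectivity of $\pi_\theta$ --- both imported from \cite{tejemas} via Proposition \ref{tejemas}, preceded by the same well-definedness remarks that the paper makes just before the proposition. Your explicit verification of the $\k_\theta$-transitivity is correct and is a welcome filling-in of the citation: $\tilde{g}_w$ is selfadjoint, the intertwining $w(1-w^*w)^{-1/2}=(1-ww^*)^{-1/2}w$ gives $\tilde{g}_w\rho\,\tilde{g}_w=\rho$, and since $x_1\in G$ the normalization $\theta(\xx,\xx)=x_1^*(1-w^*w)x_1=1$ forces $x_1x_1^*=(1-w^*w)^{-1}$, so that $u=(1-w^*w)^{1/2}x_1$ is a genuine unitary; your $\tilde{g}_w$ is essentially the element $\tilde{g}_z$ which the paper itself exhibits later (Lemma \ref{matrizon} and the global section of Section 8). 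One compression to repair: invertibility of $g_{11}$ alone does not yet show that the action preserves $\k_\theta$; you also need $\|g_{11}^{-1}g_{12}\|\le 1$, which follows from $g_{11}g_{11}^*-g_{12}g_{12}^*=1$, so that $g_{11}+g_{12}w$ is invertible when $\|w\|<1$ --- the computation carried out in the lemma preceding Proposition \ref{accion en el borde}.

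Where you go astray is the diagnosis of ingredient (ii). You present surjectivity of $\pi_\theta$ as a delicate upgrade from left-invertibility to invertibility, made possible ``precisely'' by the hypothesis $\xx\in\oo$, to be extracted from the structure theory of unimodular rows. That claim would fail if executed: take $\a=\b(\h)$ and $\xx$ with $x_1=S$ the unilateral shift, $x_2=0$. Then $\theta(\xx,\xx)=S^*S=1\in G^+$; moreover $\xx\in\oo$, since $\oo=\a^2_{reg}$ for $\a=\b(\h)$ (Corollary \ref{conexa}); and $[\xx]$ is an honest rank one submodule, because $p(\yy)=\xx\langle\xx,\yy\rangle$ is an adjointable selfadjoint projection onto it. Yet every generator of $[\xx]$ is of the form $\xx a_0$, with first coordinate $Sa_0$ whose range lies in the proper closed subspace $R(S)$, hence never invertible: $[\xx]$ meets the literal definition of $\pa^\theta$ but admits no generator in $\k_\theta$. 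So $\oo$-membership does not rescue (ii). What the paper actually uses (see the proof of Lemma \ref{forma negativa}, where membership in $\pa^\theta$ is read as providing a generator with $x_1\in G$ \emph{and} $\theta(\xx_0,\xx_0)\in G^+$) makes surjectivity immediate: replace $\xx$ by $\xx\,\theta(\xx,\xx)^{-1/2}$, which lands in $\k_\theta$ and generates the same submodule. Your plan should import (ii) in that form --- invertibility of the first coordinate built into the definition, as in \cite{tejemas} --- rather than hope to derive it from regularity of $\xx$.
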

\begin{rem}\label{tejemasbis}

 Let us describe the group $\b_\theta$ explicitly:
$$
\b_\theta= \{ \left(  \displaystyle{\begin{array}{cc} \displaystyle{\frac{g+\hat{g}}{2}} - \hat{g}x & \displaystyle{\frac{g-\hat{g}}{2}}-\hat{g}x \\ \displaystyle{\frac{g-\hat{g}}{2}}+\hat{g}x & \displaystyle{\frac{g+\hat{g}}{2}} +\hat{g}x \end{array}} \right): g\in G ,  x^*=-x \} .
$$
In fact, the group used in \cite{tejemas} is $\b=\{\left(\begin{array}{cc} g & 0 \\ \hat{g}s & \hat{g} \end{array}\right): g\in G, s^*=s\}$.  The group considered here is conjugate of $\b$: 
$$
\b_\theta=U^* \b U,
$$
where $U=\frac1{\sqrt{2}}\left( \begin{array}{cc} 1 & 1 \\ i & -i \end{array} \right)$.
Indeed, if $g\in G$, and $\tilde{g}=\left(\begin{array}{cc} g & 0 \\ \hat{g}s & \hat{g} \end{array}\right)\in\b$, then
$$
U^*\tilde{g}U=\frac12\left(\begin{array}{cc}g+\hat{g} -i \hat{g} s & g-\hat{g} -i\hat{g}s \\ g-\hat{g} +i \hat{g}s & g+\hat{g}+i\hat{g}s \end{array}\right),
$$
where $x=\frac{i}{2}  s$ is anti-Hermitian.We call $\b_\theta$ the {\it Borel subgroup} of $\u(\theta)$. The corresponding Borel subgroup of $\u(\theta_H)$ for the Poincar\'e halfspace $\h$ was described in \cite{tejemas}. 
 The facts mentioned in Proposition \ref{tejemas} where proved in \cite{tejemas} for the hyperboloid $\k_{\theta_H}=U\k_\theta$, of the form  $\theta_H(\xx,\yy)=\theta(U^*\xx,U^*\yy)$).
\end{rem} 

\begin{rem}
There are two natural subgroups in $\b_\theta$, which are homomorphic images of the invertible group $G$ of $\a$ and the additive group $\a_{ah}$ of anti-Hermitian elements of $\a$, which we denote, respectively,  by $G_\theta$ and $T_\theta$. Namely,
$$
G_\theta=\{\tilde{\gamma}_g:=\frac12 \left(\begin{array}{cc} g+\hat{g} & g-\hat{g} \\ g-\hat{g} & g+\hat{g} \end{array}\right): g\in G\} \ \hbox{ and } \ 
T_\theta=\{\tilde{\tau}_x:=\left(\begin{array}{cc} 1-x & -x \\ x & 1+x \end{array}\right): x^*=-x\}.
$$
Elementary computations show that
\begin{enumerate}
\item
$G_\theta\subset \b_\theta$ and $T_\theta\subset\b_\theta$.
\item
The maps $G\ni g\mapsto \tilde{\gamma}_g\in G_\theta$ and $\a_{ah}\ni x\mapsto \tilde{\tau}_x\in T_\theta$ are group homomorphisms ($\a_{ah}$ considered with its additive structure). In particular,  $G_\theta$ and $T_\theta$ are subgroups of $\b_\theta$.
\item
$G_\theta$ and $T_\theta$  generate $\b_\theta$. More precisely, if $\tilde{g}\in\b_\theta$, $\tilde{g}=\left(  \displaystyle{\begin{array}{cc} \displaystyle{\frac{g+\hat{g}}{2}} - \hat{g}x & \displaystyle{\frac{g-\hat{g}}{2}}-\hat{g}x \\ \displaystyle{\frac{g-\hat{g}}{2}}+\hat{g}x & \displaystyle{\frac{g+\hat{g}}{2}} +\hat{g}x \end{array}} \right)$, then
$$
\tilde{g}=\frac12 \left(\begin{array}{cc} g+\hat{g} & g-\hat{g} \\ g-\hat{g} & g+\hat{g} \end{array}\right)\left(\begin{array}{cc} 1-x & -x \\ x & 1+x \end{array}\right).
$$
\item
The above factorization is unique. Or, equivalently, $G_\theta\cap T_\theta$ contains only the identity matrix.

\end{enumerate}
\end{rem}

Let us describe an alternative model for $\pa^\theta$, namely, the unit disk  $\d=\{a\in\a: \|a\|<1\}$ of $\a$.
There is a natural map from $\k_\theta$ to $\d$:
\begin{lem}\label{lema48}
The map $\k_\theta\to \d$, $\xx\mapsto x_2x_1^{-1}$ is well defined, onto and $C^\infty$.
\end{lem}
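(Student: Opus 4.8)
The plan is to dispatch the three assertions — well-definedness, surjectivity, and smoothness — in turn, with essentially all of the substance living in the first one, namely in checking that the image of $\xx\mapsto x_2x_1^{-1}$ really lands inside $\d$.

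For well-definedness, fix $\xx\in\k_\theta$ and write $a=x_2x_1^{-1}$; the goal is $\|a\|<1$. The engine is the defining relation $\theta(\xx,\xx)=x_1^*x_1-x_2^*x_2=1$, which I would rewrite as $x_2^*x_2=x_1^*x_1-1$ and substitute into
$$
a^*a=(x_1^{-1})^*x_2^*x_2\,x_1^{-1}=(x_1^*)^{-1}(x_1^*x_1-1)x_1^{-1},
$$
so that the first term collapses and one is left with the clean identity $a^*a=1-(x_1x_1^*)^{-1}$. Since $x_1\in G$, the element $x_1x_1^*$ is positive and invertible, whence $(x_1x_1^*)^{-1}\ge\|x_1x_1^*\|^{-1}\,1>0$; therefore $a^*a\le\bigl(1-\|x_1x_1^*\|^{-1}\bigr)1$, and taking norms gives $\|a\|^2=\|a^*a\|<1$, i.e. $a\in\d$.

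For surjectivity, given $a\in\d$ I would simply exhibit a preimage. As $\|a\|<1$, the element $1-a^*a\in G^+$, and I set $x_1=(1-a^*a)^{-1/2}$ and $x_2=a(1-a^*a)^{-1/2}$. Then $x_1\in G^+\subset G$, and since $a^*a$ commutes with every continuous function of $1-a^*a$ a one-line computation yields $x_1^*x_1-x_2^*x_2=(1-a^*a)^{-1}-a^*a(1-a^*a)^{-1}=1$, so $\xx\in\k_\theta$; and by construction $x_2x_1^{-1}=a$. Smoothness is then immediate: the assignment $(x_2,x_1)\mapsto x_2x_1^{-1}$ is the composition of the bounded bilinear multiplication of $\a$ with the (real-)analytic inversion map $G\to G$, hence $C^\infty$ on $\a\times G$; since $\k_\theta$ is a $C^\infty$-submanifold of $\a^2$ (Proposition \ref{tejemas}(3)) contained in $\a\times G$, the restriction is $C^\infty$.

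I expect the only genuinely delicate point to be obtaining the \emph{strict} inequality $\|a\|<1$ rather than merely $a^*a\le 1$. This is precisely where the hypothesis $x_1\in G$ is indispensable, as opposed to $x_1$ being only left-invertible: invertibility of $x_1$ forces $(x_1x_1^*)^{-1}$ to be bounded below by a positive multiple of the identity, which pushes the spectrum of $a^*a$ uniformly below $1$. Once that identity is in hand, everything else reduces to the explicit preimage above and to the standard smoothness of inversion in a Banach algebra.
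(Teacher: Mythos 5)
Your proposal is correct and follows essentially the same route as the paper: the identity $a^*a=1-(x_1x_1^*)^{-1}$ together with invertibility of $x_1$ to get the strict bound $\|a\|<1$, and the explicit preimage $x_1=(1-a^*a)^{-1/2}$, $x_2=ax_1$ for surjectivity. The only cosmetic difference is that you derive $\|a\|^2\le 1-\|x_1x_1^*\|^{-1}$ from the operator inequality $(x_1x_1^*)^{-1}\ge\|x_1x_1^*\|^{-1}1$, whereas the paper computes $\sup\{1-\tfrac1t: t\in\sigma(x_1x_1^*)\}<1$ directly — the same spectral fact in two guises.
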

\begin{proof}
Since $\theta(\xx,\xx)=1$, $x_2^*x_2=x_1^*x_1-1$. Then
$$
0\le (x_2x_1^{-1})^*x_2x_1^{-1}=(x_1^{-1})^*(x_1^*x_1-1)x_1^{-1}=1-(x_1x_1^*)^{-1}.
$$
Then 
$$
\|x_2x_1^{-1}\|^2=\|(x_2x_1^{-1})^*x_2x_1^{-1}\|=\|1-(x_1x_1^*)^{-1}\|=\sup\{ 1-\frac1{t}: t\in\sigma(x_1x_1^*)\}<1,
$$
because $\sigma(x_1x_1^*)$ is a compact set in $(0,+\infty)$ (recall that $x_1$ is invertible).
The map is clearly $C^\infty$. Pick $a\in\d$. Then $1-a^*a\in G$. Put $x_1=(1-a^*a)^{-1/2}$ and $x_2=ax_1$. Then, clearly, $\xx=\vecx$ belongs to $\k_\theta$ and is mapped to $a$.
\end{proof}
\begin{prop}\label{prop49}
The map $\k_\theta\to \d$ induces a $C^\infty$ diffeomorphism 
$$
\pa^\theta\stackrel{\simeq}{\longrightarrow} \d  \ , \ \ [\xx]\mapsto x_2x_1^{-1}.
$$  
\end{prop}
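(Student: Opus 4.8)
The plan is to realize the asserted bijection $\bar{F}:\pa^\theta\to\d$ as the descent of the map $F:\k_\theta\to\d$, $F(\xx)=x_2x_1^{-1}$ of Lemma \ref{lema48} along the submersion $\pi_\theta:\k_\theta\to\pa^\theta$, and then to produce an explicit smooth inverse. First I would check that $F$ is constant on the fibers of $\pi_\theta$. By Proposition \ref{tejemas}(3), two points $\xx,\yy\in\k_\theta$ satisfy $[\xx]=[\yy]$ if and only if $\yy=\xx u$ for some $u\in\u$; in that case $y_1=x_1u$ and $y_2=x_2u$, so $y_2y_1^{-1}=x_2u(x_1u)^{-1}=x_2x_1^{-1}$. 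Hence $F$ factors through $\pi_\theta$, giving a well-defined map $\bar{F}([\xx])=x_2x_1^{-1}$ with $\bar{F}\circ\pi_\theta=F$.

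For bijectivity I would exhibit a two-sided inverse using the section already implicit in Lemma \ref{lema48}. Define $s:\d\to\k_\theta$ by $s(a)=\bigl((1-a^*a)^{-1/2},\,a(1-a^*a)^{-1/2}\bigr)$, which makes sense because $a\in\d$ forces $1-a^*a\in G^+$; one checks $s(a)\in\k_\theta$ and $F(s(a))=a$, so $\bar{F}\circ(\pi_\theta\circ s)=\mathrm{id}_\d$. The reverse identity is where the defining constraint of $\k_\theta$ enters: given $\xx\in\k_\theta$ with $a:=x_2x_1^{-1}$, the equation $\theta(\xx,\xx)=1$ reads $x_1^*x_1-x_2^*x_2=1$, and substituting $x_2=ax_1$ yields $x_1^*(1-a^*a)x_1=1$. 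Thus $u:=(1-a^*a)^{1/2}x_1$ satisfies $u^*u=1$ and, being a product of invertibles, is unitary; consequently $x_1=(1-a^*a)^{-1/2}u$ and $x_2=ax_1=a(1-a^*a)^{-1/2}u$, i.e. $\xx=s(a)\,u$. Therefore $[\xx]=[s(a)]$, which gives $\pi_\theta\circ s\circ\bar{F}=\mathrm{id}_{\pa^\theta}$. Hence $\bar{F}$ is a bijection with inverse $\pi_\theta\circ s$.

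It remains to address smoothness. The inverse $\bar{F}^{-1}=\pi_\theta\circ s$ is $C^\infty$ because $\pi_\theta$ is $C^\infty$ and $s$ is $C^\infty$ (the map $a\mapsto(1-a^*a)^{-1/2}$ is smooth by the smooth functional calculus on $G^+$). For $\bar{F}$ itself I would use that $\pi_\theta$ is a $C^\infty$ submersion (Proposition \ref{tejemas}(3)), hence admits smooth local cross sections $\sigma$; on the corresponding open set $\bar{F}=\bar{F}\circ\pi_\theta\circ\sigma=F\circ\sigma$ is a composition of $C^\infty$ maps, so $\bar{F}$ is $C^\infty$. Combining the two directions shows $\bar{F}$ is a $C^\infty$ diffeomorphism. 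I expect the only genuinely non-formal step to be the unitarity computation in the second paragraph: everything else is bookkeeping about the fibration $\pi_\theta$, but identifying the fibers of $F$ with the $\u$-orbits (equivalently, proving $(1-a^*a)^{1/2}x_1$ is unitary) is exactly what makes $\bar{F}$ injective and its inverse well defined, and it is the place where the hyperboloid condition $\theta(\xx,\xx)=1$ is used essentially.
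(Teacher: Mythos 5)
Your proof is correct and takes essentially the same approach as the paper: factor the map through the quotient and exhibit the explicit inverse $z\mapsto\left[\left(\begin{smallmatrix}1\\ z\end{smallmatrix}\right)\right]$, which is exactly your $\pi_\theta\circ s$ since $s(a)$ and $\left(\begin{smallmatrix}1\\ a\end{smallmatrix}\right)$ generate the same class. One small remark: because generators in $\oo$ of the same class differ by an arbitrary $g\in G$ (not just a unitary), the identity $\xx=\left(\begin{smallmatrix}1\\ a\end{smallmatrix}\right)x_1$ with $x_1\in G$ already yields $[\xx]=[s(a)]$, so your unitarity computation for $u=(1-a^*a)^{1/2}x_1$, while correct and consistent with the fibers of $\pi_\theta$ being $\u$-orbits, is not actually needed for injectivity.
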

\begin{proof}
If $\xx,\yy\in\oo$ satisfy $[\xx]=[\yy]$, then there exists $g\in G$ such that $\yy=\xx g$, and thus $y_2y_1^{-1}=x_2g (x_1g)^{-1}=x_2x_1^{-1}$. Its inverse is 
$$
\d\ni z\mapsto \left[ \left( \begin{array}{c} 1 \\ z \end{array}  \right)\right] \in \pa^\theta.
$$
\end{proof}
The map $\pi_\theta:\k_\theta\to\pa^\theta$, $\pi_\theta(\xx)=[\xx]$  (or alternatively, $\tilde{\pi}_\theta:\k_\theta\to\d$, $\tilde{\pi}_\theta(\xx)=x_2x_1^{-1}$) is an analogue of the classical Hopf fibration.

It will be useful to describe the action of $\u(\theta)$ on the model $\d$. By straightforward computations, if $\tilde{g}\in\u(\theta)$ and $z\in\d$, then

\begin{equation}\label{accion en D}
\tilde{g}\cdot z=(g_{21}+g_{22}z)(g_{11}+g_{12}z)^{-1}.
\end{equation}
\begin{rem} \label{isotropiaD}
In particular, note that if $\tilde{k}\in\u(\theta)$ satisfies $\tilde{k}\cdot 0=0$, then
$\tilde{k}=\left( \begin{array}{cc} u_1 & 0 \\ 0 & u_2 \end{array} \right)$, with $u_1,u_2\in\u_\a$.
\end{rem}

We finish this section by addressing the characterization of the local regular structure of $\pa^\theta$. Instead of exhibiting an atlas of local coordinates, we refer to an  intrinsic model for $\pa^\theta$. In \cite{tejemas} we studied the space $\q_\rho$, which is defined as  the space 
$$
\q_{\rho}=\{\epsilon\in M_2(\a): \epsilon^2=1  \hbox{ and } \rho\epsilon\in G^+\}.
$$ 
The elements $\epsilon\in M_2(\a)$ satisfying $\epsilon^2=1$, usually called reflections, are in (natural) one to one correspondence with projections $q\in M_2(\a)$, $q^2=q$: $\epsilon\leftrightarrow q=\frac12(1+\epsilon)$. The condition $\epsilon\rho\in G^+$ implies, in particular, that $\epsilon$, and thus $q$, is $\theta$-selfadjoint. More precisely, the projections $q$ in $\q_\rho$, correspond to the submodules $\ell\in\pa^\theta$ (see \cite{tejemas}), by means of the one to one mapping
$$
\q_\rho \ni q \longleftrightarrow \ell=R(q)\in\pa^\theta.
$$
In \cite{prILLINOIS} it was shown that $\q_\rho$ is a complemented submanifold of $M_2(\a)$. A benefit we obtain from this coordinate free identification $\pa^\theta\simeq\q_\rho$, is a description of the tangent spaces of $\pa^\theta$ as subspaces of $\theta$-selfadjoint elements in $M_2(\a)$. We shall specify this in the next section.

\section{The tangent spaces of $\pa^\theta$.}\label{seccion corta}

In this short section we characterize the tangent spaces of the  projective line $\pa$, and its hyperbolic part $\pa^\theta$. If 
$\xx_0\in\oo$, we identify $\pa$  with $\oo / \xx_0 \cdot G$,  because $\oo$ is open in $\a^2$ (and $G$ is open in $\a$). Then  we have
$$
(T\pa)_{[\xx_0]} \simeq \a^2\  / \ \xx_0\cdot \a=\a^2 /[\xx_0].
$$

Note that $\pa^\theta$ is open in $\pa$.

Therefore, if $[\xx_0]\in \pa^\theta$, 
$$
(T\pa^\theta)_{[\xx_0]}= (T\pa)_{[\xx_0]} \simeq \a^2\  / \ \xx_0\cdot \a=\a^2 /[\xx_0].
$$
If $[\xx]\in\pa$, let us denote by $[\xx]^{\perp_\theta}$ the $\theta$-orthogonal complement of $[\xx]$.

\begin{lem}
If $\xx_0\in\oo$, then there exists the submodule $[\xx_0]^{\perp_\theta}$. It is generated by an element $\yy_0\in\a^2_{reg}$ (not necessarily in $\oo$).
\end{lem}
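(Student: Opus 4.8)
The plan is to reduce $\theta$-orthogonality to the ordinary $\a$-valued orthogonality, twisted by the symmetry $\rho=\mathrm{diag}(1,-1)$, and then to read off a single generator from the unitary-orbit description of $\ss_\oo$.

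First I would normalize the generator: replacing $\xx_0$ by $\xx_0\langle\xx_0,\xx_0\rangle^{-1/2}$, which generates the same submodule since $\langle\xx_0,\xx_0\rangle^{-1/2}\in G$, I may assume $\langle\xx_0,\xx_0\rangle=1$, i.e. $\xx_0\in\ss_\oo$. Next, since $\theta(\xx,\yy)=\langle\xx,\rho\yy\rangle$ and $\theta(\xx_0 a,\yy)=a^*\langle\xx_0,\rho\yy\rangle$, an element $\yy$ is $\theta$-orthogonal to $[\xx_0]$ precisely when $\langle\xx_0,\rho\yy\rangle=0$, that is, when $\rho\yy\in[\xx_0]^\perp$. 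Using $\rho^2=1$ this yields the clean identity
$$
[\xx_0]^{\perp_\theta}=\rho\,[\xx_0]^\perp .
$$
Because $[\xx_0]$ is a rank one submodule (its normalized generator satisfies $\langle\xx_0,\xx_0\rangle=1$, a projection), its orthogonal complement $[\xx_0]^\perp$ is orthocomplemented, and the selfadjoint unitary $\rho$ transports this to $[\xx_0]^{\perp_\theta}$; this already settles the existence half of the statement.

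The only delicate point is the single-generation, since orthogonal complements in a C$^*$-module need not be singly generated in general. Here I would invoke the orbit description $\ss_\oo=\u_2(\a)\cdot{\bf e_1}$ and write $\xx_0=\tilde u\,{\bf e_1}$ with $\tilde u\in\u_2(\a)$. As $\tilde u$ preserves the inner product, $[\xx_0]^\perp=\tilde u\,[{\bf e_1}]^\perp=\tilde u\,[{\bf e_2}]=[\tilde u\,{\bf e_2}]$, so that
$$
[\xx_0]^{\perp_\theta}=[\,\rho\tilde u\,{\bf e_2}\,],\qquad\text{with}\qquad \yy_0:=\rho\tilde u\,{\bf e_2}.
$$
Since $\rho$ and $\tilde u$ are unitaries, $\langle\yy_0,\yy_0\rangle=\langle{\bf e_2},{\bf e_2}\rangle=1\in G^+$, whence $\yy_0\in\a^2_{reg}$, as claimed. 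When $[\xx_0]\in\pa^\theta$ corresponds to $z\in\d$ via $\xx_0\sim\binom{1}{z}$, this produces the explicit generator $\yy_0\sim\binom{z^*}{1}$, with $\langle\yy_0,\yy_0\rangle=1+zz^*$ and $\theta(\yy_0,\yy_0)=zz^*-1$ negative definite, so $[\yy_0]\notin\pa^\theta$.

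I expect the main obstacle to be exactly the single-generation step: the passage $[\xx_0]^{\perp_\theta}=\rho[\xx_0]^\perp$ is formal, but concluding that this complement is generated by one regular vector genuinely uses the special structure of $\a^2_{reg}$, namely the unitary-orbit description of $\ss_\oo$ furnished by the earlier Proposition. I would also remark that, writing ${\bf e_2}=\tilde g\,{\bf e_1}$ for the invertible swap $\tilde g=\left(\begin{smallmatrix}0&1\\1&0\end{smallmatrix}\right)$, the vector $\yy_0$ in fact lies in $\oo$; the statement records only the weaker regularity $\yy_0\in\a^2_{reg}$ because that is all the later sections require.
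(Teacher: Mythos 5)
Your proposal is correct, and it proves the lemma by a genuinely different route than the paper. The paper simply exhibits the generator in coordinates: it takes $\yy_0=\left(\begin{smallmatrix} (x_1^*)^{-1}x_2^* \\ 1 \end{smallmatrix}\right)$, checks $\theta(\xx_0,\yy_0)=0$ by direct computation, and notes that $\xx_0$ and $\yy_0$ together generate $\a^2$; note this tacitly assumes $x_1\in G$, which is harmless in context (the lemma is applied to $\pa^\theta$, where generators with invertible first coordinate always exist via $\k_\theta$), but it is an assumption your argument does not need. Your reduction $[\xx_0]^{\perp_\theta}=\rho\,[\xx_0]^\perp$, combined with the orbit description $\ss_\oo=\u_2(\a)\cdot{\bf e_1}$, works uniformly for every $\xx_0\in\oo$, produces a normalized generator ($\langle \yy_0,\yy_0\rangle=1$), and correctly sharpens the parenthetical in the statement: your $\yy_0=\rho\tilde u\,{\bf e_2}$ does lie in $\oo$, since ${\bf e_2}=\sigma{\bf e_1}$ for the invertible swap $\sigma$ and $\rho\tilde u\sigma\in Gl_2(\a)$ (in fact the paper's own generator is also in $\oo$, being $\left(\begin{smallmatrix} 1 & (x_1^*)^{-1}x_2^* \\ 0 & 1 \end{smallmatrix}\right)\sigma\,{\bf e_1}$, so the paper's caveat is merely cautious, not a genuine failure). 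What the paper's approach buys in exchange is the explicit coordinate formula for $\yy_0$, which is not incidental: it is reused verbatim in Lemma \ref{forma negativa} to show $\theta(\yy_0,\yy_0)=x_2x_1^{-1}(x_2x_1^{-1})^*-1$ is negative and invertible, and again in the Finsler metric computations of Section 8; your structural argument would require recovering that formula (as you do in your closing remark for $\xx_0\sim\left(\begin{smallmatrix}1\\ z\end{smallmatrix}\right)$) before those later computations could proceed.
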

\begin{proof}
Consider
$$
\yy_0=\left( \begin{array}{c} (x_1^*)^{-1}x_2^* \\ 1 \end{array} \right).
$$
Straightforward computations show that $\theta(\xx_0,\yy_0)=0$. It is easy to see that $\xx_0$ and $\yy_0$ generate $\a^2$.
\end{proof}
\begin{lem}\label{forma negativa}
If $[\xx_0]\in\pa^\theta$, then the  form $\theta$ is negative and non degenerate in $[\xx_0]^{\perp_\theta}$: if $\yy\in [\xx_0]^{\perp_\theta}$, then $-\theta(\yy,\yy)\in G^+$.
\end{lem}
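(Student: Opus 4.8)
The plan is to prove the statement by passing to the explicit generator of $[\xx_0]^{\perp_\theta}$ furnished by the previous lemma and reducing the claim to a single norm estimate.

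First, since $\pi_\theta:\k_\theta\to\pa^\theta$ is onto (Proposition \ref{tejemas}), I may choose the generator $\xx_0=\vecx$ to lie in $\k_\theta$, so that $\theta(\xx_0,\xx_0)=x_1^*x_1-x_2^*x_2=1$ and $x_1\in G$. By the preceding lemma, $[\xx_0]^{\perp_\theta}$ is generated by $\yy_0=\left(\begin{array}{c}(x_1^*)^{-1}x_2^* \\ 1\end{array}\right)$, so every element of $[\xx_0]^{\perp_\theta}$ has the form $\yy=\yy_0 a$, $a\in\a$. Since $\theta(\yy_0 a,\yy_0 a)=a^*\theta(\yy_0,\yy_0)a$, it suffices to treat the generator $\yy_0$: once $-\theta(\yy_0,\yy_0)\in G^+$ is established, one has $-\theta(\yy_0 a,\yy_0 a)=a^*(-\theta(\yy_0,\yy_0))a$, which is positive semidefinite in general and lies in $G^+$ precisely for the generators of the module (i.e. for $a\in G$), which is the asserted negativity and non-degeneracy.

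Next, a direct computation, using $((x_1^*)^{-1}x_2^*)^*=x_2 x_1^{-1}$ and $x_1^{-1}(x_1^*)^{-1}=(x_1^*x_1)^{-1}$, gives
$$
\theta(\yy_0,\yy_0)=x_2(x_1^*x_1)^{-1}x_2^*-1 .
$$
Thus the assertion $-\theta(\yy_0,\yy_0)\in G^+$ amounts to $1-x_2(x_1^*x_1)^{-1}x_2^*\in G^+$, i.e. $\|x_2(x_1^*x_1)^{-1}x_2^*\|<1$. Setting $w=x_2(x_1^*x_1)^{-1/2}$, so that $x_2(x_1^*x_1)^{-1}x_2^*=ww^*$, and invoking the normalization $x_2^*x_2=x_1^*x_1-1$, I compute
$$
w^*w=(x_1^*x_1)^{-1/2}x_2^*x_2(x_1^*x_1)^{-1/2}=1-(x_1^*x_1)^{-1}.
$$
Since $\sigma(x_1^*x_1)$ is a compact subset of $(0,+\infty)$ (because $x_1\in G$), one gets $\|w^*w\|=\sup\{1-\tfrac1t:t\in\sigma(x_1^*x_1)\}<1$, exactly as in Lemma \ref{lema48}. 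As $\|ww^*\|=\|w^*w\|$ in a C$^*$-algebra, this yields $\|x_2(x_1^*x_1)^{-1}x_2^*\|<1$, so $1-x_2(x_1^*x_1)^{-1}x_2^*$ has spectrum in $(0,1]$ and is therefore positive and invertible. Combined with the reduction above, the lemma follows.

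The only genuinely delicate point is the passage from $w^*w$ to $ww^*$: the expression $\theta(\yy_0,\yy_0)$ involves $x_2(\cdots)x_2^*$ rather than $x_2^*(\cdots)x_2$, so the bound for $1-z^*z$ used in Lemma \ref{lema48} is not directly available, and one must transfer the estimate through the equality of norms (equivalently, of nonzero spectra) of $ww^*$ and $w^*w$. Everything else is a routine C$^*$-module computation.
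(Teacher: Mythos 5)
Your proof is correct and follows essentially the same route as the paper: both pass to a normalized generator in $\k_\theta$, use the explicit $\yy_0$, reduce via $\theta(\yy_0 a,\yy_0 a)=a^*\theta(\yy_0,\yy_0)a$, and obtain $\|x_2(x_1^*x_1)^{-1}x_2^*\|<1$ from $x_2^*x_2=x_1^*x_1-1$ and $\sigma(x_1^*x_1)\subset(0,+\infty)$ --- the $w^*w$-to-$ww^*$ transfer you single out is handled in the paper by quoting $\|x_2x_1^{-1}\|<1$ from Lemma \ref{lema48}, which gives $\|(x_2x_1^{-1})(x_2x_1^{-1})^*\|<1$ directly, so your norm-equality step is the same estimate in a slightly more self-contained form. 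One small caveat: your parenthetical that $a^*\left(-\theta(\yy_0,\yy_0)\right)a\in G^+$ holds \emph{precisely} for $a\in G$ is inaccurate --- for $c\in G^+$ one has $a^*ca\in G^+$ if and only if $a$ is left invertible (e.g.\ any isometry in $\b(\h)$ qualifies without being a generator) --- but this concerns only the reading of the lemma's loosely stated conclusion, not the substance of your argument, which matches the paper's.
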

\begin{proof}
Recall that $[\xx_0]\in\pa^\theta$ means that for any generator (e.g. $\xx_0=\vecx$), it holds that $x_1\in G$ and $\theta(\xx_0,\xx_0)\in G^+$. Put $\yy_0=\left( \begin{array}{c} (x_1^*)^{-1}x_2^* \\ 1 \end{array} \right)$ as above.
Then, for any $\yy=\yy_0 a\in[\xx_0]^\perp$,
$$
\theta(\yy,\yy)=a^*\theta(\yy_0,\yy_0) a,
$$
and 
$$
\theta(\yy_0,\yy_0)=x_2x_1^{-1}(x_1^*)^{-1}x_2^*-1=x_2x_1^{-1}(x_2x_1^{-1})^*-1.
$$
From Lemma \ref{lema48} and Proposition \ref{prop49}, we get that $x_2x_1^{-1}\in\d$, i.e., $\|x_2x_1^{-1}\|<1$; thus, $1-x_2x_1^{-1}(x_2x_1^{-1})^*\in G^+$.
\end{proof}

For $[\xx_0]\in\pa^\theta$, the tangent space $(T\pa^\theta)_{[\xx_0]}=\a^2 /[\xx_0]$ is isomorphic to any supplement of $[\xx]$ in $\a^2$. We choose to identify 
\begin{equation}\label{tangente}
(T\pa^\theta)_{[\xx_0]}\simeq [\xx_0]^{\perp_\theta}.
\end{equation}
\begin{rem}\label{como cambia}
A remark is in order. If $\ell=[\xx_0]\in\pa^\theta$, the identification of $(T\pa^\theta)_\ell$  with $\ell^{\perp_\theta}$ depends on the choice of the generator $\xx_0$. In order to see how the change of generator affects this identification, we must refer to an intrinsic model for $\pa^\theta$. We  choose the model $\q_\rho$ described in the first section. Also, to keep matters more simple, consider generators in $\k_\theta$, i.e. $\xx_0$ satisfies $\theta(\xx_0,\xx_0)=1$. If $\ell=q_\ell$ (the unique $\theta$-orthogonal projection onto $\ell$) then 
$$
(T\pa^\theta)_\ell=(T\q_\rho)_{q_\ell}=\{X\in M_2(\a): X \hbox{ is } \theta-\hbox{symmetric and } \rho-\hbox{co-diagonal}\}.
$$
 If $\xx'_0\in\k_\theta$ is another generator of $\ell$, then there exists a unitary $u\in\u_\a$ such that $\xx_0'=\xx_0u$. 
In the identification between $(T\pa^\theta)_\ell$ and $\a^2/\ell$ done by means of $\xx_0$, the tangent vector $X$ is identified with  $X\xx_0\in\ell^{\perp_\theta}$.

Thus, if we change to $\xx'_0$, both identifications differ in right multiplication by $u$.  
\end{rem}
\section{The geometry of  the disk}
 
In the previous section we introduced an $\u(\theta)$-invariant Finsler metric in the hyperbolic part of the $\a$-projective line $\pa^\theta$ induced by the quadratic form $\theta$.
Also, we noted that there is a natural diffeomorphism $[\xx]\mapsto x_2x_2^{-1}$ between $\pa^\theta$ and the unit disk $\d$ of $\a$

In \cite{tejemas}, we introduced a (nonpositively curved) metric in $\d$, by establishing in turn an identification between $\d$ and a space of positive operators related to the symmetry $\rho$ (related to $\theta$).  Let us briefly describe it:
\begin{rem}\label{tejemasbisbis}(several results taken from \cite{tejemas})
\begin{enumerate}
\item
$\d$ is embedded in the space $Gl_2(\a)^+$  of positive elements in $Gl_2(\a)$ by means of the map
$$
\Phi_\d:\d \to GL_2(\a)^+ \ , \  \ \Phi_\d(a)=\left( \begin{array}{cc} 2(1-a^*a)^{-1}-1 & -2(1-a^*a)^{-1}a^*
\\ -2a(1-a^*a)^{-1} & 2a(1-a^*a)^{-1}a^* +1 \end{array} \right)
$$
$$
=-\rho+2 \left( \begin{array}{cc} (1-a^*a)^{-1} & 0 \\ 0 & (1-aa^*)^{-1} \end{array} \right)\left( \begin{array}{cc} 1  & -a^* \\ -a & aa^* \end{array} \right).
$$
For the last equality, we use that $a(1-a^*a)^{-1}=(1-aa^*)^{-1}a$.
Also note that
$$
\Phi_\d(a)=\left( \begin{array}{cc} (1-a^*a)^{-1} & 0 \\ 0 & (1-aa^*)^{-1} \end{array} \right)\left( \begin{array}{cc} 1+a^*a  & -2a^* \\ -2a & 1+aa^* \end{array} \right)
$$
where both matrices commute.

\item
Therefore, given two points $z_0 ,z_1\in\d$ there exists a unique geodesic joining them. In \cite{tejemas} we computed the velocity of this unique geodesic in the case $z_0=0$ and $z_1=z$ is an arbitrary element of $\d$. The geodesic is given by
$$
\delta(t)=e^{t\left(\begin{array}{cc} 0 & \alpha^* \\ \alpha & 0 \end{array}\right)} \cdot 0,
$$ 
where
\begin{equation}\label{formula alfa}
\alpha=z\sum_{k=0}^\infty \frac{1}{2k+1} (z^*z)^k,
\end{equation}
satisfies that $\delta(0)=0$ and $\delta(1)=z$. Note here that the series $\sum_{k=0}^\infty \frac{t^{2k+1}}{2k+1}$ corresponds, in the interval $(-1,1)$, to the function $f(t)=\frac12\log(\frac{1+t}{1-t})$. We shall compute below the explicit form of $\delta$ in $\d$.
\item
The norm of $(T\d)_0$ is the usual norm of $\a$.

\item
Recall from Proposition \ref{tejemas} the hyperboloid $\k_\theta$. We have the following conmutative diagram:
\begin{equation}\label{diagrama}
\xymatrix{
& \k_\theta \ar[ld]_{\hat{\pi_\theta }}\ar[rd]^{\tilde{\pi}_\theta} \\
\pa^\theta \ar[rr]^{\simeq}
&& \d ,
}
\end{equation}
where the map $\k_\theta\to\pa^\theta$ is the restriction of the quotient map ($\oo\to\pa$). The group $\u(\theta)$ acts on the three spaces, and the maps are equivariant with respect to the action.
\end{enumerate}
\end{rem}
Let us compute explicitly the form of the geodesic $\delta$ joining $0$ and $z\in\d$ at time $t=1$:
\begin{lem}\label{lema62}
Given $z\in\d$, the unique geodesic $\delta$ of $\d$ with $\delta(0)=0$ and $\delta(1)=z$ is given by
\begin{equation}\label{delta}
\delta(t)= \omega \tanh(t|\alpha|),
\end{equation}
where  $\alpha$ is given in {\rm (\ref{formula alfa})} above, and $\omega$ is the partial isometry in the polar decomposition of $\alpha$: $\alpha=\omega |\alpha|$, performed in $\a^{* *}$. 
\end{lem}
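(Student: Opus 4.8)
The plan is to take the formula for the geodesic recorded in Remark~\ref{tejemasbisbis}(2),
$$
\delta(t)=e^{tX}\cdot 0, \qquad X=\left(\begin{array}{cc} 0 & \alpha^* \\ \alpha & 0 \end{array}\right),
$$
and simply compute the right-hand side in closed form. Everything reduces to evaluating the matrix exponential $e^{tX}$ and then applying the action formula (\ref{accion en D}) at the point $0$, which reads $\tilde g\cdot 0=g_{21}g_{11}^{-1}$. Note that uniqueness of the geodesic and the fact that it is given by $e^{tX}\cdot 0$ are already available, so the lemma is purely a computation.

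First I would compute the powers of $X$. Writing $p=\alpha^*\alpha=|\alpha|^2$ and $q=\alpha\alpha^*=|\alpha^*|^2$, one checks inductively, using $\alpha^*\alpha\alpha^*=p\alpha^*$ and $\alpha\alpha^*\alpha=q\alpha$, that $X^{2k}=\mathrm{diag}(p^k,q^k)$ and $X^{2k+1}=\left(\begin{array}{cc} 0 & \alpha^* q^k \\ \alpha p^k & 0 \end{array}\right)$. Summing the even and odd parts of the exponential series, the diagonal entries resum to $\cosh(t|\alpha|)$ and $\cosh(t|\alpha^*|)$, while the lower-left entry is $\sum_k \frac{t^{2k+1}}{(2k+1)!}\alpha p^k=\alpha\,h(|\alpha|)$, where $h(s)=\sinh(ts)/s$ is the continuous function resumming the series (with $h(0)=t$). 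Introducing the polar decomposition $\alpha=\omega|\alpha|$ performed in $\a^{**}$, and using $|\alpha|h(|\alpha|)=\sinh(t|\alpha|)$, this entry equals $\omega\sinh(t|\alpha|)$; symmetrically the upper-right entry equals $\omega^*\sinh(t|\alpha^*|)$. Hence
$$
e^{tX}=\left(\begin{array}{cc} \cosh(t|\alpha|) & \omega^*\sinh(t|\alpha^*|) \\ \omega\sinh(t|\alpha|) & \cosh(t|\alpha^*|) \end{array}\right).
$$

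Applying (\ref{accion en D}) at $0$ then gives
$$
\delta(t)=g_{21}g_{11}^{-1}=\omega\sinh(t|\alpha|)\,\cosh(t|\alpha|)^{-1}=\omega\tanh(t|\alpha|),
$$
the inverse existing because $\cosh\ge 1$, and $\sinh(t|\alpha|)$, $\cosh(t|\alpha|)^{-1}$ commuting as functions of $|\alpha|$. The only delicate point is that $|\alpha|$ need not be invertible and $\omega$ lives a priori only in $\a^{**}$; this is handled by absorbing $|\alpha|$ into $\alpha$ via the continuous function $k(s)=\tanh(ts)/s$, giving $\delta(t)=\alpha\,k(|\alpha|)\in\a$, as it must.

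Finally, to confirm $\delta(1)=z$ I would identify $\alpha$ explicitly: since $\mathrm{arctanh}(s)=\sum_k s^{2k+1}/(2k+1)$, formula (\ref{formula alfa}) yields $\alpha=\omega\,\mathrm{arctanh}(|z|)$, where $\omega$ is the partial isometry of $z$ (the same $\omega$, since $\mathrm{arctanh}(|z|)\ge 0$ has the support of $|z|$). Thus $|\alpha|=\mathrm{arctanh}(|z|)$ and $\delta(1)=\omega\tanh(\mathrm{arctanh}(|z|))=\omega|z|=z$. The main obstacle is not conceptual but bookkeeping: keeping the functional calculus consistent when $|\alpha|$ is non-invertible, and checking that the one partial isometry $\omega$ serves for both $z$ and $\alpha$.
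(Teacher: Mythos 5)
Your proposal is correct and follows essentially the same route as the paper: computing the even and odd powers of $\left(\begin{array}{cc} 0 & \alpha^* \\ \alpha & 0 \end{array}\right)$, resumming the exponential series to identify the first column as $\left(\begin{array}{c}\cosh(t|\alpha|)\\ \omega\sinh(t|\alpha|)\end{array}\right)$, and applying the action at $0$ to get $\delta(t)=\omega\tanh(t|\alpha|)$. Your two additional checks are sound but not new work for this lemma: the membership $\delta(t)\in\a$ is exactly the remark the paper makes immediately after the proof, and the identification $\alpha=\omega\,\mathrm{arctanh}(|z|)$ with $\omega$ the partial isometry of $z$ (hence $\delta(1)=z$) is the content of the proposition that follows Lemma \ref{lema62} in the paper, while in the lemma itself $\delta(1)=z$ is already granted by the citation to Remark \ref{tejemasbisbis}.
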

\begin{proof}
Straightforward computations show that the even and odd powers of $\left(\begin{array}{cc} 0 & \alpha^* \\ \alpha & 0 \end{array}\right)$ are, respectively
$$
\left(\begin{array}{cc} 0 & \alpha^* \\ \alpha & 0 \end{array}\right)^{2k}=\left(\begin{array}{cc} (\alpha^*\alpha)^k & 0 \\ 0 & (\alpha\alpha^*)^k \end{array}\right) \hbox{ and } \left(\begin{array}{cc} 0 & \alpha^* \\ \alpha & 0 \end{array}\right)^{2k+1}= \left(\begin{array}{cc} 0 & \alpha^* (\alpha\alpha^*)^k\\ \alpha(\alpha^*\alpha)^k & 0 \end{array}\right).
$$
We are interested in the first column of $e^{t\left(\begin{array}{cc} 0 & \alpha^* \\ \alpha & 0 \end{array}\right)}$, which is
$$
\left( \displaystyle{ \begin{array}{l} \sum_{k=0}^\infty \frac{t^{2k}}{(2k)!} (\alpha^*\alpha)^k \\ \alpha\sum_{k=0}^\infty \frac{t^{2k+1}}{(2k+1)!} (\alpha^*\alpha)^k\end{array}}\right)= \left( \begin{array}{l} \cosh(t|\alpha|) \\ \omega \sinh(t|\alpha|) \end{array}\right).
$$
Then 
$$
\delta(t)=\omega \sinh(t|\alpha|)(\cosh(t|\alpha|))^{-1}=\omega \tanh(t|\alpha|).
$$
\end{proof}
Notice that $\omega\in\a^{**}$ need not belong to $\a$. However $\delta(t)\in\a$ for all $t$.

Let us relate the polar decompositions of $z$ and $\alpha$.
\begin{prop}
If $z\in\d$ and $\alpha$ as in {\rm (\ref{formula alfa})}, then
$$
\alpha=\frac12\omega (\log(1+|z|)-\log(1-|z|))  ,  \ \ \hbox{ and } \  \  \  z=\omega |z|,
$$
i.e., the partial isometry $\omega\in\a^{**}$ is the same for $\alpha$ and $z$.
\end{prop}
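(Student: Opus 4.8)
The plan is to substitute the polar decomposition $z=\omega|z|$ directly into the series defining $\alpha$ and to interpret the resulting expression through the continuous functional calculus applied to the positive element $|z|=(z^*z)^{1/2}$. Since $\|z\|<1$, the spectrum of $|z|$ is a compact subset of $[0,1)$, so every power series in $t$ with radius of convergence at least $1$ may be evaluated at $|z|$, and the outcome lies in the (commutative) C$^*$-algebra generated by $|z|$.

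First I would observe that $z^*z=|z|^2$, so the scalar factor in $\alpha$ is a function of $|z|$:
$$
\sum_{k=0}^\infty \frac{1}{2k+1}(z^*z)^k=\sum_{k=0}^\infty \frac{1}{2k+1}|z|^{2k}=h(|z|),
$$
where $h(t)=\sum_{k=0}^\infty \frac{t^{2k}}{2k+1}$ converges on $(-1,1)$. Using $z=\omega|z|$ and the fact that $|z|$ commutes with $h(|z|)$ (both are functions of $|z|$), I obtain
$$
\alpha=\omega\,|z|\,h(|z|)=\omega\sum_{k=0}^\infty\frac{1}{2k+1}|z|^{2k+1}=\omega f(|z|),
$$
where $f(t)=t\,h(t)=\sum_{k=0}^\infty\frac{t^{2k+1}}{2k+1}=\frac12\log\frac{1+t}{1-t}$ on $(-1,1)$, as recalled after (\ref{formula alfa}). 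This already gives the stated formula $\alpha=\frac12\omega\bigl(\log(1+|z|)-\log(1-|z|)\bigr)$, provided one knows that $\omega$ is the correct partial isometry and that $f(|z|)=|\alpha|$.

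The main point — and the only step that is not a formal manipulation — is to check that $\omega f(|z|)$ is genuinely the polar decomposition of $\alpha$, so that the partial isometry of $\alpha$ coincides with that of $z$. Here I would use that $f$ is continuous, nonnegative and strictly increasing on $[0,1)$ with $f(0)=0$; hence $f(t)=0$ precisely when $t=0$, which forces $f(|z|)\ge 0$ to have the same kernel, and therefore the same support projection, as $|z|$. Since $z=\omega|z|$ is the polar decomposition in $\a^{**}$, the projection $\omega^*\omega$ equals the support projection of $|z|$, hence also that of $f(|z|)$. Consequently $\alpha^*\alpha=f(|z|)\,\omega^*\omega\,f(|z|)=f(|z|)^2$, giving $|\alpha|=f(|z|)$, while $\omega$ is a partial isometry whose initial space matches the range of $f(|z|)$. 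This is exactly the uniqueness condition for the polar decomposition $\alpha=\omega|\alpha|$, so the partial isometry of $\alpha$ is the same $\omega$ as for $z$. I expect the support-projection bookkeeping in $\a^{**}$ to be the only delicate part; the series identity itself is immediate from the functional calculus.
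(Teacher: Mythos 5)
Your proof is correct and takes essentially the same route as the paper: substitute the polar decomposition $z=\omega|z|$ into the series, use functional calculus to write $\alpha=\omega f(|z|)$ with $f(t)=\frac12\log\frac{1+t}{1-t}$, and conclude by uniqueness of the polar decomposition in $\a^{**}$. The only (harmless) divergence is in how uniqueness is invoked: the paper matches both the initial and final spaces of the two partial isometries, proving $N(z)=N(\alpha)$ and $\overline{R(z)}=\overline{R(\alpha)}$ by means of the invertible element $g(|z|)$, where $g(t)=f(t)/t$ extended by $g(0)=1$, whereas you verify only that $\omega^*\omega$ equals the support projection of $f(|z|)$ (using that $f$ vanishes on $[0,1)$ exactly at $t=0$), which already suffices for the standard uniqueness statement and is slightly more economical.
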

\begin{proof}
First note that 
$$
|\alpha|^2=\alpha^*\alpha=\sum_{k=0}^\infty \frac{1}{2k+1} (z^*z)^k z^*z\sum_{k=0}^\infty \frac{1}{2k+1} (z^*z)^k=\left(\sum_{k=0}^\infty \frac{1}{2k+1} |z|^{2k+1}\right)^2,
$$
i.e., $|\alpha|=\frac12\left(\log(1+|z|) - \log(1-|z|)\right)$.  Next, put $z=\mu|z|$ the polar decomposition of $z$. Note that,  since $\alpha=z\sum_{k=0}^\infty \frac{1}{2k+1} |z|^{2k}$, we have that
$$
\alpha=\mu |z| \sum_{k=0}^\infty \frac{1}{2k+1} |z|^{2k}=\mu |\alpha|.
$$
Thus, in order to prove our claim, it suffices to show that both partial isometries $\mu, \omega$ have the same initial and final spaces (the result follows, then, by the uniqueness property of the polar decomposition).
The partial isometry $\mu$ maps $N(z)^\perp=N(|z|)^\perp$  onto $\overline{R(z)}$, whereas $\omega$ maps $N(\alpha)^\perp$ onto $\overline{R(\alpha)}$. If $z\xi=0$, 
then
$$
\alpha \xi=z\sum_{k=0}^\infty \frac{1}{2k+1} (z^*z)^k \xi= \sum_{k=0}^\infty \frac{1}{2k+1} (zz^*)^kz\xi=0,
$$
i.e., $N(z)\subset N(\alpha)$. Conversely, in the last expression of $\alpha$,
$\alpha= \sum_{k=0}^\infty \frac{1}{2k+1} |z^*|^{2k}z$; observe that 
$$
\sum_{k=0}^\infty \frac{1}{2k+1} |z^*|^{2k}=g(|z^*|),
$$
where $g(t)=\frac{1}{2t}(\log(1+t)-\log(1-t))$ (which can be extended continuously as $g(0)=1$), is defined in $\sigma(|z^*|)\subset [0,1)$, and is nonvanishing there. Therefore, $g(|z^*|)$ is invertible. Thus, $\alpha\xi=0$ implies $z\xi=0$.  Again, using the function $g$, we get $\alpha=z g(|z|)$, and thus $R(z)=R(\alpha)$.
\end{proof}
\begin{coro}\label{64}
The exponential map $\Exp_0$ of $\d$ at $0$, and its inverse $\Log_0$ can be written explicitly  as follows:  if $z=\omega |z|\in\d$
$$
\Log_0:\d\to (T\d)_0 \ , \ \Log_0(z) \frac12 \omega \log\left( (1+|z|)(1-|z|)^{-1}\right).
$$
If $\alpha=\omega|\alpha|\in\a\simeq(T\d)_0$, then
$$
\Exp_0:(T\d)_0\to \d \ , \ \Exp_0(\alpha)=\omega \tanh(|\alpha|).
$$
In particular, if $\alpha=\Log_0(z)$ (or, equivalently, $z=\Exp_0(\alpha)$), then $z$ and $\alpha$ have the same partial isometry in the polar decomposition. Also,
$$
|\Exp_0(\alpha)|=\tanh(|\alpha|)  \ \hbox{ and } \ |\Log_0(z)|=\frac12\log\left( (1+|z|)(1-|z|)^{-1}\right).
$$
\end{coro}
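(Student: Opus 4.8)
The plan is to obtain both formulas by directly transcribing Lemma \ref{lema62} and the Proposition that immediately precedes this corollary, after identifying $\Exp_0$ as the time-one map of the geodesics emanating from $0$.

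First I would recall that, by definition, the exponential map sends a tangent vector $\alpha\in(T\d)_0\simeq\a$ to the value at $t=1$ of the unique geodesic $\delta$ with $\delta(0)=0$ and initial velocity $\dot\delta(0)=\alpha$. Lemma \ref{lema62} exhibits, for each $z\in\d$, the unique geodesic with $\delta(0)=0$ and $\delta(1)=z$ in the form $\delta(t)=\omega\tanh(t|\alpha|)$, where $\alpha=\omega|\alpha|$ is built from $z$ via formula (\ref{formula alfa}). The next step is to check that $\alpha$ is indeed this geodesic's initial velocity: differentiating $\tanh(t|\alpha|)$ at $t=0$ and using $\tanh'(0)=1$ yields $\dot\delta(0)=\omega|\alpha|=\alpha$. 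Consequently $\Exp_0(\alpha)=\delta(1)=\omega\tanh(|\alpha|)$, which is the claimed formula for $\Exp_0$.

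For the inverse map, given $z=\omega|z|\in\d$ the preceding Proposition computes $\alpha=\tfrac12\omega\big(\log(1+|z|)-\log(1-|z|)\big)$ and establishes that $z$ and $\alpha$ share the same partial isometry $\omega$. Since $\Log_0$ is by construction the inverse of $\Exp_0$, this is exactly the identity $\Log_0(z)=\tfrac12\omega\log\big((1+|z|)(1-|z|)^{-1}\big)$, and the assertion that $z$ and $\Log_0(z)$ have a common partial isometry is immediate from the same Proposition.

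Finally I would read off the modulus formulas $|\Exp_0(\alpha)|=\tanh(|\alpha|)$ and $|\Log_0(z)|=\tfrac12\log\big((1+|z|)(1-|z|)^{-1}\big)$. The only point that needs attention is that $\omega$ lives in $\a^{**}$, so one verifies $|\omega\tanh(|\alpha|)|=\tanh(|\alpha|)$ by noting that $\omega^*\omega$ is the support projection of $|\alpha|$, which coincides with that of $\tanh(|\alpha|)$; hence $\tanh(|\alpha|)\omega^*\omega\tanh(|\alpha|)=\tanh(|\alpha|)^2$ and the modulus is $\tanh(|\alpha|)$, and likewise for $\Log_0$. This bookkeeping of support projections in the bidual is the main, though entirely routine, obstacle; the rest is a direct transcription of the two results established just above.
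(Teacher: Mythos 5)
Your proposal is correct and follows exactly the route the paper intends: the corollary is stated without proof precisely because it is a direct transcription of Lemma \ref{lema62} and the preceding proposition on polar decompositions, which is what you do. Your two added verifications --- that $\dot\delta(0)=\omega|\alpha|=\alpha$ identifies $\Exp_0$ as the time-one map, and that $\omega^*\omega$ is the common support projection of $|\alpha|$ and $\tanh(|\alpha|)$ (and likewise of $|z|$ and $\frac12\log\left((1+|z|)(1-|z|)^{-1}\right)$) so the modulus formulas hold in $\a^{**}$ --- are exactly the routine checks the paper leaves implicit.
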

\section{Limit points of geodesics}
One of our  concerns in computing the geodesic $\delta$, and the above results on the polar decomposition, is to establish the following result:
\begin{teo}
For $z\in\d$, let $\delta$ be the unique geodesic of $\d$ such that $\delta(0)=0$ and $\delta(1)=z$. Put $z=\omega |z|$ the polar decomposition (i.e., $\omega\in\a^{**}$); then
$$
{\rm SOT}-\lim_{t\to\infty} \delta(t)=\omega \ \ \hbox{ and } \ \ {\rm SOT}-\lim_{t\to-\infty} \delta(t)=-\omega .
$$
\end{teo}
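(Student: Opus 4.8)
The plan is to read everything off the closed form of the geodesic already obtained in Lemma~\ref{lema62}, and then to analyze a single function of a positive operator in the strong operator topology. Recall that Lemma~\ref{lema62} gives $\delta(t)=\omega\tanh(t|\alpha|)$, where $\alpha=\omega|\alpha|$ is the polar decomposition carried out in $\a^{**}$ and, by the preceding proposition, $\omega$ is exactly the partial isometry of $z$, with $|\alpha|=\frac12\log\bigl((1+|z|)(1-|z|)^{-1}\bigr)$. Since $\|z\|<1$, the spectrum of $|\alpha|$ is a compact subset of $[0,M]$ with $M=\frac12\log\bigl((1+\|z\|)(1-\|z\|)^{-1}\bigr)$, so $\tanh(t|\alpha|)$ is a well-defined element of $\a^{**}$ for every $t$, obtained by continuous functional calculus.

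First I would identify the candidate limit. Let $s$ denote the support projection of $|\alpha|$, i.e.\ the orthogonal projection onto $\overline{R(|\alpha|)}=N(|\alpha|)^{\perp}$. Because $\omega$ is the partial isometry of $\alpha$, one has $\omega^*\omega=s$ and $\omega s=\omega\omega^*\omega=\omega$. Thus it suffices to prove the operator-level statement
$$
{\rm SOT}-\lim_{t\to+\infty}\tanh(t|\alpha|)=s,\qquad {\rm SOT}-\lim_{t\to-\infty}\tanh(t|\alpha|)=-s,
$$
since left multiplication by the fixed bounded operator $\omega$ is strongly continuous, and $\omega s=\omega$, $\omega(-s)=-\omega$.

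The core of the argument is this strong limit, which I would establish by the spectral theorem in the universal representation. Writing $f_t(\lambda)=\tanh(t\lambda)$ for $\lambda\ge 0$, note that $f_t(0)=0$ for all $t$ while $f_t(\lambda)\to 1$ as $t\to+\infty$ for each $\lambda>0$, and that $|f_t|\le 1$. Fix a vector $\xi$ in the representation space and split it as $\xi=\xi_0+\xi_1$ with $\xi_0\in N(|\alpha|)$ and $\xi_1\in\overline{R(|\alpha|)}$. On $\xi_0$ both $f_t(|\alpha|)$ and $s$ vanish; on $\xi_1$ the spectral measure $\mu_{\xi_1}$ gives no mass to $\{0\}$, so by dominated convergence (the integrands $(f_t(\lambda)-1)^2$ are bounded by $1$ and tend to $0$ pointwise on $(0,M]$)
$$
\|f_t(|\alpha|)\xi_1-\xi_1\|^2=\int (f_t(\lambda)-1)^2\,d\mu_{\xi_1}(\lambda)\longrightarrow 0.
$$
Hence $f_t(|\alpha|)\xi\to s\xi$, which is the asserted strong convergence as $t\to+\infty$; the case $t\to-\infty$ is identical, using $f_t(\lambda)\to -1$ for $\lambda>0$. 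The only delicate point is precisely this strong (not norm) convergence: since the spectrum of $|\alpha|$ may accumulate at $0$, the net $\tanh(t|\alpha|)$ does not converge in norm, and the argument must separate the kernel of $|\alpha|$—on which $\delta(t)$ stays at $0$—from its support, where $\tanh(t|\alpha|)$ saturates to $\pm 1$.
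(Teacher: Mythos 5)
Your proof is correct and follows essentially the same route as the paper: both read the limit off the closed form $\delta(t)=\omega\tanh(t|\alpha|)$ from Lemma~\ref{lema62}, identify the strong limit of $\tanh(t|\alpha|)$ as the support projection $P_{N(\alpha)^\perp}$ via bounded (dominated) convergence applied to the spectral measure, and conclude using $\omega P_{N(\alpha)^\perp}=\omega$. Your version merely writes out explicitly the vector splitting $\xi=\xi_0+\xi_1$ that the paper compresses into the phrase ``Lebesgue's bounded convergence theorem and the Borel functional calculus.''
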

\begin{proof}
By formula (\ref{delta}), we only need to compute the limit of $\tanh(t|z|)$ when $t\to \pm\infty$. The spectrum $\sigma(|z|)$ is contained in $[0,1)$. Clearly, for any $s\in[0,1)$,  
$$
\lim_{t\to\infty} \tanh(ts)=\left\{ \begin{array}{l} 1 \ \hbox{ if } s\in(0,1) \\ 0 \ \hbox{ if } s=0 .  \end{array}\right.
$$
By Lebesgue's bounded convergence theorem, and the Borel functional calculus for bounded selfadjoint operators, we have that
$$
\lim_{t\to\infty} \tanh(t|\alpha|)=\chi_{(0,1)}(|\alpha|)=P_{N(\alpha)^\perp}.
$$
Then, 
$$
\lim_{t\to\infty} \delta(t)=\omega P_{N(\alpha)^\perp}=\omega.
$$
Similarly, using that $\lim_{t\to-\infty} \tanh(st)=\left\{ \begin{array}{c}\  -1  \ \ \hbox{ if } s\in(0,1) \\ 0 \ \hbox{ if } s=0   \end{array}\right.$, we get that
$$
\lim_{t\to-\infty} \delta(t)=-\omega P_{N(\alpha)^\perp}=-\omega.
$$
\end{proof}

This geometric role of the partial isometry $\omega$ in the polar decomposition of $z \in \d$ (or, more generally, of every  $z \in \a\setminus\{0\}$) has not been noticed before, to the authors' knowledge.

In order to compute the limit points of arbitrary geodesics, it will be useful to extend the action of  $\u(\theta)$ to the strong operator border
$$
\partial \d:=\{a\in\a^{**}: \|a\|= 1\}
$$ 
of $\d$,  i.e., to define $\tilde{g}\cdot a$ for $a\in\a^{**}$ with $\|a\|=1$.
\begin{lem}
If $\tilde{g}\in\u(\theta)$ and  $a\in\partial \d$, then $g_{11}+g_{12}a$ is invertible in $\a^{**}$. 
\end{lem}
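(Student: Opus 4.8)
The plan is to factor
$g_{11}+g_{12}a=g_{11}\bigl(1+g_{11}^{-1}g_{12}\,a\bigr)$
and to check that each factor is invertible in $\a^{**}$: the first because $g_{11}$ is already invertible in $\a$, and the second by a Neumann series, once we know that the coefficient $w:=g_{11}^{-1}g_{12}$ is a strict contraction, $\|w\|<1$. Since $\|a\|=1$, this yields $\|wa\|\le\|w\|<1$, so $1+wa$ is invertible in $\a^{**}$, and the claim follows.

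First I would establish that $g_{11}\in G$. Writing $\tilde{g}^*\rho\tilde{g}=\rho$ entrywise, the $(1,1)$ entry reads $g_{11}^*g_{11}-g_{21}^*g_{21}=1$, so $g_{11}^*g_{11}=1+g_{21}^*g_{21}\ge 1$ is invertible. Since $\u(\theta)$ is a group, $\tilde{g}^{-1}=\rho\tilde{g}^*\rho$ also lies in $\u(\theta)$, equivalently $\tilde{g}\rho\tilde{g}^*=\rho$; its $(1,1)$ entry gives $g_{11}g_{11}^*=1+g_{12}g_{12}^*\ge 1$, likewise invertible. As both $g_{11}^*g_{11}$ and $g_{11}g_{11}^*$ are invertible in $\a$, the element $g_{11}$ is invertible, and its inverse in $\a$ is still an inverse in $\a^{**}$.

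It remains to prove $\|w\|<1$. The cleanest route uses the action (\ref{accion en D}) of $\u(\theta)$ on $\d$: from the entries of $\tilde{g}^{-1}=\rho\tilde{g}^*\rho$ one computes $\tilde{g}^{-1}\cdot 0=-g_{12}^*(g_{11}^*)^{-1}=-(g_{11}^{-1}g_{12})^*=-w^*$. Since $\u(\theta)$ preserves $\d$ (Theorem \ref{transitiva} and Proposition \ref{prop49}), this point lies in $\d$, whence $\|w\|=\|w^*\|<1$. Alternatively, a direct computation using $g_{11}g_{11}^*=1+g_{12}g_{12}^*$ gives $w^*w=g_{12}^*(1+g_{12}g_{12}^*)^{-1}g_{12}=(1+g_{12}^*g_{12})^{-1}g_{12}^*g_{12}$; setting $s=g_{12}^*g_{12}\ge 0$ this equals $s(1+s)^{-1}$, whose spectrum is $\{t/(1+t):t\in\sigma(s)\}\subset[0,1)$, so $\|w\|^2=\|s\|/(1+\|s\|)<1$ because $\sigma(s)$ is compact.

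The main obstacle — and the reason the statement holds — is precisely the \emph{strict} inequality $\|w\|<1$ rather than $\|w\|\le 1$: for a boundary element $a$ with $\|a\|=1$ there is no slack in $\|wa\|\le\|w\|$, so a nonstrict bound would not suffice to invert $1+wa$. The strictness comes from $\d$ being open (equivalently, from the compactness of $\sigma(g_{12}^*g_{12})$, which keeps $t/(1+t)$ bounded away from $1$), and this is where the hypothesis $\tilde{g}\in\u(\theta)$ is used in full.
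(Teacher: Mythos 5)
Your proof is correct, and it differs from the paper's in a worthwhile way. The paper first reduces to the case $\tilde{g}\ge 0$ via the polar decomposition $\tilde{g}=\tilde{u}|\tilde{g}|$, using (implicitly) that the unitary part of a $\theta$-unitary is diagonal; in that normal form $g_{11}=(1+b^*b)^{1/2}$, $g_{12}=b^*$, it factors $g_{11}+g_{12}a=(1+b^*b)^{1/2}\bigl(1+(1+b^*b)^{-1/2}b^*a\bigr)$ and bounds $\|(1+b^*b)^{-1/2}b^*a\|^2\le\max\{t/(1+t):t\in\sigma(b^*b)\}<1$. Your second computation is exactly this estimate without the reduction: with $s=g_{12}^*g_{12}$ your identity $w^*w=s(1+s)^{-1}$ reproduces the same function $t/(1+t)$ on a compact spectrum, so the analytic core is shared. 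What you do differently, and what it buys: you extract the invertibility of $g_{11}$ directly from the $(1,1)$ entries of \emph{both} relations $\tilde{g}^*\rho\tilde{g}=\rho$ and $\tilde{g}\rho\tilde{g}^*=\rho$ (the second correctly justified via $\tilde{g}^{-1}=\rho\tilde{g}^*\rho\in\u(\theta)$), making the argument self-contained and avoiding the unstated diagonality fact; and your first route to $\|w\|<1$, the identification $\tilde{g}^{-1}\cdot 0=-w^*\in\d$, is a genuinely conceptual shortcut that gets the strict inequality for free from the already-established action of $\u(\theta)$ on the open disk (Theorem \ref{transitiva}, Proposition \ref{prop49}, formula (\ref{accion en D})), with no circularity since only the interior action at $z=0$ is invoked. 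The one thing the paper's reduction buys that yours does not is reuse: the positive normal form and the uniform bound $r^2<1$ in terms of $b$ are recycled verbatim in the subsequent Lemma \ref{lema interno} on strong convergence, whereas your entrywise version would have to be adapted there. Your closing remark correctly isolates why strictness of $\|w\|<1$ is the crux at a boundary point $\|a\|=1$.
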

\begin{proof}
Note that 
$$
g_{11}+g_{12}a=\langle {\bf e_1} , \tilde{g} \left(\begin{array}{l} 1 \\ a \end{array}\right)\rangle.
$$
Using the polar decomposition $\tilde{g}=\tilde{u}|\tilde{g}|$, $\tilde{u}=\left( \begin{array}{cc} u_1 & 0  \\ 0 & u_2 \end{array} \right)$,
$$
\langle {\bf e_1} , \tilde{g} \left(\begin{array}{l} 1 \\ a \end{array}\right)\rangle=u_1^*\langle {\bf e_1} , |\tilde{g}| \left(\begin{array}{l} 1 \\ a \end{array}\right)\rangle,
$$
i.e., we may suppose $\tilde{g}\ge 0$, 
 $\tilde{g}=\left( \begin{array}{cc} (1+b^*b)^{1/2} & b^* \\ b & (1+bb^*)^{1/2} \end{array} \right)$, for $b\in\a$. Then 
$$
g_{11}+g_{12}a=(1+b^*b)^{1/2} +b^*a=(1+b^*b)^{1/2}(1+(1+b^*b)^{-1/2}b^*a).
$$
It suffices to show that $\|(1+b^*b)^{-1/2}b^*a\|<1$.
Note that, since $\|a\|=1$, 
$$
\|(1+b^*b)^{-1/2}b^*a\|^2\le \|(1+b^*b)^{-1/2}b^*\|^2=\|(1+b^*b)^{-1/2}b^*b(1+b^*b)^{-1/2}\|=\max \{f(t): t\in\sigma(b^*b)\},
$$
for $f(t)=\frac{t}{(1+t)}$. Clearly, this number is strictly less than $1$.
\end{proof}
\begin{prop}\label{accion en el borde}
If $a\in\partial \d$, and $\tilde{g}\in\u(\theta)$,  then
$$
\tilde{g}\cdot a := (g_{21}+g_{22}a)(g_{11} +g_{12}a)^{-1}\in\partial \d,
$$
defines a left action of $\u(\theta)$ on $\partial \d$.
\end{prop}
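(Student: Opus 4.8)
The plan is to exploit the $\theta$-invariance of $\tilde g$ together with the previous lemma. Given $a\in\partial\d$, set $\xx=\left(\begin{array}{c} 1 \\ a \end{array}\right)$ with entries in $\a^{**}$, so that $\theta(\xx,\xx)=1-a^*a$. Since $\|a\|=1$, this element is positive but \emph{not} invertible (indeed $1\in\sigma(a^*a)$). Writing $\tilde g\xx=\left(\begin{array}{c} u \\ w \end{array}\right)$ with $u=g_{11}+g_{12}a$ and $w=g_{21}+g_{22}a$, the previous lemma guarantees that $u$ is invertible in $\a^{**}$, so that $\tilde g\cdot a=wu^{-1}$ is well defined, and the defining identity $\tilde g^*\rho\tilde g=\rho$ yields
$$
u^*u-w^*w=\theta(\tilde g\xx,\tilde g\xx)=\theta(\xx,\xx)=1-a^*a.
$$

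First I would check that $\tilde g\cdot a\in\partial\d$. Using the displayed identity,
$$
1-(wu^{-1})^*(wu^{-1})=(u^{-1})^*(u^*u-w^*w)u^{-1}=(u^{-1})^*(1-a^*a)u^{-1}.
$$
The right-hand side is a congruence of the positive element $1-a^*a$ by the invertible element $u^{-1}$, hence positive; this already shows $\|\tilde g\cdot a\|\le 1$. Moreover, congruence by an invertible element neither creates nor destroys invertibility, so $(u^{-1})^*(1-a^*a)u^{-1}$ is positive but not invertible, i.e. $0\in\sigma\big(1-(\tilde g\cdot a)^*(\tilde g\cdot a)\big)$. This forces $\|\tilde g\cdot a\|=1$, so $\tilde g\cdot a\in\partial\d$.

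Next I would verify the action axioms. That the identity acts trivially is immediate from the formula. For associativity, the key observation is that $\tilde g\cdot a$ is the product of the second component of $\tilde g\xx$ with the inverse of the first, a quantity unchanged when $\tilde g\xx$ is multiplied on the right by an invertible scalar. Concretely, given $\tilde g,\tilde h\in\u(\theta)$, write $\tilde h\xx=\left(\begin{array}{c} u' \\ w' \end{array}\right)$, so that $\left(\begin{array}{c} 1 \\ \tilde h\cdot a \end{array}\right)=(\tilde h\xx)(u')^{-1}$. Then
$$
\tilde g\left(\begin{array}{c} 1 \\ \tilde h\cdot a \end{array}\right)=(\tilde g\tilde h\,\xx)(u')^{-1},
$$
and forming the component ratio (which cancels the right factor $(u')^{-1}$) gives $\tilde g\cdot(\tilde h\cdot a)=(\tilde g\tilde h)\cdot a$. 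All inverses appearing here exist because the previous lemma applies to each of $\tilde h$, $\tilde g$ and $\tilde g\tilde h$, all of which lie in the group $\u(\theta)$.

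The main point to be careful about is that the norm of $\tilde g\cdot a$ is \emph{exactly} $1$ rather than possibly strictly smaller: this is precisely where the invertibility of $u$ from the previous lemma enters, through the fact that congruence by an invertible element preserves non-invertibility of a positive element. Everything else is a formal consequence of the $\theta$-invariance and the associativity of matrix multiplication, so I do not expect any serious obstacle beyond this spectral bookkeeping.
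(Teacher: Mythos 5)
Your proof is correct, but it follows a genuinely different route from the paper's. The paper proves this proposition by topology: it first asserts that the M\"obius rule maps the open unit ball of $\a^{**}$ to itself, then takes $a_n\in\d$ with $a_n\to a$ strongly (Kaplansky density), proves an auxiliary lemma that $\tilde{g}\cdot a_n\to\tilde{g}\cdot a$ strongly (via the polar decomposition $\tilde{g}=\tilde{u}|\tilde{g}|$ and a uniform bound on the norms of the inverses $(1+(1+b^*b)^{-1/2}b^*a_n)^{-1}$), concludes $\|\tilde{g}\cdot a\|\le 1$, and rules out $\|\tilde{g}\cdot a\|<1$ by applying $\tilde{g}^{-1}$ and contradicting $a\notin\d$; the action axioms are likewise dispatched ``by similar density arguments.'' You instead argue purely algebraically in $\a^{**}$: the identity $\tilde{g}^*\rho\tilde{g}=\rho$ gives $u^*u-w^*w=1-a^*a$, whence $1-(\tilde{g}\cdot a)^*(\tilde{g}\cdot a)=(u^{-1})^*(1-a^*a)u^{-1}$ is a congruence of a positive non-invertible element, which pins the norm at exactly $1$ by the spectral radius formula; and associativity comes from the homogeneous-coordinates observation that the component ratio of $\tilde{g}\tilde{h}\,\xx$ is insensitive to right multiplication by the invertible $u'$. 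Your spectral step is sound: $\|a\|=1$ does give $1\in\sigma(a^*a)$, and $0\in\sigma(1-b^*b)$ with $1-b^*b\ge 0$ forces $\|b\|=1$. What each approach buys: yours is more elementary, avoids the strong topology entirely, gives the action axioms honestly rather than by gesture, and in fact anticipates the paper's own later computation (the proposition showing $1-a^*a=hqh$ for limit points uses exactly this $\theta$-congruence trick); the paper's density proof, by contrast, yields as a byproduct the strong continuity of the boundary action (its internal Lemma), which is not a formal consequence of your argument and is needed later when SOT-limits of geodesics are pushed forward by $\tilde{g}$ in the subsequent corollary. One small bookkeeping point in your associativity step: applying the lemma to $\tilde{g}$ at the point $\tilde{h}\cdot a$ presupposes $\tilde{h}\cdot a\in\partial\d$, so the norm-one part of your proof must come first (as you have it), or one can sidestep this by noting $U(u')^{-1}$ is invertible because $U$ and $u'$ each are, by the lemma applied to $\tilde{g}\tilde{h}$ and $\tilde{h}$.
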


\begin{proof}
A density argument (or a proof similar as in the previous lemma), shows that if $x\in\a^{**}$ with $\|x\|<1$, then $\tilde{g}\cdot x$, defined as above, also satifies $\|\tilde{g}\cdot x\|<1$, and defines an action on the unit  ball of $\a^{**}$. Let $a\in\partial \d$. Then, by Kaplansky's density theorem, there  exists a sequence $a_n\in\d$ such that $a_n\to a$ in the strong operator topology. 
We claim that 
\begin{lem}\label{lema interno}
 $\tilde{g}\cdot a_n\to \tilde{g}\cdot a$ strongly.
\end{lem}
\begin{proof}
Consider the polar decomposition $\tilde{g}=\tilde{u}|\tilde{g}|$. We check first that $|\tilde{g}|\cdot a_n\to |\tilde{g}|\cdot a$ strongly. As before,
 $\tilde{g}=\left( \begin{array}{cc} (1+b^*b)^{1/2} & b^* \\ b & (1+bb^*)^{1/2} \end{array} \right)$. Then 
 $$
 |\tilde{g}|\cdot a_n=(b+(1+bb^*)^{1/2}a_n)((1+b^*b)^{1/2}+b^*a_n)^{-1}.
 $$
Clearly,  $b+(1+bb^*)^{1/2}a_n\to b+(1+bb^*)^{1/2}a$ and $(1+b^*b)^{1/2}+b^*a_n\to (1+b^*b)^{1/2}+b^*a$ strongly. Moreover, 
$$
((1+b^*b)^{1/2}+b^*a_n)^{-1}=(1+(1+b^*b)^{-1/2}b^*a_n)^{-1}(1+b^*b)^{-1/2}.
$$
Let us show that the norms of these inverses are uniformly bounded. It suffices to see that $\|(1+(1+b^*b)^{-1/2}b^*a_n)^{-1}\|$ are uniformly bounded.
Denote  $d_n=(1+b^*b)^{-1/2}b^*a_n$. Then, as seen above,
$$
\|d_n\|^2\le \max\{f(t): t\in\sigma(b^*b)\}=r^2<1.
$$
Thus, 
$$
\|(1+(1+b^*b)^{-1/2}b^*a_n)^{-1}\|\le \frac{1}{1-r}.
$$
Therefore the inverses $(1+(1+b^*b)^{-1/2}b^*a_n)^{-1}$ converge strongly to $(1+(1+b^*b)^{-1/2}b^*a)^{-1}$. Then, clearly, $c_n:=|\tilde{g}|\cdot a_n$ converge strongly to $c:=|\tilde{g}|\cdot a$. Since $\tilde{u}=\left(\begin{array}{cc} u_1 & 0 \\ 0 & u_2 \end{array} \right)$, it is clear that
$$
\tilde{g}\cdot a_n=\tilde{u}\cdot (|\tilde{g}|\cdot a_n)=u_2c_nu_1^*\to u_2cu_1^*=\tilde{g}\cdot a.
$$ 
\end{proof}

Let us proceed with the proof of Proposition \ref{accion en el borde}. Since $\|a_n\|<1$, we know that $\|\tilde{g}\cdot a_n\|<1$. From Lemma \ref{lema interno}, it follows that $\|\tilde{g}\cdot a\|\le 1$. Suppose that $\|\tilde{g}\cdot a\|<1$. Using again the fact that $\u(\theta)$ acts on $\d$, this would imply that 
 $$
 \tilde{g}^{-1}\cdot (\tilde{g}\cdot a)=a\in\d,
 $$
 a contradiction. Thus, $\|\tilde{g}\cdot a\|= 1$.

The fact that this rule defines, indeed,  a left action, follows from similar density arguments.
\end{proof}
Using this result, we can compute the limit points of arbitrary geodesics. Since the action of $\u(\theta)$ is transitive, given $z_1, z_2\in\d$, there exists $\tilde{g}\in\u(\theta)$ such that $\tilde{g}\cdot z_1=0$. 
\begin{coro}
Let $z_0,z_1\in\d$ and $\tilde{g}\in\u(\theta)$ such that $\tilde{g}\cdot z_0=0$. Let  $\delta$ be the unique geodesic of $\d$ such that $\delta(0)=z_0$ and $\delta(1)=z_1$. Denote by $\dot{\delta}_0$ the initial velocity of $\delta$. Then
$$
{\rm SOT}-\lim_{t\to +\infty} \delta(t)=\tilde{g}\cdot  \omega_0 \hbox{ and } {\rm SOT}-\lim_{t\to -\infty} \delta(t)=\tilde{g}\cdot  (-\omega_0),
$$
where $\omega_0\in\a^{**}$ is the partial isometry in the polar decomposition of $\dot{\delta}_0$: $\dot{\delta}_0=\omega_0|\dot{\delta}_0|$.
\end{coro}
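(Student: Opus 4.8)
The plan is to reduce the assertion to the case of a geodesic issuing from the origin, already settled in the theorem immediately preceding this corollary, and then to transport the resulting limits back by the group action, exploiting that this action extends strong-operator continuously to the border $\partial\d$ (Proposition \ref{accion en el borde} together with Lemma \ref{lema interno}).

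First I would set $\sigma(t):=\tilde{g}\cdot\delta(t)$. Since $\tilde{g}\in\u(\theta)$ acts on $\d$ by transformations that preserve the invariant connection and hence carry geodesics to geodesics, $\sigma$ is the unique geodesic of $\d$ with $\sigma(0)=\tilde{g}\cdot z_0=0$ and $\sigma(1)=\tilde{g}\cdot z_1$; its initial velocity is $\dot{\sigma}_0=d\tilde{g}_{z_0}(\dot{\delta}_0)$. Writing $\dot{\sigma}_0=\mu|\dot{\sigma}_0|$ for the polar decomposition in $\a^{**}$ and applying the preceding theorem to $\sigma$ (a geodesic starting at $0$), I obtain ${\rm SOT}-\lim_{t\to+\infty}\sigma(t)=\mu$ and ${\rm SOT}-\lim_{t\to-\infty}\sigma(t)=-\mu$.

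Next I would recover $\delta$ through $\delta(t)=\tilde{g}^{-1}\cdot\sigma(t)$ and interchange the strong limit with the action. The estimate in the proof of Lemma \ref{lema interno} applies verbatim to any net in the closed unit ball of $\a^{**}$ that converges strongly, so $b_n\to b$ strongly forces $\tilde{g}^{-1}\cdot b_n\to\tilde{g}^{-1}\cdot b$ strongly; taking $b_n=\sigma(t)$ yields ${\rm SOT}-\lim_{t\to\pm\infty}\delta(t)=\tilde{g}^{-1}\cdot(\pm\mu)$, and both limits lie on $\partial\d$ by Proposition \ref{accion en el borde}.

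The one genuinely delicate step is to rewrite $\mu$, the partial isometry of the transported velocity $\dot{\sigma}_0=d\tilde{g}_{z_0}(\dot{\delta}_0)$, in terms of the partial isometry $\omega_0$ of $\dot{\delta}_0$ itself, and to settle which of $\tilde{g}$, $\tilde{g}^{-1}$ survives in the final formula. For this I would differentiate the disk action (\ref{accion en D}) at $z_0$: since $g_{21}+g_{22}z_0=0$ when $\tilde{g}\cdot z_0=0$, the quotient rule collapses to $d\tilde{g}_{z_0}(X)=g_{22}\,X\,(g_{11}+g_{12}z_0)^{-1}=A\,X\,B$, with $A$ and $B$ positive invertible for the balanced choice of $\tilde{g}$ (in the scalar model $A=B=(1-|z_0|^2)^{-1/2}$, so $d\tilde{g}_{z_0}$ is multiplication by the positive number $(1-|z_0|^2)^{-1}$ and $\mu=\omega_0$). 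The main obstacle is precisely to control how this two-sided sandwiching by positive elements acts on the polar part in the noncommutative setting, so that the limits can be written intrinsically in terms of $\omega_0$ and $\tilde{g}$ as stated; the reduction and the interchange of limits are then immediate from the cited results.
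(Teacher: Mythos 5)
You have correctly identified the paper's (implicit) argument: the paper offers no proof of this corollary beyond the sentence preceding it, which is exactly your reduction --- translate $\delta$ to a geodesic $\sigma=\tilde{g}\cdot\delta$ through the origin, apply the preceding theorem, and move the limits back through the boundary action using Proposition \ref{accion en el borde} and Lemma \ref{lema interno} (the lemma applies since $\|\sigma(t)\|<1$ for all $t$). Your first two paragraphs are a complete and correct account of that reduction, and they yield
$$
{\rm SOT}-\lim_{t\to\pm\infty}\delta(t)=\tilde{g}^{-1}\cdot(\pm\mu),\qquad \mu=\hbox{ polar part of } \ \dot{\sigma}_0=g_{22}\,\dot{\delta}_0\,(g_{11}+g_{12}z_0)^{-1}.
$$

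The step you flag as delicate is, however, not closable, because the identity as printed is not literally true --- and this is a defect of the statement, not of your argument. Already for $\a=\CC$: take $z_0=r\in(0,1)$ and $\dot{\delta}_0=i$, so $\omega_0=i$; the forward endpoint of $\delta$ is $\frac{2r+i(1-r^2)}{1+r^2}$, while the balanced $\tilde{g}$ with $\tilde{g}\cdot z_0=0$, namely $\tilde{g}\cdot z=(z-r)(1-rz)^{-1}$, gives $\tilde{g}\cdot\omega_0=\frac{-2r+i(1-r^2)}{1+r^2}$, which is not the limit, whereas $\tilde{g}^{-1}\cdot\mu$ is; so $\tilde{g}$ and $\tilde{g}^{-1}$ are indeed swapped in the printed formula. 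Moreover $\mu\neq\omega_0$ in general: differentiating the balanced $\tilde{g}=\tilde{g}_{z_0}^{-1}$ of Lemma \ref{matrizon} (via the formula in Remark \ref{1/2}) gives $d\tilde{g}_{z_0}(X)=(1-z_0^*z_0)^{-1/2}(1+z_0^*)(1+z_0)^{-1}X(1-z_0^*z_0)^{-1/2}$, and the factor $(1+z_0^*)(1+z_0)^{-1}$ is \emph{not} positive (for scalars it is the unimodular number $e^{-2i\arg(1+z_0)}$); so your parenthetical claim that the sandwiching is by positives for the balanced choice is wrong, and the polar part genuinely moves even in the commutative case. The corollary becomes correct --- and your proof complete --- once one either takes $\tilde{g}\cdot 0=z_0$ and reads $\omega_0$ as the polar part of the initial velocity of the translated geodesic at the origin (equivalently, of $\tilde{g}^{-1}\cdot z_1$), or retains only what the subsequent Remark actually uses: the limit points of any geodesic lie in the $\u(\theta)$-orbit of $\pm$ a partial isometry. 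Since the paper never addresses this identification, your proposal in fact establishes everything the paper's one-line proof does, and the obstacle you isolated is a real gap in the source.
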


\begin{rem}
In order to identify these limit points in $\d$,  following the notation of the above Corollary, note that if $\tilde{g}=|\tilde{g}^*|\tilde{v}= \left(\begin{array}{cc} (1+b^*b)^{1/2} & b^* \\ b & (1+bb^*)^{1/2} \end{array}\right)\left(\begin{array}{cc} v_1 & 0 \\ 0 & v_2 \end{array}\right)$ (the reversed polar decomposition),  then $\tilde{v}\cdot\omega_0=v_2\omega_0v_1^*$ is a  partial isometry. Therefore,  the limit points of geodesics  are elements in $\partial \d$ of the form
$$
(b+(1+bb^*)^{1/2}\omega)((1+b^*b)^{1/2}+b^*\omega)^{-1} \hbox{ and }  (b-(1+bb^*)^{1/2}\omega)((1+b^*b)^{1/2}-b^*\omega)^{-1},
$$ 
where $b\in\a$ is arbitrary and $\omega\in\a^{**}$ is a partial isometry.

Note that not any partial isometry in $\a^{**}$ occurs in the polar decomposition of an element in $\d$. For instance, if $\a=C([0,1])$ (continuous functions in the unit interval), the polar decomposition of $f\in\a$ is $f=w|f|$, where
$w\in L^\infty(0,1)$ is given by $w(t)=\left\{ \begin{array}{l} f(t)/|f(t)| \hbox{ if } f(t)\ne 0 \\ 0 \hbox{ if } f(t)=0  \end{array} \right. $. An arbitrary partial isometry in $L^\infty(0,1)$ is a measurable function whose values are zero or complex numbers of modulus $1$. The set of zeros of such a function is an arbitrary measurable set, whereas the set of zeros of partial isometries which occur in the polar decomposition of a continuous function, are closed subsets of $[0,1]$.

Another way to study the limit points of geodesics, is by using the Borel subgroup $\b_\theta\subset\u(\theta)$ instead. Indeed, since the action of this group is transitive in $\d$, any limit point of a geodesic is either of the form $\tilde{g}\cdot v$ or $\tilde{g}\cdot(-v)$, for $\tilde{g}\in\b_\theta$. Consider the following example:
\begin{ejem}
Suppose that $\a$ is a von Neumann algebra, and let $p\ne 0$ be a projection in $\a$. For $\tilde{g}=\left( \begin{array}{cc} \displaystyle{\frac{g+\hat{g}}{2}}-\hat{g}x & \displaystyle{\frac{g-\hat{g}}{2}}-\hat{g}x \\ \displaystyle{\frac{g-\hat{g}}{2}}+\hat{g}x & \displaystyle{\frac{g+\hat{g}}{2}}+\hat{g}x  \end{array}\right)$, let us compute $\tilde{g}\cdot p$. After straightforward computations,
$$
\tilde{g}\cdot p=\left( g(1+p)+\hat{g}\left(-1+p+2x(1+p)\right)\right)\left(g(1+p)+\hat{g}\left(1-p -2x(1+p)\right)\right)^{-1}.
$$
Note that $1+p$ is invertible and that $(1-p)(1+p)^{-1}=1-p$. Then
$$
\tilde{g}\cdot p=\left(1+\hat{g}\left(p-1+2x(1+p)\right) (1+p)^{-1} g^{-1}\right) \left(1+\hat{g}\left(1-p-2x(1+p)\right) (1+p)^{-1}g^{-1}\right)^{-1}=
$$
$$
=\left(1+\hat{g}(p-1+2x)g^{-1}\right)\left(1+\hat{g}(1-p-2x)g^{-1}\right)^{-1}.
$$
Denote $\alpha=\hat{g}(p-1)g^{-1}$ and $\beta=2\hat{g}xg^{-1}$. 
Observe that $\alpha$ is a non-invertible selfadjoint element, $\alpha\le 0$ and its range is proper and closed; $\beta$ is an arcitrary anti-selfadjoint element. Then
$$
\tilde{g}\cdot p=(1+\alpha+\beta)\left(1-(\alpha+\beta)\right)^{-1}.
$$
Note that $1-(\alpha+\beta)$ is invertible because  $Re(1-(\alpha+\beta))=1-\alpha\ge 1$. If one picks $p=1$, then $\alpha=0$ and
$$
\tilde{g}\cdot 1=(1+\beta)(1-\beta)^{-1},
$$
which is a unitary operator such that $-1$ does not belong to its spectrum. In particular, this shows that the action of $\b_\theta$ ceases to be transitive in $\partial\d$.

\end{ejem}

\end{rem}

Our next result shows a necessary condition for an element $a\in\a^{**}$ with $\|a\|=1$ to be a limit point of a geodesic of $\d$

\begin{prop}
If $a\in\a^{**}$ is the limit point at $+\infty$  of a geodesic in $\d$, then $1-a^*a=hqh$, where $h\in G^+$ and $q\in\a^{**}$ is a projection. In particular, not every element of norm $1$ in $\a^{**}$ is the limit point of a geodesic: such elements satisfy that the defect element $1-a^*a$ has closed range.
\end{prop}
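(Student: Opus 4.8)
The plan is to read off the structure of $1-a^*a$ directly from the explicit description of the limit points obtained in the preceding Remark, and then use the $\theta$-invariance of the group action to transport the obvious defect of $\omega$ onto $a$, avoiding any term-by-term computation.

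First I recall that, by that Remark, a limit point at $+\infty$ has the form
$$
a=(b+h_2\omega)(h_1+b^*\omega)^{-1},\qquad h_1=(1+b^*b)^{1/2},\ \ h_2=(1+bb^*)^{1/2},
$$
where $b\in\a$ is arbitrary, $\omega\in\a^{**}$ is a partial isometry with $\|\omega\|=1$, and $\tilde{p}=\left(\begin{array}{cc} h_1 & b^* \\ b & h_2\end{array}\right)$ is the positive factor $|\tilde{g}^*|$ of the $\theta$-unitary producing the geodesic. Writing $D=h_1+b^*\omega$ and $N=b+h_2\omega$ (so $a=ND^{-1}$), the first thing to settle is that $D$ is invertible in $\a^{**}$; since $\|\omega\|=1$ this is exactly the Lemma preceding Proposition \ref{accion en el borde} applied to $\tilde{p}\in\u(\theta)$ (with $g_{11}=h_1$, $g_{12}=b^*$).

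The key observation is that $\tilde{p}\in\u(\theta)$, i.e. $\tilde{p}^*\rho\tilde{p}=\rho$, which rests on the intertwining relation $b^*h_2=h_1b^*$ (equivalently $b^*(1+bb^*)^{1/2}=(1+b^*b)^{1/2}b^*$, obtained from $b^*(1+bb^*)=(1+b^*b)b^*$ and the continuous functional calculus; this is also what makes the positive factor of a $\theta$-unitary again $\theta$-unitary). Granting this, I would note that $\tilde{p}\left(\begin{array}{c}1\\\omega\end{array}\right)=\left(\begin{array}{c}D\\N\end{array}\right)=\left(\begin{array}{c}1\\a\end{array}\right)D$, and evaluate $\theta$ on the vector $\tilde{p}\left(\begin{array}{c}1\\\omega\end{array}\right)$ in two ways: by invariance of $\tilde{p}$, and by sesquilinearity over $\a$ using the last identity. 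This gives
$$
\theta\!\left(\tilde{p}\left(\begin{array}{c}1\\\omega\end{array}\right),\tilde{p}\left(\begin{array}{c}1\\\omega\end{array}\right)\right)=1-\omega^*\omega
\quad\hbox{and}\quad
\theta\!\left(\tilde{p}\left(\begin{array}{c}1\\\omega\end{array}\right),\tilde{p}\left(\begin{array}{c}1\\\omega\end{array}\right)\right)=D^*(1-a^*a)D,
$$
whence $D^*(1-a^*a)D=1-\omega^*\omega$, and therefore $1-a^*a=(D^{-1})^*\,(1-\omega^*\omega)\,D^{-1}$.

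To conclude, since $\omega$ is a partial isometry the element $p:=1-\omega^*\omega$ is a projection in $\a^{**}$. Putting $S=D^{-1}$ and taking its polar decomposition $S=w|S|$ (with $w$ unitary and $|S|=(DD^*)^{-1/2}\in G^+$, both invertible since $D$ is), I rewrite
$$
1-a^*a=S^*pS=|S|\,(w^*pw)\,|S|=hqh,
$$
with $h=|S|\in G^+$ and $q=w^*pw$ a projection, which is the asserted form. The closed-range statement follows at once: representing $\a^{**}$ on a Hilbert space, $h$ is bounded and invertible and $R(q)$ is closed, so $R(hqh)=h\,R(q)$ is closed; hence an element of norm $1$ whose defect $1-a^*a$ has non-closed range (such elements exist whenever $\a^{**}$ is infinite dimensional, e.g. a positive $a$ with $\|a\|=1$ whose spectrum accumulates at $1$) cannot be a limit point. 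The only real obstacle is the bookkeeping that confirms $\tilde{p}\in\u(\theta)$ and the invertibility of $D$; once these are in place, the $\theta$-invariance does all the work and no computation of $D^*D-N^*N$ is required.
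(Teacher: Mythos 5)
Your proof is correct and follows essentially the same route as the paper's: both extract the identity $b_1^*(1-a^*a)\,b_1=1-\omega^*\omega$ (your $D^*(1-a^*a)D=1-\omega^*\omega$) from the $\theta$-invariance of the group element carrying $\left(\begin{smallmatrix}1\\ \omega\end{smallmatrix}\right)$ to $\left(\begin{smallmatrix}1\\ a\end{smallmatrix}\right)b_1$, and then conjugate by the polar decomposition of the invertible corner to land on $1-a^*a=hqh$; your $h=|D^{-1}|$ and $q=w^*(1-\omega^*\omega)w$ coincide exactly with the paper's $h=u|b_1|^{-1}u^*$ and $q=uq'u^*$. The only cosmetic difference is that you specialize to the positive factor $\tilde p=|\tilde g^*|$ (which obliges you to verify $\tilde p\in\u(\theta)$ via $b^*h_2=h_1b^*$), whereas the paper works with an arbitrary $\tilde g\in\u(\theta)$ and needs no such check.
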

\begin{proof}
If $a$ is the limit point of a geodesic if and only if there exists $\tilde{g}\in\u(\theta)$ and a partial isometry $\omega\in\a^{**}$ such that $a=\tilde{g}\cdot a$. The definition of the action implies that this equality can be read as an usual matrix equality
$$
\tilde{g} \left( \begin{array}{c} 1 \\ \omega \end{array}\right)=\left( \begin{array}{c} b_1 \\ b_2 \end{array}\right),
$$
with $b_1\in G$ and $b_2b_1^{-1}=a$.
Using the form $\theta$, and the fact that $\tilde{g}\in\u(\theta)$,
$$
\theta(\left( \begin{array}{c} b_1 \\ b_2 \end{array}\right),\left( \begin{array}{c} b_1 \\ b_2 \end{array}\right))=\theta(\left( \begin{array}{c} 1 \\ \omega \end{array}\right),\left( \begin{array}{c} 1 \\ \omega \end{array}\right)),
$$
i.e. 
$$
b_1^*b_1-b_2^*b_2=1-\omega^*\omega=q',
$$
which is a projection in $\a^{**}$. Let $b_1=u|b_1|$ be the polar decomposition, with $u$ unitary. Then 
$au=b_2b_1^{-1}u=b_2|b_1|^{-1}$. Thus,
$$
q'=b_1^*b_1-b_2^*b_2=|b_1|(1-|b_1|^{-1}b_2^*b_2|b_1|^{-1})|b_1|=,|b_1|(1-u^*a^*au)|b_1|,
$$
i.e.,
$$
1-a^*a=u|b_1|^{-1}q'|b_1|^{-1}u^*=hqh,
$$
for $q=uq'u^*$ a projection in $\a^{**}$ and $h=u|b_1|^{-1}u^*\in G^+$.
\end{proof}

\begin{rem}
We believe that the characterization of the partial isometries which appear in the polar decompositions of all limit points $a \in \a^{**}$ is an interesting open problem.
\end{rem}

\section{The invariant metric in $\pa^\theta$}\label{seccion inner product}

The characterization of the tangent spaces done in (\ref{tangente}), in Section \ref{seccion corta}, enables us to define
a Finsler metric, that is, a continuous distribution of norms on the tangent bundle of $\pa^\theta$.

Given $\ell\in\pa^\theta$ and $V\in(T\pa^\theta)_\ell$, fix a generator $\xx_0\in\k_\theta$ for $\ell$, i.e., $[\xx_0]=\ell$ and $\theta(\xx_0,\xx_0)=1$. Recall, from Remark \ref{como cambia}, that $\xx_0$ is determined up to a unitary element of $\a$: if $\xx'_0$ is another such generator, then there exists $u\in\u_\a$ such that $\xx'_0=\xx_0 u$. Having fixed a generator for $\ell$, as we saw in Section \ref{seccion corta}, $(T\pa^\theta)_\ell$ identifies with $\ell^{\perp_\theta}$, and to the tangent vector $V$ corresponds an element $\vv\in\ell^{\perp_\theta}$. We define:
\begin{equation}\label{finsler}
|V|_\ell:=\|\theta(\vv,\vv)\|^{1/2}.
\end{equation}
Note that $|\ \ |_\ell$ does not depend on the choice of the generator. If we choose $\xx'_0=\xx_0 u$ instead, the tangent vector $V$ is represented by $\vv'=\vv u\in\ell^{\perp_\theta}$, and therefore
$$
\|\theta(\vv',\vv')\|=\|\theta(\vv u,\vv u)\|=\|u^*\theta(\vv,\vv)u\|=\|\theta(\vv,\vv)\|.
$$
Next, recall from Lemma \ref{forma negativa}, that $\theta$ is negative definite (non degenerate), and therefore the expression (\ref{finsler}) above defines a proper norm in $(T\pa^\theta)_\ell$.

\begin{rem}
If $V\in (T\pa^\theta)_{[\xx_0]}$, by the identification in (\ref{tangente}), Section \ref{seccion corta},  $V$ is represented by some $\vv\in[\xx_0]^{\perp_\theta}$; since $\yy_0=\left( \begin{array}{c} (x_1^*)^{-1}x_2 \\ 1 \end{array} \right)$, for $\xx_0=\left( \begin{array}{c} x_1 \\ x_2 \end{array} \right)$, is a generator of $[\xx_0]^{\perp_\theta}$, there exists $a\in\a$ such that $\vv=\yy_0 a$. 
Then
$$
|V|_{[\xx_0]}=\|\theta(\yy_0a,\yy_0a)\|^{1/2}=\|a^*(1-x_2x_1^{-1})^*a \|^{1/2}=\|(1-|(x_2x_1^{-1})^*|^2)^{1/2}a\|.
$$
Note also that the norm of $\vv=\yy_0 a$ in $\a^2$ is
$$
\|\yy_0 a\|=\|\langle \yy_0 a,\yy_0 a\rangle\|^{1/2}=\|(1+|(x_2x_1^{-1})^*|^2)^{1/2}a\|.
$$
\end{rem}

\begin{prop}
For any $\ell\in\pa^\theta$, the norm $| \ |_{\ell}$ of $(T\pa^\theta)_{\ell}$ is complete. 
\end{prop}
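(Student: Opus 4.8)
The plan is to reduce the assertion to the completeness of $\a$ itself, by exhibiting $|\ \cdot\ |_\ell$ as a transported, equivalent copy of the $C^*$-norm. Fix a generator $\xx_0=\left(\begin{array}{c} x_1 \\ x_2 \end{array}\right)\in\k_\theta$ of $\ell$. By the identification (\ref{tangente}) and the computation in the Remark preceding this Proposition, the $\a$-linear map $a\mapsto \yy_0 a$ is a bijection of $\a$ onto $\ell^{\perp_\theta}\simeq (T\pa^\theta)_\ell$, under which the Finsler norm (\ref{finsler}) becomes
$$
a\longmapsto \|(1-|(x_2x_1^{-1})^*|^2)^{1/2}a\|.
$$
Hence it suffices to show that this expression is equivalent to the $C^*$-norm $\|a\|$ on $\a$.

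Writing $w=x_2x_1^{-1}$, I would invoke Lemma \ref{lema48} and Proposition \ref{prop49} to get $w\in\d$, that is $\|w\|<1$; this is the quantitative form of the negative-definiteness supplied by Lemma \ref{forma negativa}. Consequently $1-|w^*|^2=1-ww^*\ge (1-\|w\|^2)\,1>0$, so $(1-ww^*)^{1/2}$ is positive and invertible in $\a$, with $\|(1-ww^*)^{1/2}\|\le 1$ and $\|(1-ww^*)^{-1/2}\|\le (1-\|w\|^2)^{-1/2}$. Inserting $a=(1-ww^*)^{-1/2}(1-ww^*)^{1/2}a$ and using submultiplicativity then yields the two-sided estimate
$$
(1-\|w\|^2)^{1/2}\,\|a\|\ \le\ \|(1-ww^*)^{1/2}a\|\ \le\ \|a\|,
$$
valid for every $a\in\a$.

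From here the conclusion is immediate: under the linear bijection $a\mapsto \yy_0 a$ the norm $|\ \cdot\ |_\ell$ is equivalent to the complete $C^*$-norm of $\a$, and since completeness is preserved by a linear homeomorphism, $(T\pa^\theta)_\ell$ is complete in $|\ \cdot\ |_\ell$. I do not expect a serious obstacle; the one point that genuinely uses the geometry is the \emph{strict} inequality $\|w\|<1$, which is exactly what upgrades the pointwise negative-definiteness of Lemma \ref{forma negativa} into a uniform lower bound, and so turns mere injectivity of the norm into equivalence with a Banach norm. Independence of the choice of generator has already been recorded after (\ref{finsler}), so no further check is needed there.
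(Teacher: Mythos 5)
Your proof is correct and takes essentially the same route as the paper: a two-sided norm estimate showing $|\cdot|_\ell$ equivalent to a complete reference norm, with the decisive input in both cases being $\|x_2x_1^{-1}\|<1$ (Lemma \ref{lema48}, Proposition \ref{prop49}), i.e.\ the invertibility of $1-ww^*$ for $w=x_2x_1^{-1}$. The only cosmetic difference is that the paper compares $|\cdot|_\ell$ with the $\a^2$-norm on the closed (hence complete) submodule $[\yy_0]$, whereas you pull everything back to $\a$ through the bijection $a\mapsto\yy_0 a$ and invoke completeness of $\a$ directly; both reductions are sound.
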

\begin{proof}
With the current notations, if $V=\yy_0 a\in(T\pa^\theta)_{[\xx_0]}$,
$$
|V|_{[\xx_0]}=\|(1-|(x_2x_1^{-1})^*|^2)^{1/2}a\|=\|(1-|(x_2x_1^{-1})^*|^2)^{1/2}(1+|(x_2x_1^{-1})^*|^2)^{-1/2}(1+|(x_2x_1^{-1})^*|^2)^{1/2}a\|
$$
$$
\le \|\{(1-|(x_2x_1^{-1})^*|^2)(1+|(x_2x_1^{-1})^*|^2)^{-1}\}^{1/2}\| \|\yy_0a\|.
$$
Similarly
$$
\|\yy_0a\|\le \|\{(1+|(x_2x_1^{-1})^*|^2)(1-|(x_2x_1^{-1})^*|^2)^{-1}\}^{1/2}\| |V|_{[\xx_0]}.
$$
It follows that on $(T\pa^\theta)_{\ell]}\simeq [\yy_0]$, the metric $|\ |_{\ell}$ and the norm of $\a^2$ are equivalent. Since $[\yy_0]$ is closed in $\a^2 $, it is complete, and the proof follows.
\end{proof}

The distribution $\pa^\theta \ni\ell\mapsto |\ \ |_\ell$  is clearly continuous. Thus,  $\pa^\theta$ is endowed with a Finsler metric.

The following result is tautological, but of the utmost importance for our discussion:

\begin{teo}
The Finsler metric defined in (\ref{finsler}) is invariant under the action of $\u(\theta)$.
\end{teo}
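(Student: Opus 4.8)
The plan is to track how the differential of the action of a fixed $\tilde{g}\in\u(\theta)$ transforms the $\theta$-orthogonal representative of a tangent vector, and then to read off invariance of (\ref{finsler}) directly from the fact that $\tilde{g}$ preserves $\theta$.

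First I would make the differential explicit. Fix $\ell=[\xx_0]\in\pa^\theta$ with $\xx_0\in\k_\theta$, and $\tilde{g}\in\u(\theta)$. The action $[\xx]\mapsto[\tilde{g}\xx]$ is covered on $\oo$ by the linear map $\xx\mapsto\tilde{g}\xx$, so on the quotient $(T\pa^\theta)_\ell\simeq\a^2/[\xx_0]$ its differential at $\ell$ is the map induced by left multiplication by $\tilde{g}$, sending the class of $\dot{\xx}$ to the class of $\tilde{g}\dot{\xx}$ in $\a^2/[\tilde{g}\xx_0]$. By Proposition \ref{tejemas}, $\tilde{g}\xx_0\in\k_\theta$, so $\tilde{g}\xx_0$ is an admissible generator of $\tilde{g}\cdot\ell$.

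The key step is to identify the image representative in the $\theta$-orthogonal model. Because $\tilde{g}$ preserves $\theta$, it maps $[\xx_0]^{\perp_\theta}$ onto $[\tilde{g}\xx_0]^{\perp_\theta}$: if $\vv\in[\xx_0]^{\perp_\theta}$, then $\theta(\tilde{g}\xx_0,\tilde{g}\vv)=\theta(\xx_0,\vv)=0$. Using the $\theta$-orthogonal splitting $\a^2=[\xx_0]\oplus[\xx_0]^{\perp_\theta}$ (valid by Lemma \ref{forma negativa} together with the existence of $\yy_0$), write $\dot{\xx}=\vv+w$ with $\vv\in[\xx_0]^{\perp_\theta}$ and $w\in[\xx_0]$; then $\tilde{g}\dot{\xx}=\tilde{g}\vv+\tilde{g}w$ is again split along $[\tilde{g}\xx_0]^{\perp_\theta}\oplus[\tilde{g}\xx_0]$. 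Hence, under the identification (\ref{tangente}), the tangent vector $V$ represented by $\vv\in\ell^{\perp_\theta}$ is sent by $d\tilde{g}_\ell$ to the vector represented by $\tilde{g}\vv\in(\tilde{g}\cdot\ell)^{\perp_\theta}$, with respect to the generator $\tilde{g}\xx_0$.

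With this, the conclusion is immediate from the definition (\ref{finsler}) and the invariance of $\theta$:
\[
|d\tilde{g}_\ell(V)|_{\tilde{g}\cdot\ell}=\|\theta(\tilde{g}\vv,\tilde{g}\vv)\|^{1/2}=\|\theta(\vv,\vv)\|^{1/2}=|V|_\ell.
\]
The only genuine obstacle is bookkeeping: one must verify that $\tilde{g}\vv$ really is the representative of $d\tilde{g}_\ell(V)$ with respect to an admissible ($\k_\theta$-)generator of $\tilde{g}\cdot\ell$, which is exactly the content of the splitting argument above. Any remaining freedom in the choice of generator amounts, by Remark \ref{como cambia}, to right multiplication of the representative by a unitary $u\in\u_\a$, and this leaves $\|\theta(\cdot,\cdot)\|$ unchanged — so the value of the metric is well defined and the computation is legitimate.
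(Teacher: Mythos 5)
Your proof is correct and takes essentially the same route as the paper's: both identify the differential of the action with left multiplication by $\tilde{g}$, observe that $\tilde{g}$ carries $[\xx_0]^{\perp_\theta}$ onto $[\tilde{g}\xx_0]^{\perp_\theta}$ because it preserves $\theta$, and conclude from $\theta(\tilde{g}\vv,\tilde{g}\vv)=\theta(\vv,\vv)$. The additional bookkeeping you supply (the $\theta$-orthogonal splitting and the unitary ambiguity of the generator, handled via Remark \ref{como cambia}) is sound and merely makes explicit what the paper's terser argument leaves implicit.
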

\begin{proof}
Pick $\ell=[\xx_0]\in\pa^\theta$,  $V\in(T\pa^\theta)_{\ell}$ (as before, $V$ is identified to $\sim\yy_0 a$) and $\tilde{g}\in\u(\theta)$. The action of $\u(\theta)$ on $\pa^\theta$ induces an action on the tangent spaces. As quotients: $\tilde{g}$ maps $\a^2 /[\xx_0]$ onto $\a^2 / [\tilde{g}\xx_0]$, because $\tilde{g}$, being $\a$-linear, maps $[\xx_0]$ onto $[\tilde{g}\xx_0]$. But as $\theta$-orthogonal submodules as well: since $\tilde{g}$ preserves $\theta$, 
$$
\tilde{g}([\yy_0]^{\perp_\theta})=[\tilde{g}\yy_0]^{\perp_\theta}.
$$
Then
$$
|\tilde{g}V|_{\tilde{q}[\xx_0]}=\|\theta(\tilde{g}(\yy_0 a),\tilde{g}(\yy_0a))\|^{1/2}=\|\theta(\yy_0 a,\yy_0 a)\|^{1/2}=|V|_{[\xx_0]}.
$$ 
\end{proof}

\subsection{Invariant metric in $\d$}

We need to compute the differential of the map $\pa^\theta\to\d$ at $[{\bf e_1}]$. To do so, we use the above diagram (\ref{diagrama}).
Recall that $(T\pa^\theta)_{[\xx_0]}=\a^2 / \{\xx_0 a: a\in\a\}$; in particular 
$$
(T\pa^\theta)_{[{\bf e_1}]}=\a^2 / \a\times \{0\}.
$$
Thus, any tangent element $V\in(T\pa^\theta)_{[{\bf e_1}]}$ has a unique representative $V=\left[\left( \begin{array}{c} 0 \\ x \end{array} \right)\right]$, for $x\in\a$.
\begin{lem}\label{diferencial}
The differential of the map $\pa^\theta\to\d$, $[\xx]\mapsto x_2x_1^{-1}$,  at $[{\bf e_1}]$ is the map
$$
\left[\left( \begin{array}{c} 0 \\ x \end{array} \right)\right]\longmapsto x \ , \ \ x\in\a.
$$
\end{lem}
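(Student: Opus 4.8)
The plan is to compute the differential by differentiating the diffeomorphism $\pa^\theta \to \d$, $[\xx]\mapsto x_2x_1^{-1}$, along a curve through $[{\bf e_1}]$ whose velocity represents the given tangent vector. The base point $[{\bf e_1}]$ corresponds to $x_1 = 1$, $x_2 = 0$, i.e. to the origin $0 \in \d$. A tangent vector $V \in (T\pa^\theta)_{[{\bf e_1}]}$ is represented, as noted, by $\left[\left( \begin{array}{c} 0 \\ x \end{array} \right)\right]$ with $x \in \a$, so the natural curve to use is the affine lift
$$
\xx(t) = \left( \begin{array}{c} 1 \\ tx \end{array} \right),
$$
which satisfies $\xx(0) = {\bf e_1}$ and $\dot{\xx}(0) = \left( \begin{array}{c} 0 \\ x \end{array} \right)$. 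First I would verify that $[\xx(t)] \in \pa^\theta$ for small $t$ (indeed $x_1 = 1 \in G$ and $\theta(\xx(t),\xx(t)) = 1 - t^2 x^*x \in G^+$ for small $t$), so that the curve lies in $\pa^\theta$ and $[\dot{\xx}(0)]$ genuinely represents $V$.

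Next I would push this curve through the map $[\xx] \mapsto x_2 x_1^{-1}$. Here the computation is immediate: the image curve in $\d$ is
$$
c(t) = (tx)\cdot 1^{-1} = tx,
$$
and hence $\dot{c}(0) = x$. This already identifies the differential as the map $V \mapsto x$, which is exactly the claim. The only care needed is that the representative $\left( \begin{array}{c} 0 \\ x \end{array} \right)$ is the \emph{unique} representative in $\a^2 / (\a \times \{0\})$, as stated just before the lemma, so there is no ambiguity in reading off $x$ from $V$.

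The genuine (though minor) obstacle is bookkeeping rather than substance: I must confirm that differentiating the quotient map and then the chart map $\tilde{\pi}_\theta$ agrees with differentiating directly the lifted curve, i.e. that the identification $(T\pa^\theta)_{[{\bf e_1}]} \simeq \a^2 / (\a \times \{0\})$ used to \emph{state} the lemma is the same identification induced by the quotient submersion $\oo \to \pa$ of Section \ref{seccion corta}. This is exactly the content that $(T\pa)_{[\xx_0]} \simeq \a^2/[\xx_0]$ via the open embedding $\oo \hookrightarrow \a^2$, which is available from the excerpt. Once this compatibility is in hand, the differential is obtained by the chain rule applied to $t \mapsto \xx(t) \mapsto [\xx(t)] \mapsto x_2(t)x_1(t)^{-1}$, and the product-rule term involving $\frac{d}{dt}x_1(t)^{-1}$ vanishes at $t=0$ because $x_2(0)=0$. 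Thus the whole computation collapses to $\dot{c}(0) = x$, and no series or nontrivial estimate is required.
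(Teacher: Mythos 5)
Your proof is correct and takes essentially the same route as the paper: the paper's proof also differentiates the chart map along a lifted curve with $\xx(0)={\bf e_1}$, $\dot{x}_2(0)=x$, and notes that the product-rule term involving $\dot{x}_1(0)$ vanishes because $x_2(0)=0$ --- exactly the collapse you identify. The only cosmetic difference is that the paper uses an arbitrary smooth curve in $\k_\theta$ with these initial data, while your explicit affine lift $\xx(t)=\left(\begin{smallmatrix} 1 \\ tx \end{smallmatrix}\right)$ (which lies in $\oo$ rather than $\k_\theta$, but that is immaterial since the map is defined on classes $[\xx]$) makes the same computation immediate.
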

\begin{proof}
Fix $x\in\a$. We use the commutative diagram (\ref{diagrama}). Let $\xx(t)\in\k_\theta$ be a smooth curve such that $\xx(0)={\bf e_1}$, and the derivative of $[\xx(t)]$ (in $\pa^\theta$) at $t=0$ is $\left[\left( \begin{array}{c} 0 \\ x \end{array} \right)\right]$. Then $x_1(0)=1, x_2(0)=0, \dot{x}_2(0)=x$. If we map $\xx(t)$ onto $\d$, and differentiate at $t=0$ we get:
 $$
\frac{d}{dt} x_2(t)x_1^{-1}(t)|_{t=0}= \dot{x}_2(0)x_1^{-1}(0)-x_2(0)x_1^{-1}(0)\dot{x}_1(0) x_1^{-1}(0)=x,
$$
which proves our claim.
\end{proof}

\begin{teo}
The identification $[\xx]\longleftrightarrow x_2x_1^{-1}$ between $\pa^\theta$ and $\d$, is isometric.
\end{teo}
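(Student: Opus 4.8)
The plan is to exploit that the identification $[\xx]\mapsto x_2x_1^{-1}$ is $\u(\theta)$-equivariant (by (\ref{accion en D}) it intertwines the action on $\pa^\theta$ with the action on $\d$), together with the fact that \emph{both} Finsler metrics are $\u(\theta)$-invariant: the one on $\pa^\theta$ by the preceding theorem, and the one on $\d$ by its construction in \cite{tejemas}. Since $\u(\theta)$ acts transitively on $\pa^\theta$ (Theorem \ref{transitiva}), it then suffices to check that the differential of the identification preserves norms at a single base point; the natural choice is $[{\bf e_1}]\in\pa^\theta$, which corresponds to $0\in\d$.

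At this base point the computation is immediate. By Lemma \ref{diferencial}, the differential of the map at $[{\bf e_1}]$ sends the tangent vector $V=\left[\left(\begin{array}{c} 0 \\ x \end{array}\right)\right]$ to $x\in(T\d)_0$, whose Finsler norm in $\d$ is the usual norm $\|x\|$ (Remark \ref{tejemasbisbis}). On the other side, I would compute $|V|_{[{\bf e_1}]}$ directly from (\ref{finsler}): taking the generator ${\bf e_1}\in\k_\theta$, the $\theta$-orthogonal complement $[{\bf e_1}]^{\perp_\theta}$ equals $\{0\}\times\a$, so the representative of $V$ in $[{\bf e_1}]^{\perp_\theta}$ is exactly $\vv=\left(\begin{array}{c} 0 \\ x \end{array}\right)$. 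Since $\theta(\vv,\vv)=-x^*x$, one obtains $|V|_{[{\bf e_1}]}=\|\theta(\vv,\vv)\|^{1/2}=\|x\|$, so the two norms agree at $[{\bf e_1}]$.

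The only point requiring care is the compatibility of the two descriptions of the tangent space $(T\pa^\theta)_{[{\bf e_1}]}$: the quotient description $\a^2/(\a\times\{0\})$ used in Lemma \ref{diferencial}, and the identification with $[{\bf e_1}]^{\perp_\theta}$ underlying the definition (\ref{finsler}). Choosing the base point $[{\bf e_1}]$ makes this transparent, since the representative $\left(\begin{array}{c} 0 \\ x \end{array}\right)$ already lies in $[{\bf e_1}]^{\perp_\theta}$; thus no change-of-generator adjustment (cf.\ Remark \ref{como cambia}) intervenes, and one sidesteps the subtleties that would arise at a generic point. With the single-point equality in hand, equivariance together with the common $\u(\theta)$-invariance propagates the isometry to every tangent space, completing the argument.
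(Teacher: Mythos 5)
Your proposal is correct and takes essentially the same approach as the paper's proof: reduce via the $\u(\theta)$-equivariance of $[\xx]\mapsto x_2x_1^{-1}$ and the $\u(\theta)$-invariance of both Finsler metrics to checking that the differential at $[{\bf e_1}]$ (Lemma \ref{diferencial}) is isometric, where the identification $(T\pa^\theta)_{[{\bf e_1}]}\simeq[{\bf e_1}]^{\perp_\theta}=[{\bf e_2}]$ gives $|V|_{[{\bf e_1}]}=\|x^*x\|^{1/2}=\|x\|$, matching the usual norm on $(T\d)_0$. Your closing remark on the compatibility of the quotient description with the $\theta$-orthogonal-complement identification merely makes explicit a step the paper passes over silently.
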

\begin{proof}
The group $\u(\theta)$ acts isometrically on both $\pa^\theta$ and $\d$. The action is isometric in both spaces. Therefore, it suffices to show that the differential at $[{\bf e_1}]$ is an isometry.  By Lemma \ref{diferencial}, this map is
$$
\left[\left( \begin{array}{c} 0 \\ x \end{array} \right)\right]\longmapsto x.
$$
The norm of $\left[\left( \begin{array}{c} 0 \\ x \end{array} \right)\right]$ is computed via the identification $(T\pa^\theta)_{[{\bf e_1}]}\simeq [{\bf e_1}]^{\perp_\theta}=[{\bf e_2}]$, which sends $\left[\left( \begin{array}{c} 0 \\ x \end{array} \right)\right]$ to ${\bf e_2 x}$. The norm (in $\a^2$) of this element is 
$$\|x^*\theta({\bf e_2},{\bf e_2})x\|^{1/2}=\|x^*x\|^{1/2}=\|x\|.
$$
On the other hand, the norm of $x$ as a tangent element of $\d$ at $0$ is the usual norm $\|x\|$ (in $\a$).
\end{proof}

 \subsection{The  action of the Borel subgroup $\b_\theta$}

As we mentioned in Proposition \ref{tejemas} and Remark \ref{tejemas}, the group $\b_\theta$, which we call the Borel group of the form $\theta$, is a subgroup of $\u(\theta)$ which acts transitively in $\d$. Since it is a subgroup of $\u(\theta) $, the action is also isometric (recall also that the action is free in the hyperboloid $\k_\theta$). Therefore it acts transitively and isometrically in $\pa^\theta$. 

Given $z\in\d$, let us find an element $\tilde{g}\in\b_\theta$ such that $\tilde{g}\cdot 0=z$. First, we describe the action of $\u(\theta)$ on $\d$, which factors through the hyperboloid $\k_\theta$:
\begin{rem}
Given $\tilde{g}\in\u(\theta)$ and $z\in\d$, we lift $z$ to $\k_\theta$ by means of the global section
$$
\d\ni z\mapsto \left( \begin{array}{c} (1-z^*z)^{-1/2} \\  z(1-z^*z)^{-1/2} \end{array} \right).
$$
Next, we multiply $\tilde{g} \left( \begin{array}{c} (1-z^*z)^{-1/2} \\  z(1-z^*z)^{-1/2} \end{array} \right)$, and finally we compose with the fibration 
$$
\k_\theta\ni \vecz \mapsto z_2z_1^{-1}\in\d.
$$
\end{rem}
\begin{defi}
If $z\in \d$, we define
$$
\tilde{g}_z=\left( \begin{array}{cc} (1-z^*z)^{-1/2} & (1+z^*)^{-1}z^*(z+1)(1-z^*z)^{-1/2} \\ z(1-z^*z)^{-1/2} & (1+z^*)^{-1}(z+1)(1-z^*z)^{-1/2} \end{array}\right)
$$
$$
=\left( \begin{array}{cc} (1+z^*)^{-1} & 0 \\ 0 & (1+z^*)^{-1} \end{array}\right) 
\left( \begin{array}{cc} 1+z^* & z^*(z+1) \\ (1+z^*)z & z+1 \end{array}\right)
\left( \begin{array}{cc} (1-z^*z)^{-1/2} & 0 \\ 0 & (1-z^*z)^{-1/2} \end{array}\right).
$$
\end{defi}

Observe that the diagonal matrices on the right and left hand sides  do not belong to $\u(\theta)$, so that this is not a proper factorization of $\tilde{g}_z$.

\begin{lem}\label{matrizon}
If $z\in \d$, then $\tilde{g}_z\in\b_\theta$ and satisfies $\tilde{g}_z\cdot 0=z$.
\end{lem}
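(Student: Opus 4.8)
The plan is to treat the two assertions separately: the orbit identity $\tilde{g}_z\cdot 0=z$ is immediate, while membership in $\b_\theta$ requires exhibiting explicit Borel parameters. Throughout, invertibility of $1\pm z$, $1\pm z^*$ and $1-z^*z$ is guaranteed by $\|z\|<1$, so all expressions are well defined.

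For the action identity I would simply specialize the formula (\ref{accion en D}) to $z=0$, where it reduces to $g_{21}g_{11}^{-1}$. For $\tilde{g}_z$ this gives
$$
z(1-z^*z)^{-1/2}\big((1-z^*z)^{-1/2}\big)^{-1}=z(1-z^*z)^{-1/2}(1-z^*z)^{1/2}=z,
$$
which settles the second claim at once.

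The substance is the assertion $\tilde{g}_z\in\b_\theta$, and here the plan is to read off candidate parameters by comparing $\tilde{g}_z$ with the general shape of an element of $\b_\theta$. Summing all four entries of that general form gives $2g$, so I would guess $g$ from the sum of the four entries of $\tilde{g}_z$. Using $(1+z^*)^{-1}(1+z^*)=1$ one finds $g_{11}+g_{21}=(1+z)(1-z^*z)^{-1/2}=g_{12}+g_{22}$, whence
$$
g=(1+z)(1-z^*z)^{-1/2},\qquad \hat{g}=(g^*)^{-1}=(1+z^*)^{-1}(1-z^*z)^{1/2}.
$$
The coincidence $g_{11}+g_{21}=g_{12}+g_{22}$ is exactly the consistency condition $g_{22}-g_{11}=g_{21}-g_{12}$ needed for the two off-diagonal equations defining the Borel form to determine a single $\hat{g}x$. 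I would then set
$$
x=\tfrac12(1-z^*z)^{-1/2}(z-z^*)(1-z^*z)^{-1/2},
$$
and check anti-Hermiticity directly: since $(z-z^*)^*=-(z-z^*)$ and $(1-z^*z)^{-1/2}$ is self-adjoint, we get $x^*=-x$.

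It then remains to verify the four defining relations $g+\hat{g}=g_{11}+g_{22}$, $g-\hat{g}=g_{12}+g_{21}$, and $2\hat{g}x=g_{22}-g_{11}=g_{21}-g_{12}$. The key algebraic device, which I expect to be the main obstacle because $z$ and $z^*$ do not commute and $1-z^*z$ does not factor, is the identity
$$
1-z^*z=(1+z^*)-z^*(1+z),\qquad\text{hence}\qquad (1+z^*)^{-1}(1-z^*z)^{1/2}=\big[1-(1+z^*)^{-1}z^*(1+z)\big](1-z^*z)^{-1/2}.
$$
Substituting this rewriting of $\hat{g}$ and then left-multiplying each proposed identity by $(1+z^*)$ clears the inverse $(1+z^*)^{-1}$ and reduces every verification to a short polynomial identity in $z$ and $z^*$ (for instance $g+\hat{g}=g_{11}+g_{22}$ collapses to $2+z+z^*$ on both sides). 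The computation $2\hat{g}x=(1+z^*)^{-1}(z-z^*)(1-z^*z)^{-1/2}=g_{22}-g_{11}$ then confirms the chosen $x$, and the dual relation $2\hat{g}x=g_{21}-g_{12}$ holds automatically by the consistency condition already established. Alternatively, one could verify directly that $\tilde{g}_z=\tilde{\gamma}_g\tilde{\tau}_x$ for these parameters, using the factorization $\b_\theta=G_\theta T_\theta$; this amounts to the same polynomial identities but packages them as a single matrix product.
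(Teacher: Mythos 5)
Your proposal is correct and takes essentially the same route as the paper: the paper likewise matches $\tilde{g}_z$ against the explicit parametrization of $\b_\theta$ from Remark \ref{tejemasbis}, arriving at the same $g=(1+z)(1-z^*z)^{-1/2}$, $\hat{g}=(1+z^*)^{-1}(1-z^*z)^{1/2}$ and the same anti-Hermitian $x=\frac12(1-z^*z)^{-1/2}(z-z^*)(1-z^*z)^{-1/2}$. The only differences are matters of bookkeeping: the paper obtains the action identity by lifting $0$ to ${\bf e_1}\in\k_\theta$ and solving the first-column equations (leaving the second-column check implicit), whereas you verify $\tilde{g}_z\cdot 0=z$ directly from (\ref{accion en D}) and spell out all four entry relations, which if anything makes the verification slightly more complete.
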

\begin{proof}
The origin $0\in\d$ lifts to ${\bf e_1}\in\k_\theta$. Recall from Remark \ref{tejemasbis} the form of the elements in $\b_\theta$: 
$$
\b_\theta= \{ \left(  \displaystyle{\begin{array}{cc} \displaystyle{\frac{g+\hat{g}}{2}} -\hat{g} x & \displaystyle{\frac{g-\hat{g}}{2}}-\hat{g}x \\ \displaystyle{\frac{g-\hat{g}}{2}}+\hat{g}x & \displaystyle{\frac{g+\hat{g}}{2}} +\hat{g}x \end{array}} \right): g\in G ,  x^*=-x \} .
$$
Thus, we are looking for $g\in G$ and $x\in\a$ with $x^*=-x$ such that
$$
\frac12(g+\hat{g})-\hat{g}x=(1-z^*z)^{-1/2} \ \hbox{ and } \ \frac12(g-\hat{g})+\hat{g}x=z(1-z^*z)^{-1/2}.
$$
That is, $g=(1+z)(1-z^*z)^{-1/2}$, thus $\hat{g}=(1+z^*)^{-1}(1-z^*z)^{1/2}$ and
$$
\hat{g}x=\frac12(1+z^*)^{-1}\{z-z^*\}(1-z^*z)^{-1/2}.
$$
 Then, 
$$
x=\frac12 g^*(1+z^*)^{-1}\{z-z^*\}(1-z^*z)^{-1/2}=\frac12 (1-z^*z)^{-1/2}(z-z^*)(1-z^*z)^{-1/2},
$$
which is clearly anti-Hermitian, and thus $\tilde{g}_z\in\b_\theta$.
\end{proof}
\begin{rem}\label{1/2}
Denote by the $d$ the distance defined by the the Finsler  metric introduced in $\d$. In \cite{tejemas} (Section 8) it was shown that
$$
d(0,z)=\frac12\log( \frac{1+\|z\|}{1-\|z\|}).
$$
Using the fact that the action of $\b_\theta$  on $\d$ is isometric, and the above construction of $\tilde{g}_z$, for arbitrary $z_1,z_2\in\d$, we can  compute  $d(z_1,z_2)$ as follows:
\begin{equation}\label{distancia disco}
d(z_1,z_2)=d(0,\tilde{g}_{z_1}^{-1}\cdot z_2)=\frac12\log( \frac{1+\|\tilde{g}_{z_1}^{-1}\cdot z_2\|}{1-\|\tilde{g}_{z_1}^{-1}\cdot z_2\|}).
\end{equation}
In order to compute $\tilde{g}_{z}^{-1}$, recall that elements $\tilde{g}$ in $\u(\theta)$ are characterized by the relation $\rho \tilde{g}^*\rho=\rho^{-1}$. Then
$$
\tilde{g}_z^{-1}=\left( \begin{array}{cc} (1-z^*z)^{-1/2} & 0 \\ 0 & (1-z^*z)^{-1/2} \end{array}\right)
\left( \begin{array}{cc} 1+z & -z^*(1+z) \\ -(1+z^*)z & 1+z^* \end{array}\right)
\left(\begin{array}{cc} (1+z)^{-1} & 0 \\ 0 & (1+z)^{-1} \end{array}\right).
$$
Then, after straightforward computations,
$$
\tilde{g}_{z_1}^{-1}\cdot z_2=(1-z_1^*z_1)^{-1/2}(1+z_1^*)(1+z_1)^{-1}(z_2-z_1)(1-z_1^*z_2)^{-1}(1-z_1^*z_1)^{1/2}.
$$
\end{rem}

\section{Operator cross ratio in the  hyperbolic part of the projective line}

Here we state our main result, relating the metric of $\pa^\theta$  introduced in Section 8, with the so called operator cross ratio, as defined in the Grassmann manifold of a Hilbert space by M.I. Zelikin \cite{zelikin}. We shall apply these ideas to the rank one submodules in $\pa^\theta$. To this effect, the isometry between $\pa^\theta$ and the disk $\d$ will be important. 

Consider $\ell=\left[ \left(\begin{array}{l} 1 \\ z \end{array} \right) \right]\in\pa^\theta$, for $z\in\d$. Let $z=\omega|z|$ be the polar decomposition. Let $\delta$ be the geodesic of $\pa^\theta$ such that $\delta(0)=\left[ \left(\begin{array}{l} 1 \\  0 \end{array} \right) \right]$ and $\delta(1)=\left[ \left(\begin{array}{l} 1 \\ z \end{array} \right) \right]$. Equivalently, regarded in $\d$: $\delta(0)=0$ and $\delta(1)=z$.  As seen in Section 6, 
$$
{\rm SOT}-\lim_{t\to+\infty} \delta(t)=\omega \hbox{ and } {\rm SOT}-\lim_{t\to-\infty} \delta(t)=-\omega.
$$
Four points are determined: $-\omega, 0 , z, \omega$, or better, four submodules 
$$
\ell_{-\infty}:=\left[ \left(\begin{array}{c} 1 \\ -\omega \end{array} \right) \right], \ell_0:=\left[ \left(\begin{array}{l} 1 \\ 0 \end{array} \right) \right], \ell=\left[ \left(\begin{array}{l} 1 \\ z \end{array} \right) \right], \ell_{+\infty}:=\left[ \left(\begin{array}{l} 1 \\ \omega \end{array} \right) \right],
$$
where the limit lines lie in $\partial \pa^\theta$.

In Section 3  we defined the operator cross ratio of four elements in $\pa$, as a (possibly empty) set of module endomorphisms, following ideas in \cite{zelikin}. Here we compute the  operator cross ratio $CR(\ell_{-\infty},\ell_0,\ell, \ell_{+\infty})$,  proving that it is non empty, and  that there exists a natural $\ell$-endomorphism to choose from this set. 

Recall that elements of $CR(\ell_{-\infty},\ell_0,\ell, \ell_{+\infty})$ are  (module) endomorphisms of $\ell$, defined  as the composition of 
{\it the projection from $\ell$ to $\ell_0$ parallel  to $\ell_{-\infty}$, followed by the projection from $\ell_0$ to $\ell$ parallel to $\ell_{+\infty}$}.

In coordinates, by choosing generators in the respective submodules
$$
\left(\begin{array}{c} 1 \\ z \end{array} \right) \mapsto \left(\begin{array}{c} 1 \\ 0 \end{array} \right)\lambda , \ \left(\begin{array}{c} 1-\lambda \\ z \end{array} \right)= \left(\begin{array}{c} 1 \\ -\omega \end{array} \right) \mu.
$$
Then $1-\lambda=\mu$ and $z=-\omega\mu$. Then $\omega|z|=-\omega\mu$. If $z$ is invertible (and then $\omega$ is unitary) this implies $\mu=-|z|$, otherwise this is just one possible solution. Non uniqueness of solutions of these equations reflect the geometric fact that the modules  $\ell_0$ and $\ell_\infty$ may not be in direct sum. Explicitly, all solutions of these equations are of the form
$$
\lambda=1+|z|-\Omega \ , \ \ \mu=-|z|+\Omega,
$$
where $\Omega\in\a^{**}$ is  such that $\omega\Omega=0$. In particular, $|z|\Omega=|z|\omega^*\omega\Omega=0$. We choose the solution with $\Omega=0$. Note that $\lambda=1+|z|$, and therefore the first projection in the above composition is given by
$$
\left(\begin{array}{c} 1 \\ z \end{array} \right)\mapsto \left(\begin{array}{c} 1 +|z| \\ 0 \end{array} \right).
$$
Next 
$$
\left(\begin{array}{c} 1+|z| \\ 0 \end{array} \right)\mapsto \left(\begin{array}{c} 1 \\ z \end{array} \right)\gamma \ , \ \left(\begin{array}{c} 1+|z|-\gamma \\ -z\gamma \end{array} \right)=\left(\begin{array}{c} 1 \\ \omega \end{array} \right)\epsilon.
$$
So that $1+|z|-\gamma=\epsilon$ and $-z\gamma=\omega\epsilon$, and then  (the unique solution if $\omega$ is unitary, or a possible solution that we choose, otherwise)  $1+|z|-\gamma=-|z|\omega$, i.e.,
$$
\gamma=(1-|z|)^{-1}.
$$
Other solutions of the above equation are of the form
$$
\gamma=(1-|z|)^{-1}+(1-|z|)^{-1}\Omega'    , 
$$
where $\Omega'\in\a^{**}$ is  such that $\omega\Omega'=0$.
In general, the possible endomorphisms $\ell\to\ell$ are given  (in these coordinates) by
$$
\left(\begin{array}{c} 1 \\ z \end{array} \right)\mapsto \left(\begin{array}{c} 1 \\ z \end{array} \right) (1+|z|+\Omega)(1-|z|)^{-1}(1+\Omega')= (1+|z|)(1-|z|)^{-1}+\Omega' +\Omega(1-|z|)^{-1}+\Omega\Omega',
$$
where we use that  $\omega\Omega=\omega\Omega'=0$, and thus $(1\pm |z|)^{\pm 1}\Omega=\Omega$ (and the same for $\Omega'$).

As noted, if $z$ is invertible, there is a unique solution with $\Omega=\Omega'=0$. Our choice of cross ratio, picking $\Omega=\Omega'=0$ in any case,  is justified below. 
First, we prove the following fact, which  is certainly well known.
\begin{lem}\label{modulo sot}
Let $a_n\in\a$ with $\|a_n\|\le 1$. If $a_n\to a$ strongly, then $|a_n|\to |a|$ strongly.
\end{lem}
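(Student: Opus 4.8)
The plan is to route the statement through two standard facts: (i) multiplication is jointly strongly continuous on norm-bounded sets, and (ii) the square root $x\mapsto x^{1/2}$ is strongly continuous on the positive part of the unit ball. Granting these, the argument proceeds in two steps: first establish that $a_n^*a_n\to a^*a$ strongly, and then apply (ii) to the positive contractions $a_n^*a_n$ to obtain $|a_n|=(a_n^*a_n)^{1/2}\to (a^*a)^{1/2}=|a|$ strongly. Fact (ii) is the routine, reusable ingredient, proved exactly as in Proposition \ref{tejemas}-type polynomial arguments: since $\sigma(a_n^*a_n)\subset[0,1]$ for every $n$, choose polynomials $p_k$ with $p_k(0)=0$ converging uniformly to $\sqrt{t}$ on $[0,1]$, write
\[
(a_n^*a_n)^{1/2}\xi-(a^*a)^{1/2}\xi=\big((a_n^*a_n)^{1/2}-p_k(a_n^*a_n)\big)\xi+\big(p_k(a_n^*a_n)-p_k(a^*a)\big)\xi+\big(p_k(a^*a)-(a^*a)^{1/2}\big)\xi,
\]
bound the two outer terms uniformly in $n$ by the uniform approximation on $[0,1]$, and kill the middle term via (i), since a fixed polynomial in a bounded strongly convergent sequence converges strongly. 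A standard $\varepsilon/3$ argument then yields (ii).

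The heart of the matter, and the step I expect to be the main obstacle, is the first step, $a_n^*a_n\to a^*a$ strongly. For a fixed vector $\xi$ I would split
\[
a_n^*a_n\xi-a^*a\xi=a_n^*(a_n-a)\xi+(a_n^*-a^*)(a\xi).
\]
The first summand is harmless: $\|a_n^*(a_n-a)\xi\|\le\|a_n\|\,\|(a_n-a)\xi\|\le\|(a_n-a)\xi\|\to 0$ by the uniform bound $\|a_n\|\le 1$ and the strong convergence of $a_n$. All the difficulty is concentrated in the second summand $(a_n^*-a^*)(a\xi)$, which probes the adjoints $a_n^*$ on the vector $\eta:=a\xi$. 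Strong convergence $a_n\to a$ gives weak convergence $a_n\to a$, hence $a_n^*\to a^*$ weakly, so this term tends to $0$ weakly; the work is to upgrade this to norm convergence.

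To effect the upgrade I would expand
\[
\|(a_n^*-a^*)\eta\|^2=\|a_n^*\eta\|^2-2\,\mathrm{Re}\,\langle a_n^*\eta,a^*\eta\rangle+\|a^*\eta\|^2 .
\]
The cross term is controllable: $\langle a_n^*\eta,a^*\eta\rangle=\langle\eta,a_na^*\eta\rangle\to\langle\eta,aa^*\eta\rangle=\|a^*\eta\|^2$, since $a^*\eta$ is fixed and $a_n(a^*\eta)\to a(a^*\eta)$ strongly. Thus the whole quantity reduces to $\limsup_n\|a_n^*\eta\|^2-\|a^*\eta\|^2$, while weak lower semicontinuity of the norm already gives $\liminf_n\|a_n^*\eta\|\ge\|a^*\eta\|$. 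Everything therefore comes down to the single delicate point
\[
\|a_n^*\eta\|\longrightarrow\|a^*\eta\|\qquad(\eta\in\overline{R(a)}),
\]
equivalently $\langle a_na_n^*\eta,\eta\rangle\to\langle aa^*\eta,\eta\rangle$. I expect this to be the genuine crux: it again involves the adjoints (now through $a_na_n^*$ rather than $a_n^*a_n$), so it is precisely where the contraction hypothesis must be used to its fullest, and where I would exploit the concrete way the sequence arises in the surrounding argument (the approximants produced by Kaplansky's theorem) to secure the required limit of norms on $\overline{R(a)}$. Once this is in hand, the two displays above close step one, and step (ii) finishes the proof.
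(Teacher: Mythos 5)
Your step (ii) --- strong continuity of the square root on positive contractions of norm at most one --- is correct, and it reproduces (by polynomial approximation rather than the paper's spectral-measure sketch) the second half of the paper's proof. But the crux you isolated in step (i) is a genuine gap, and your refusal to wave it through is better founded than you may realize: that step cannot be closed, because the lemma as stated is \emph{false}. Take $\a=\b(\h)$ with orthonormal basis $(e_k)_{k\ge 1}$, put $f_n=\frac{1}{\sqrt{2}}(e_1+e_n)$, and let $a_n\xi=\langle \xi,f_n\rangle e_1$ (the rank-one partial isometry with initial vector $f_n$ and final vector $e_1$), so $\|a_n\|=1$. Since $\|(a_n-a)\xi\|=\frac{1}{\sqrt{2}}|\langle\xi,e_n\rangle|\to 0$ for $a:=\frac{1}{\sqrt{2}}P_{e_1}$, we have $a_n\to a$ strongly. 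However $a_n^*a_n=P_{f_n}$, hence $|a_n|=P_{f_n}$, while $|a|=\frac{1}{\sqrt{2}}P_{e_1}$, and
\[
|a_n|e_1=\tfrac12(e_1+e_n)\not\longrightarrow \tfrac{1}{\sqrt{2}}\,e_1=|a|e_1,
\]
so $|a_n|\not\to|a|$ strongly (not even weakly). Your reduction pinpoints exactly what fails: here $e_1\in\overline{R(a)}$ and $\|a_n^*e_1\|=\|f_n\|=1\not\to\|a^*e_1\|=\frac{1}{\sqrt{2}}$; already the preliminary claim $a_n^*a_n\to a^*a$ strongly fails, since $a_n^*a_ne_1=\frac12(e_1+e_n)$ while $a^*ae_1=\frac12 e_1$. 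So no argument can close step (i) from the stated hypotheses alone.

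Comparing with the paper: its proof of the first step rests on the expansion $\||a_n|^2\xi-|a|^2\xi\|^2=\|a_n\xi\|^4+\|a\xi\|^4-2\,\mathrm{Re}\,\langle a_n\xi,a_n|a|^2\xi\rangle$, which is incorrect --- the first two terms should be $\|a_n^*a_n\xi\|^2$ and $\|a^*a\xi\|^2$, and $\|a_n^*a_n\xi\|^2\ne\|a_n\xi\|^4$ in general (in the example above, $\|a_n^*a_ne_1\|=\frac{1}{\sqrt{2}}$ while $\|a_ne_1\|^2=\frac12$). That erroneous identity silently smuggles in precisely the convergence of $\|a_n^*\eta\|$ on $\overline{R(a)}$ that you declined to assume; your analysis is the more careful one, and the counterexample confirms it. The statement, and your two-step scheme verbatim, become correct if the hypothesis is strengthened to $*$-strong convergence ($a_n\to a$ \emph{and} $a_n^*\to a^*$ strongly): then $(a_n^*-a^*)a\xi\to 0$ trivially, step (i) closes by your own decomposition, and step (ii) finishes. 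Your closing instinct --- to exploit how the sequence actually arises --- is also the right repair for the paper itself: in Proposition \ref{cont sot} the invertible approximants furnished by Dixmier--Mar\'echal must be checked (or re-chosen) to converge $*$-strongly, since plain strong convergence is demonstrably insufficient.
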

\begin{proof}
For any $\xi\in\h$,
$$
\||a_n|^2\xi-|a|^2\xi\|^2=\|a_n\xi\|^4+\|a\xi\|^4-2 Re \langle a_n\xi,a_n|z|^2\xi\rangle.
$$
Clearly $\|a_n\xi\|\to \|a\xi\|$ and $\langle a_n\xi,a_n|z|^2\xi\rangle\to \langle a\xi,a|a|^2\xi\rangle=\|a\xi\|^4$, so that $|a_n|^2\to |a|^2$ strongly. It is known that the square root of positive operators  is strongly continuous in the unit ball: if $0\le b_n\le 1$ and $b_n\to b$ strongly, then $b_n^{1/2}\to b^{1/2}$. Let us also sketch a proof of this fact. If $\xi\in\h$,
$$
\|b_n^{1/2}\xi-b^{1/2}\xi\|^2=\langle b_n\xi,\xi\rangle+\langle b\xi,\xi\rangle-2 Re\langle b_n^{1/2}\xi,b^{1/2}\xi \rangle ,
$$
and 
$$
\langle ba_n^{1/2}\xi,b^{1/2}\xi \rangle=\int_0^\infty t^{1/2} d\mu_n(t),
$$
where $\mu_n=\mu_{b_n, \xi,b^{1/2}\xi}$ is the scalar spectral measure of $b_n$, associated to the pair of vectors $\xi, b^{1/2}\xi$.  If $p(t)$ is a polynomial, than clearly $p(b_n)\to p(b)$ strongly, because $\|b_n\|\le 1$. Then $\mu_n$ converge to $\mu$ (the scalar spectral measure of $a$ associated the the vectors $\xi, b^{1/2}\xi$), when, regarded as functionals in $C(0,1)^*$, they are evaluated at polynomials. Since the norms of these measures are uniformly bounded,
$$
\|\mu_n\|\le \|\xi\|\|b_n^{1/2}\xi\|\le \|\xi\|^2,
$$
it follows that $\mu_n(t^{1/2})$ converges weakly to $\mu(t^{1/2})$.
\end{proof}

\begin{rem}
 Dixmier and Marechal \cite{dixmar} proved that the set on invertible elements of a von Neumann algebra is strong operator dense in the algebra. 
The argument in \cite{dixmar}  proceeds as follows. Let $a=u|a|$ be the polar decomposition of $a$ ($u\in\a^{**}$). First, the algebra $\a^{**}$ is factored in its finite and properly infinite parts.  In the finite part $u$ can be chosen unitary.  In the properly infinite part, one readily sees that it suffices to consider the cases in which $u$ is an isometry or a co-isometry. In the case that $u$ is an isometry, Dixmier and Mar\'echal prove  that $u$ is the strong limit of unitaries $u_n$.  If $u$ is a co-isometry, they show that there exist invertible elements $g_n$ which converge strongly to $u$, with norms $\|g_n\|=1$ (this is clear in the proof, though it is not stated in their result).
Summarizing, if $\a$ is a von Neumann algebra, and $a\in \a$, there exist $g_n\in G$ with $\|g_n\|\le \|a\|$ such that 
$$
{\rm SOT}-\lim_{n\to\infty} g_n =a.
$$
\end{rem}

Using this result, it is clear that, if $\a$ is a von Neumann algebra, and $z\in\d$, then there exists  and $z_n\in G$ with $\|z_n\|\le \|z\|$,  such that $z_n\to z$ strongly.

\begin{prop}\label{cont sot}
Let $z_n,z\in\d$ such that $z_n\to z$ strongly and $\|z_n\|\le \|z\|$. Then
$$
(1+|z_n|)(1-|z_n|)^{-1}\to (1+|z|)(1-|z|)^{-1} \hbox{ strongly}.
$$
\end{prop}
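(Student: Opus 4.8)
The plan is to reduce the statement about the operator-valued function $(1+|z_n|)(1-|z_n|)^{-1}$ to the strong convergence of the absolute values $|z_n|$, which is already supplied by Lemma \ref{modulo sot}. Since $z_n \to z$ strongly with $\|z_n\| \le \|z\| < 1$, Lemma \ref{modulo sot} gives $|z_n| \to |z|$ strongly, and the operators $|z_n|$ all live in the common ball of radius $r := \|z\| < 1$. The map $s \mapsto (1+s)(1-s)^{-1}$ is continuous on $[0,r]$, so the whole argument amounts to checking that strong convergence of positive operators in a fixed ball $[0,r]$ with $r<1$ is preserved under this fixed continuous functional calculus.

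First I would record that $\sigma(|z_n|) \subset [0,r]$ for every $n$ and likewise $\sigma(|z|)\subset[0,r]$, so the function $f(s)=(1+s)(1-s)^{-1}$ is continuous and bounded on a compact interval containing all the relevant spectra, with $\|f(|z_n|)\|\le (1+r)(1-r)^{-1}$ uniformly in $n$. The essential fact I would invoke is the strong continuity of continuous functional calculus on a fixed closed ball: if $0\le b_n\le r<1$ and $b_n\to b$ strongly, then $f(b_n)\to f(b)$ strongly for any $f\in C[0,r]$. The proof of the final statement then follows by applying this with $b_n=|z_n|$ and $b=|z|$.

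The key step, and the only one requiring real argument, is establishing this strong continuity of $f(\cdot)$ on a uniformly norm-bounded family. I would prove it exactly in the style already used inside Lemma \ref{modulo sot} for the square root: first handle polynomials $p$, for which $p(b_n)\to p(b)$ strongly is immediate because the $b_n$ are uniformly bounded (products and sums of strongly convergent, uniformly bounded sequences converge strongly); then, given $\varepsilon>0$, pick by Weierstrass a polynomial $p$ with $\sup_{s\in[0,r]}|f(s)-p(s)|<\varepsilon$, so that $\|f(b_n)-p(b_n)\|<\varepsilon$ and $\|f(b)-p(b)\|<\varepsilon$ uniformly in $n$ by the spectral containment; finally, for a fixed vector $\xi$, estimate
$$
\|f(b_n)\xi - f(b)\xi\| \le \|f(b_n)\xi - p(b_n)\xi\| + \|p(b_n)\xi - p(b)\xi\| + \|p(b)\xi - f(b)\xi\|,
$$
where the outer two terms are at most $\varepsilon\|\xi\|$ and the middle term tends to $0$. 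Letting $\varepsilon\to 0$ gives $f(b_n)\xi\to f(b)\xi$.

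The main obstacle is simply making the uniform-norm-bound hypothesis do its job cleanly: the step $p(b_n)\to p(b)$ strongly for a fixed polynomial relies crucially on $\sup_n\|b_n\|<\infty$ (so that, e.g., $b_n^2 = b_n\cdot b_n \to b^2$ strongly), and the Weierstrass-approximation step relies on the uniform spectral containment $\sigma(b_n)\subset[0,r]$ with $r<1$ to control $\|f(b_n)-p(b_n)\|$ independently of $n$. Both are guaranteed here by the hypothesis $\|z_n\|\le\|z\|<1$, so no delicate estimate beyond the three-term triangle inequality is needed; the result is then essentially a corollary of Lemma \ref{modulo sot} together with this standard functional-calculus continuity.
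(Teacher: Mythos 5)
Your proof is correct, and it takes a recognizably different route from the paper's. The paper argues factor by factor: from $|z_n|\to|z|$ strongly (Lemma \ref{modulo sot}) it gets $1\pm|z_n|\to 1\pm|z|$ strongly, bounds $\|(1-|z_n|)^{-1}\|\le (1-\|z\|)^{-1}$ uniformly via the Neumann series $\sum_k |z_n|^k$, deduces strong convergence of the inverses from this uniform bound, and finishes with joint strong continuity of multiplication on norm-bounded sets. You instead prove a single general lemma: if $0\le b_n\le r<1$ and $b_n\to b$ strongly, then $f(b_n)\to f(b)$ strongly for every $f\in C[0,r]$, via polynomial approximation (Weierstrass plus the uniform spectral containment $\sigma(b_n)\subset[0,r]$, with the three-term triangle inequality), and then apply it to $f(s)=(1+s)(1-s)^{-1}$. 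Both arguments lean on the same two hypotheses in the same way --- Lemma \ref{modulo sot} for $|z_n|\to|z|$, and $\|z_n\|\le\|z\|<1$ for the uniform control --- so the substance is comparable; what your version buys is generality: the same lemma immediately yields, e.g., the convergence $(1-|z_n|^2)^{-1/2}\to(1-|z|^2)^{-1/2}$ that the paper later invokes with only the phrase ``by a similar argument,'' and it also subsumes the square-root continuity sketched inside Lemma \ref{modulo sot}. What the paper's version buys is brevity and self-containedness: the Neumann-series bound and the identity $a_n^{-1}-a^{-1}=a_n^{-1}(a-a_n)a^{-1}$ avoid any appeal to polynomial approximation. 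One small remark: your aside that the argument is ``exactly in the style'' of Lemma \ref{modulo sot} is slightly off --- that proof approximates $t^{1/2}$ using scalar spectral measures and weak convergence of measures rather than a norm-uniform Weierstrass estimate --- but this does not affect your proof, which stands on its own.
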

\begin{proof}
First note that $|z_n|\to |z|$ strongly. 
Then $1\pm |z_n|$ converges strongly to $1\pm |z|$. Clearly $\|1+|z_n|\|\le 2$. Let us check that also $\|(1-|z_n|)^{-1}\|$ are uniformly bounded:
$$
\|(1-|z_n|)^{-1}\|=\|\sum_{k=0}^\infty |z_n|^k\|\le \sum_{k=0}^\infty \|z_n^k\|\le \frac{1}{1-\|z\|}<\infty.
$$
Therefore, $(1-|z_n|)^{-1}\to (1-|z|)^{-1}$ strongly, and since the product is strongly continuous on norm bounded sets, the proof follows.
\end{proof}
\begin{defi}
Let  $z\in\d$. We define $cr(0,z)\in  CR(\ell_{-\infty},\ell_0,\ell_z, \ell_\infty)$,  for $\ell_z=\left[\left( \begin{array}{c} 1 \\ z \end{array}\right)\right]$, as the endomorphism
$$
cr(0,z):\ell_z\to \ell_z\ , \ \   cr(0,z)\big( \left( \begin{array}{c} 1 \\ z \end{array}\right)a\big)= \left( \begin{array}{c} 1 \\ z \end{array}\right) (1+|z|)(1-|z|)^{-1} a,
$$
for $a\in\a$.
\end{defi}
We use the action of $\u(\theta)$ to extend this definition to any pair $z_0\ne z_1\in\d$.
\begin{defi}\label{cr general}
Let $z_0,z_1\in\d$, $z_0\ne z_1$. Pick $\tilde{g}\in\u(\theta)$ such that $\tilde{g}\cdot 0=z_0$ and denote $z=\tilde{g}^{-1}\cdot z_1$.
We define
$$
cr(z_0,z_1)=\tilde{g}\  cr(0,z) \tilde{g}^{-1}.
$$
\end{defi}
Before checking that the definition does not depend on the choice of $\tilde{g}$, we remark the following.
Let $\delta$ be the unique geodesic of $\d$ such that $\delta(0)=z_0$ and $\delta(1)=z_1$, and let 
$$
z_{-\infty}={\rm SOT}-\lim_ {t\to -\infty}\delta(t) \ \hbox{ and }  z_{+\infty}={\rm SOT}-\lim_ {t\to+\infty}\delta(t).
$$
Then $cr(z_0,z_1)\in CR(\ell_{z_{-\infty}}, \ell_{z_0}, \ell_{z_1}, \ell_{z_{+\infty}})$, because $\tilde{g}$ is a module homomorphism which maps $\ell_z$ onto $\ell_{z_1}$. Indeed,
if $\xx=\left(\begin{array}{c} 1 \\ z \end{array}\right) a\in\ell_z$, then clearly
$$
\tilde{g}\xx=\left(\begin{array}{c} 1 \\ \tilde{g}\cdot z \end{array}\right) (g_{11}+g_{12}z)a=\left(\begin{array}{c} 1 \\ z_1 \end{array}\right)(g_{11}+g_{12}z)a\in \ell_{z_1}.
$$
 Let us check that $cr(z_1,z_2)$ is well defined, i.e.,  that it does not depend on the choice of $\tilde{g}$. 

To prove this, recall from Remark \ref{isotropiaD} that  if $\tilde{k}\in\u(\theta)$ satisfies $\tilde{k}\cdot 0=0$, then
$\tilde{k}=\left( \begin{array}{cc} u_1 & 0 \\ 0 & u_2 \end{array} \right)$, with $u_1,u_2\in\u_\a$.
\begin{prop}
With the above notations, the endomorphism 
$$
cr(z_0,z_1)\in  CR(\ell_{z_{-\infty}}, \ell_{z_0}, \ell_{z_1}, \ell_{z_{+\infty}})
$$
does not depend on the choice of $\tilde{g}$. Namely, if $\tilde{h}\in\u(\theta)$ satisfies $\tilde{h}\cdot 0 =z_0$, and $z'=\tilde{h}^{-1}\cdot z_1$, then
$$
\tilde{h} \ cr(0,z') \tilde{h}^{-1}= \tilde{g} \ cr(0,z) \tilde{g}^{-1}.
$$
\end{prop}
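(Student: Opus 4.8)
The plan is to reduce everything to the isotropy element $\tilde{k}:=\tilde{g}^{-1}\tilde{h}$ and then to exploit the covariance of the scalar factor $(1+|z|)(1-|z|)^{-1}$ under unitary conjugation.

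First I would observe that $\tilde{k}\cdot 0=\tilde{g}^{-1}\cdot(\tilde{h}\cdot 0)=\tilde{g}^{-1}\cdot z_0=0$, so by Remark \ref{isotropiaD} we may write $\tilde{k}=\left(\begin{array}{cc} u_1 & 0 \\ 0 & u_2 \end{array}\right)$ with $u_1,u_2\in\u_\a$. Since $z_1=\tilde{g}\cdot z$ and $z'=\tilde{h}^{-1}\cdot z_1$, we get $z'=(\tilde{h}^{-1}\tilde{g})\cdot z=\tilde{k}^{-1}\cdot z$; evaluating the action formula (\ref{accion en D}) for the diagonal matrix $\tilde{k}^{-1}=\left(\begin{array}{cc} u_1^* & 0 \\ 0 & u_2^* \end{array}\right)$ yields the clean relation $z'=u_2^*zu_1$ (hence also $u_2z'=zu_1$). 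Because $\tilde{h}=\tilde{g}\tilde{k}$, the identity to be proved becomes, after cancelling the invertible $\tilde{g}$ on the left and $\tilde{g}^{-1}$ on the right, equivalent to
$$
\tilde{k}\, cr(0,z')\,\tilde{k}^{-1}=cr(0,z),
$$
an equality of endomorphisms of $\ell_z$; note that $\tilde{k}\cdot z'=z$, so $\tilde{k}$ does carry $\ell_{z'}$ onto $\ell_z$ and both sides are genuinely endomorphisms of the same module.

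Next I would compute the left-hand side directly in the coordinates of the definition, tracking a generic generator $\left(\begin{array}{c} 1 \\ z \end{array}\right)a$ of $\ell_z$ successively through $\tilde{k}^{-1}$, then $cr(0,z')$, then $\tilde{k}$. Applying $\tilde{k}^{-1}$ gives $\left(\begin{array}{c} 1 \\ z' \end{array}\right)u_1^*a$; applying $cr(0,z')$ multiplies the coefficient by $(1+|z'|)(1-|z'|)^{-1}$; and applying $\tilde{k}$ and using $u_2z'=zu_1$ returns us to $\ell_z$. The computation collapses to
$$
\left(\begin{array}{c} 1 \\ z \end{array}\right)a\longmapsto \left(\begin{array}{c} 1 \\ z \end{array}\right) u_1(1+|z'|)(1-|z'|)^{-1}u_1^*\, a.
$$
Comparing with the definition of $cr(0,z)$, the entire statement reduces to the single operator identity
$$
u_1(1+|z'|)(1-|z'|)^{-1}u_1^*=(1+|z|)(1-|z|)^{-1}.
$$

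Finally I would settle this identity by functional calculus. From $z'=u_2^*zu_1$ one computes $|z'|^2=z'^*z'=u_1^*z^*zu_1=u_1^*|z|^2u_1$, whence by uniqueness of the positive square root $|z'|=u_1^*|z|u_1$; therefore $u_1f(|z'|)u_1^*=f(|z|)$ for every continuous $f$ on the spectrum, and in particular for $f(t)=(1+t)(1-t)^{-1}$, which is exactly the required identity. I expect the only real (and modest) obstacle to be bookkeeping: keeping the two unitaries $u_1,u_2$ in their correct slots while passing between the action on $\d$ (which produces $u_2^*zu_1$) and the action on the module generators, and recognizing that it is precisely $u_1$ — the factor acting on the first coordinate — that conjugates $|z'|$ back into $|z|$.
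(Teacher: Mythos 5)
Your proposal is correct and follows essentially the same route as the paper's proof: both reduce to the diagonal isotropy element $\tilde{k}=\tilde{g}^{-1}\tilde{h}=\left(\begin{smallmatrix} u_1 & 0 \\ 0 & u_2 \end{smallmatrix}\right)$, compute $z'=u_2^*zu_1$, trace a generator $\left(\begin{smallmatrix}1\\ z\end{smallmatrix}\right)a$ through the conjugated endomorphism, and conclude via the identity $|u_2^*zu_1|=u_1^*|z|u_1$ (uniqueness of the positive square root, equivalently your functional-calculus step with $f(t)=(1+t)(1-t)^{-1}$). Your explicit isolation of the scalar identity $u_1(1+|z'|)(1-|z'|)^{-1}u_1^*=(1+|z|)(1-|z|)^{-1}$ is just a slightly more modular phrasing of the paper's in-line computation; no gap.
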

\begin{proof}
Since $\tilde{h}\cdot 0=\tilde{g}\cdot 0$, it follows that $(\tilde{g}^{-1}\tilde{h})\cdot 0 = 0$, and therefore
$\tilde{h}=\tilde{g} \left(\begin{array}{cc} u_1 & 0 \\ 0 & u_2 \end{array} \right)$, for $u_1,u_2\in\u_\a$. Then 
$$
z'=\tilde{h}^{-1}\cdot z_1= \left(\begin{array}{cc} u_1^* & 0 \\ 0 & u_2^* \end{array} \right)\cdot (\tilde{g}^{-1}\cdot z_1)=\left(\begin{array}{cc} u_1^* & 0 \\ 0 & u_2^* \end{array} \right)\cdot z= u_2^*zu_1,
$$
and
$$
\tilde{h} \ cr (0,z') \tilde{h}^{-1}=\tilde{g} \ \left(\begin{array}{cc} u_1 & 0 \\ 0 & u_2 \end{array} \right) \ cr (0, u_2^*zu_1) \left(\begin{array}{cc} u_1^* & 0 \\ 0 & u_2^* \end{array} \right) \tilde{g}^{-1}.
$$
Thus, we must show that $\left(\begin{array}{cc} u_1 & 0 \\ 0 & u_2 \end{array} \right) \ cr (0, u_2^*zu_1) \left(\begin{array}{cc} u_1^* & 0 \\ 0 & u_2^* \end{array} \right)=cr(0,z)$.
Let us see  how the left hand side endomorphism transforms the element $\left( \begin{array}{c} 1 \\ z \end{array} \right) a\in\ell_z$. First, it is sent to  
$$
\left(\begin{array}{cc} u_1^* & 0 \\ 0 & u_2^* \end{array} \right)\left( \begin{array}{c} 1 \\ z \end{array} \right)a=\left( \begin{array}{c} 1 \\ u_2^*zu_1 \end{array} \right)u_1^*a.
$$
The map $cr(0,z')$ maps this element to
$$
\left(\begin{array}{c} 1 \\ u_2^*zu_1 \end{array} \right)(1+|u_2^*zu_1|)(1-|u_2^*zu_1|)^{-1} u_1^*a.
$$
Note that $|u_2^*zu_1|=((u_2^*zu_1)^* u_2^*zu_1)^{1/2}=(u_1^*z^*zu_1)^{1/2}=u_1^*|z|u_1$. Therefore, the above element equals
$$
\left(\begin{array}{c} u_1^* \\ u_2^*z \end{array} \right)(1+|z|)(1-|z|)^{-1} a.
$$
Finally, multiplying on the left by the matrix $\left(\begin{array}{cc} u_1 & 0 \\ 0 & u_2 \end{array} \right)$ yields
$$
\left(\begin{array}{c} 1 \\ z \end{array} \right) (1+|z|)(1-|z|)^{-1}a=cr(0,z)\left(\begin{array}{c} 1 \\ z \end{array} \right)a.
$$
\end{proof}
\begin{rem}
If $\a$ is a von Neumann algebra, by the result of Dixmier and Mar\'echal \cite{dixmar}, $\d\cap G_\a$ is strongly dense in $\d$. For  $z\in\d\cap G_\a$, the set $CR(\ell_{-\infty}, \ell_0, \ell_z,\ell_\infty)$ consists of a single element $cr(0,z)$. If $z\in\d$ is non invertible, there exist $z_n\in\d$ which are invertible such that $z_n\to z$ strongly and $\|z_n\|\le \|z\|$. Let us see that $cr(0,z_n)$ converge in some sense to $cr(0,z)$. First note that $cr(0,z_n), cr(0,z)$ are endomorphisms of different submodules. In order to compare them, we can regard them as $\a$-module morphisms of $\a^2$, embedding each module in $\a^2$ using the $\theta$-orthogonal projections $p_{\ell_{z_n}}, p_{\ell_z}$ onto the submodules $\ell_{z_n}, \ell_z$, respectively. For $z'\in\d$, 
$$
p_{\ell_{z'}}(\xx)=(1-|z'|^2)^{-1/2}\theta(\left( \begin{array}{c} 1 \\ z' \end{array} \right), \xx )\left( \begin{array}{c} 1 \\ z' \end{array} \right) (1-|z'|^2)^{-1/2}.
$$
We claim that $cr(0,z_n)p_{\ell_{z_n}}(\xx)\to cr(0,z)p_{\ell_z}(\xx)$ strongly in $\a^2$.
By Proposition \ref{cont sot}, we know that $(1+|z_n|)(1-|z_n|)^{-1}\to (1+|z|)(1-|z|)^{-1}$ strongly. By a similar argument, it also holds that $(1-|z_n|^2)^{-1/2}\to (1-|z|^2)^{-1/2}$ strongly. Also these operators are uniformly bounded. Therefore, using that the product is strongly continuous in bounded sets, our claim follows.  
\end{rem}

\begin{rem}
As a corollary we get that, even if the set $CR(\ell_1,\ell_2,\ell_3,\ell_4)$ may be empty for general $\ell_1,\ell_2,\ell_3,\ell_4$, the particular set $CR(\ell_{-\infty},\ell_0,\ell_z, \ell_\infty)$ is not, and $cr(0,z)$ is a distinguished element of this set. 
\end{rem}

As a first approximation of the deep relationship between the cross ratio and the metric in $\pa^\theta$, we can state the following:
\begin{teo}\label{nice try}
Let $z\in\d$, then 
$$
\frac12\|cr(0,z)\|_{\b(\ell_z)}=d(0,z),
$$
where $\|\ \ \|_{\b(\ell_z)}$ denotes the  norm of operators acting in $\ell_z\subset \a^2$.
\end{teo}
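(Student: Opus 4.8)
The plan is to reduce $\|cr(0,z)\|_{\b(\ell_z)}$ to a single scalar functional-calculus estimate. By definition $cr(0,z)$ sends $\left(\begin{array}{c} 1 \\ z\end{array}\right)a$ to $\left(\begin{array}{c} 1 \\ z\end{array}\right)Ta$ with $T=(1+|z|)(1-|z|)^{-1}$; so in the coordinate $a$ attached to the generator $\xx_z=\left(\begin{array}{c} 1 \\ z\end{array}\right)$, the endomorphism is nothing but left multiplication by the positive invertible operator $T$. The whole statement therefore rests on computing the operator norm of this multiplication on $\ell_z$, viewed as a submodule of $\a^2$ with its inherited norm.

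First I would make the inner product on $\ell_z$ explicit. Since $\langle\xx_z a,\xx_z a\rangle=a^*(1+z^*z)a$, one has $\|\xx_z a\|=\|(1+z^*z)^{1/2}a\|$ and likewise $\|cr(0,z)(\xx_z a)\|=\|(1+z^*z)^{1/2}Ta\|$. Writing $R=(1+z^*z)^{1/2}$ and substituting $b=Ra$, the operator norm equals the norm of $RTR^{-1}$ acting by left multiplication on the unital algebra $\a$, i.e. the C$^*$-norm of $RTR^{-1}\in\a$. The decisive point is that $R$, $T$ and the factors $1\pm|z|$ are all continuous functions of the single positive element $|z|$, hence commute; thus $RTR^{-1}=T$ and
$$
\|cr(0,z)\|_{\b(\ell_z)}=\|T\|=\|(1+|z|)(1-|z|)^{-1}\|.
$$
The same conclusion follows verbatim if $\ell_z$ is measured with the form $\theta$ instead of $\langle\ ,\ \rangle$, because $\theta(\xx_z a,\xx_z a)=a^*(1-z^*z)a$ and $(1-z^*z)^{1/2}$ again commutes with $T$.

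What remains is a scalar computation. As $|z|\ge 0$ with $\|z\|=\max\sigma(|z|)$ and $s\mapsto(1+s)(1-s)^{-1}$ positive and increasing on $[0,1)$, the functional calculus gives $\|(1+|z|)(1-|z|)^{-1}\|=\frac{1+\|z\|}{1-\|z\|}$. By the distance formula $d(0,z)=\frac12\log\frac{1+\|z\|}{1-\|z\|}$ of Remark \ref{1/2}, the right-hand side is exactly $e^{2d(0,z)}$, so that $\|cr(0,z)\|_{\b(\ell_z)}=e^{2d(0,z)}$; equivalently, since $cr(0,z)$ is positive, $\frac12\|\log cr(0,z)\|=\frac12\log\|cr(0,z)\|_{\b(\ell_z)}=d(0,z)$, which is the claimed relation between the cross ratio and the hyperbolic distance $d(0,z)$.

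I expect the main obstacle to be precisely the bookkeeping in the second step: the endomorphism is plain multiplication by $T$ only after one absorbs the weight $1+z^*z$ (or $1-z^*z$) carried by the inner product of the non-normalized generator $\xx_z$, and the fact that this weight cancels hinges on every operator in sight being a function of the single element $|z|$. Once that cancellation is secured, the remaining content is the elementary scalar estimate above together with the previously established distance formula.
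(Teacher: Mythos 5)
Your computation is correct, and structurally it follows the same route as the paper's proof (express $cr(0,z)$ as multiplication by a coefficient that is a function of $|z|$, then evaluate the operator norm by functional calculus); but you are more careful on two points, and in being careful you expose a genuine inconsistency in the paper itself. With the definition of $cr(0,z)$ given in Section 9 — coefficient $T=(1+|z|)(1-|z|)^{-1}$, no logarithm — your conclusion $\|cr(0,z)\|_{\b(\ell_z)}=\|T\|=\frac{1+\|z\|}{1-\|z\|}=e^{2d(0,z)}$ is the correct one, and the literal identity $\frac12\|cr(0,z)\|_{\b(\ell_z)}=d(0,z)$ of Theorem \ref{nice try} is then \emph{false}: already for $\a=\CC$ and $z=r\in(0,1)$ one has $\frac{1+r}{1-r}\neq\log\frac{1+r}{1-r}$. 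The paper's own proof obtains the stated identity only by silently writing $cr(0,z)\xx={\bf e_z}\,\log\left((1+|z|)(1-|z|)^{-1}\right)a$, i.e.\ by replacing the cross-ratio coefficient with its logarithm; that substitution contradicts the definition of $cr(0,z)$ but is the version consistent with Theorem \ref{el teo} (where $mod_0(\Log_0(z))=\log(cr(0,z)_0)$) and with the commutative formula $e^{|\Log_{z_0}z_1|}=cr(z_0,z_1)$ of Section 11. So the corrected statement is $\frac12\|\log cr(0,z)\|_{\b(\ell_z)}=d(0,z)$, equivalently $\frac12\log\|cr(0,z)\|_{\b(\ell_z)}=d(0,z)$ since $T\ge 1$ — which is exactly the form your last step produces; your ``equivalently'' is in fact a correction, and it would be worth saying so explicitly rather than presenting the two identities as the same claim.

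Two smaller points in your favour. The paper computes with the $\theta$-normalized generator ${\bf e_z}$ and uses $\langle{\bf e_z},{\bf e_z}\rangle=1$, which holds for the form $\theta$ but not for the module inner product $\langle\ ,\ \rangle$ of $\a^2$ (there $\langle{\bf e_z},{\bf e_z}\rangle=(1-z^*z)^{-1/2}(1+z^*z)(1-z^*z)^{-1/2}$), so the paper's proof is only clean if $\|\cdot\|_{\b(\ell_z)}$ is read as the $\theta$-norm, even though the statement says ``operators acting in $\ell_z\subset\a^2$''. Your observation that the weight $R$ (whether $(1+z^*z)^{1/2}$ or $(1-z^*z)^{1/2}$) commutes with $T$, all of them being continuous functions of the single positive element $|z|$, shows the norm is $\|T\|$ under either convention, so this ambiguity is harmless — a point the paper never addresses. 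Apart from flagging the missing logarithm, your argument is complete.
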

\begin{proof}
Choose for $\ell_z$ the unital basis ${\bf e_z}=\left( \begin{array}{c} (1-z^*z)^{-1/2} \\ z(1-z^*z)^{-1/2} \end{array}\right)$. Then for any $\xx={\bf e_z} a\in\ell_z$,
$$
cr(0,z)\xx={\bf e_z} \log((1+|z|)(1-|z|)^{-1} a,
$$
and thus 
$$
<cr(0,z)\xx, cr(0,z)\xx>=a^* \log((1+|z|)(1-|z|)^{-1}<{\bf e_z},{\bf e_z}>\log((1+|z|)(1-|z|)^{-1}a
$$
$$
=a^*(\log((1+|z|)(1-|z|)^{-1})^2a.
$$
Since $(\log((1+|z|)(1-|z|)^{-1})^2\le \|\log((1+|z|)(1-|z|)^{-1}\|^2$, it follows that 
$$
a^*(\log((1+|z|)(1-|z|)^{-1})^2a\le a^*a \|\log((1+|z|)(1-|z|)^{-1}\|^2,
$$
and therefore
$$
\|<cr(0,z)\xx, cr(0,z)\xx>\|^{1/2}\le  \|\log((1+|z|)(1-|z|)^{-1}\| \|a^*a\|^{1/2}
$$
$$
=\|\log((1+|z|)(1-|z|)^{-1}\| \|\xx\|.
$$
This implies that $\|cr(0,z)\|_{\b(\ell_z)}\le \|\log((1+|z|)(1-|z|)^{-1}\|$. 
Note that 
$$cr(0,z){\bf e_z}={\bf e_z}\log((1+|z|)(1-|z|)^{-1},
$$
so that 
$$
\|cr(0,z){\bf e_z}\|^2=\|<{\bf e_z} \log((1+|z|)(1-|z|)^{-1},{\bf e_z} \log((1+|z|)(1-|z|)^{-1}\|
$$
$$
=\|\log((1+|z|)(1-|z|)^{-1}\|^2,
$$
i.e., 
$$
\|cr(0,z)\|_{\b(\ell_z)}= \|\log((1+|z|)(1-|z|)^{-1}\|=2 d(0,z).
$$
\end{proof}

\section{The coefficient bundle.}

Consider the slight variant of  the commutative diagram in  (\ref{diagrama}):
$$
\xymatrix{
& \k_\theta \ar[ld]_{\hat{\pi_\theta }}\ar[rd]^{\tilde{\pi}_\theta} \\
\q_\rho \ar[rr]^{\simeq}
&& \d ,
}
$$
Recall (from the end of Section 4), that $\q_\rho$ denotes the space of $\theta$-orthogonal rank one projections (considered here the coordinate free  version of $\pa^\theta$). Let us introduce the {\it canonical bundle}
$$
{\bf \xi}\to\q_\rho,
$$
whose fiber over $q\in\q_\rho$ is the module $R(q)$.  This is a fiber bundle of right $\a$-modules, which has a canonical connection. Elements of $\xi$ are pairs $(q,\xx)$, with $q\in\q_\rho$ and $q(\xx)=\xx$. 

Let $q\in\q_\rho$ and $\varphi:R(q)\to R(q)$ a right module endomorphism. Pick a normalized generator $\xx\in R(q)$: $\theta(\xx,\xx)=1$ (i.e., an element of $R(q)$ in $\k_\theta$). Then the endomorphism $\varphi$ is determined by the value $\varphi(\xx)=\xx a$. That is, for any element $\yy=\xx\lambda\in R(q)$, $\varphi(\yy)=\xx a \lambda$. In other words, if we regard $\xx$ as a basis for $R(q)$, $\varphi$ can be expressed as $\lambda\mapsto a\lambda$. We call $a\in\a$  the matrix of $\varphi$ in the basis $\xx$. If  $\xx'$ is another basis of $R(q)$ in $\k_\theta$, then there exists a unitary $u\in\u_\a$ such that $\xx'=\xx u$. If $b$ is the matrix of $\varphi$ in the basis $\xx'$  (i.e., $\varphi(\xx')=\xx' b$), then
$$
\xx u b=\xx' b=\varphi(\xx')=\varphi(\xx u)=\varphi(\xx) u=\xx' au.
$$
Then $ub=au$, which means that the matrix of $\varphi$ in the basis $\xx'$ is  $b=u^*au$ .

This shows that we can regard the set $End(R(q))$ of endomorphisms of $R(q)$, as the set of pairs $(\xx,a)$, where $\xx\in\k_\theta$ with $q(\xx)=\xx$, and $a\in\a$, with the identification
$$
(\xx,a)\sim (\xx u, u^*au) , \ u\in\u_\a.
$$ 
Then, the map $\Gamma\to\q_\rho$ defined by $(q,\varphi)\mapsto q$ for $q\in\q_\rho$ and $\varphi\in End(R(q))$, is a fiber bundle which we call the {\it coefficient bundle}; alternatively $(\xx,a)\sim (\xx u, u^*au)\mapsto [\xx]$. Each fiber of  $\Gamma$ is a  C$^*$-algebra, which is isomorphic to $\a$. The canonical connection of the bundle $\xi$ induces a connection in $\Gamma$ , by the rule:
$$
(D_X \varphi)(\yy)=(D_X \varphi)\yy+\varphi (D_X\yy).
$$
Here, $\varphi$ is a local cross section of $\Gamma$ and $\yy$ is a local cross section of $\xi$. We remark that the connections of $\xi$ and $\Gamma$ are compatible with the action of $\u(\theta)$.

We define  the {\it basic 1-form}. Given $\xx\in\k_\theta$ and $X\in(T\q_\rho)_q$, with $q=\xx \theta(\xx, \ \ )$, put
$$
\kappa_\xx(X)=X\xx.
$$
Note that $X$ is a matrix in $M_2(\a)$, $\theta$-symmetric and $q$-codiagonal: $X\xx\in\xx^{\perp_\theta}=N(q)$. 

Given $X,Y\in(T\q_\rho)_q$, we define the product
$$
\langle X,Y\rangle_\xx=-\theta(\kappa_\xx(X),\kappa_\xx(Y))=-\theta(X\xx,Y\xx).
$$
If the generator $\xx$ is changed for $\xx'=\xx u$, we have
$$
\langle X,Y\rangle_{\xx'}=-\theta(X(\xx u),Y(\xx u))=-u^*\theta(X\xx,Y\xx)u=u^*\langle X,Y\rangle_\xx u.
$$
This means that, given $q=[\xx]=[\xx']$, the product $\langle X,Y\rangle_q$ is well defined as an element of the fiber $\Gamma_q$.

This product is therefore a Hilbertian product in $T\q_\rho$, with values in $\Gamma$. To this efect, note that $T\q_\rho$ is a right module over the bundle $\Gamma$ of coefficients. Indeed, if we fix $\xx\in\k_\theta$ with $q=\xx\theta(\xx,\ \ )$, 
the map $X\mapsto\kappa_\xx(X)=X\xx$ from $(T\q_\rho)_q$ to $N(q)$ is one to one. If we change $\xx$ with $\xx u$, $\kappa_\xx(X)$ changes to $\kappa_{\xx u}(X)=\kappa_\xx(X)u$. If $X\in(T\q_\rho)_q$ and $\varphi\in\Gamma_q$, we define $X\varphi$ as $\kappa_\xx(X\varphi)=X\xx a$, where $\varphi$ is represented by the class of $(\xx,a)$. With this definition we have
$$
\langle X,Y\varphi\rangle_q=\langle X,Y\rangle_q \varphi.
$$

\subsection{The cross ratio, the logarithm and the exponential.}

We have just defined a Hilbertian $\Gamma$-valued structure in $\q_\rho\simeq\pa^\theta$, or, equivalently, in $\d$.
 In particular, the product
$$
\bi \Log_0(z), \Log_0(z)\bd_0
$$
takes values in the set of  endomorphisms of $\ell_0=\left[\left(\begin{array}{c} 1 \\ 0 \end{array}\right)\right]$, where $\Log_0$ is defined in Corollary \ref{64}. It is a positive module endomorphism (given by multiplying the generator ${\bf e_1}$ by a positive element of $a$). Thus, it has a unique positive square root
$ \bi \Log_0(z), \Log_0(z) \bd_0^{1/2}$, which we shall call the {\it $\theta$-modulus}   $mod_0(\Log_0(z))$ of $\Log_0(z)$.  Explicitly, in the generator ${\bf e_1}$, $mod_0(\Log_0(z))$ consists in multiplying the generator by $\log\left( (1+|z|)(1-|z|)^{-1}\right)$.

On the other hand, we saw that, for $z\in\d$,  the endomorphism of $\ell_z$ denoted by $cr(0,z)$, is given by the same coefficient $\log\left( (1+|z|)(1-|z|)^{-1}\right)$, which multiplies the generator $\left(\begin{array}{c} 1 \\ z \end{array}\right)$ of $\ell_z$. 

We shall translate the endomorphism $cr(0,z)$ from $\ell_z$ to $\ell_0$ by means of the parallel transport of $\pa^\theta$, along the geodesic $\delta$, with $\delta(0)=\ell_0$ and $\delta(1)=\ell_z$ (i.e., the same former $\delta$, which under the identification $\d\simeq\pa^\theta$ joins $\delta(0)=0$ and $\delta(1)=z$ in $\d$: $\delta(t)=\omega \tanh(t|\alpha|)$).

The parallel transport of elements of $\d$ (or $\pa^\theta$)  along the geodesic $\delta(t)=e^{t\left(\begin{array}{cc} 0 & \alpha^* \\ \alpha & 0 \end{array}\right)} \cdot 0$, 
where $\alpha$ is, as in Remark 6.1.2  
$$
\alpha=z\sum_{k=0}^\infty \frac{1}{2k+1} (z^*z)^k,
$$
is given by the left action of the invertible matrix \\
$e^{t\left(\begin{array}{cc} 0 & \alpha^* \\ \alpha & 0 \end{array}\right)}:\ell_0\to \ell_{\delta(t)}$. The endomorphism $cr(0,z)$ of $\ell_z$ is transported to $\ell_0$  as 
$$
cr(0,z)_0:=e^{-\left(\begin{array}{cc} 0 & \alpha^* \\ \alpha & 0 \end{array}\right)} cr(0,z) e^{\left(\begin{array}{cc} 0 & \alpha^* \\ \alpha & 0 \end{array}\right)} :\ell_0\to \ell_0.
$$
Our main result (for the origin) is the following:
\begin{teo}\label{el teo}
With the current notation, if $z\in\d$ (or $\ell_z\in\pa^\theta$),
\begin{equation}\label{exp y log}
e^{mod_0(\Log_0(z))}=cr(0,z)_0 \ \hbox{ \rm{or, equivalently, }} \  mod_0(\Log_0(z))=\log(cr(0,z)_0),
\end{equation}
where  the exponential in the left hand equality is the usual exponential of $\a$, $\log$ in the right hand equality is the usual logarithm of $G$, and each endomorphism of $\ell_0$ is identified with its coefficient in the basis ${\bf e_1}=\left(\begin{array}{c} 1 \\ 0 \end{array}\right).$  
\end{teo}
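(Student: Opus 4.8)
The plan is to prove the identity by expressing both members as multiplication operators on the module $\ell_0$ in the basis ${\bf e_1}=\begin{pmatrix} 1 \\ 0 \end{pmatrix}$ and checking that their coefficients in $\a$ agree. Every endomorphism of $\ell_0$ has the form ${\bf e_1}b\mapsto {\bf e_1}cb$ for a unique $c\in\a$, and the functional calculus (the exponential on the left, the logarithm on the right of (\ref{exp y log})) acts directly on this coefficient; hence it suffices to verify a single identity in $\a$.

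For the left-hand side, I would invoke Corollary \ref{64} together with the explicit description of the $\theta$-modulus given just above, by which $mod_0(\Log_0(z))$ is multiplication by $\log\left((1+|z|)(1-|z|)^{-1}\right)$. Applying the exponential of $\a$ to this coefficient, $e^{mod_0(\Log_0(z))}$ becomes multiplication by $(1+|z|)(1-|z|)^{-1}$. This positive element of $\a$ is the target, so the whole proof reduces to showing that $cr(0,z)_0$ carries the same coefficient.

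The substance of the argument lies in the right-hand side. Write $A=\begin{pmatrix} 0 & \alpha^* \\ \alpha & 0 \end{pmatrix}$, so that $cr(0,z)_0=e^{-A}\,cr(0,z)\,e^{A}$. By Lemma \ref{lema62} the first column of $e^{A}$ is $\begin{pmatrix} \cosh|\alpha| \\ \omega\sinh|\alpha| \end{pmatrix}=\begin{pmatrix} 1 \\ z \end{pmatrix}\cosh|\alpha|$, using $z=\omega\tanh|\alpha|$; thus the parallel transport $e^{A}$ carries the generator ${\bf e_1}$ of $\ell_0$ to the generator $\begin{pmatrix} 1 \\ z \end{pmatrix}\cosh|\alpha|$ of $\ell_z$, and conversely $e^{-A}$ sends $\begin{pmatrix} 1 \\ z \end{pmatrix}d$ back to ${\bf e_1}(\cosh|\alpha|)^{-1}d$. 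I would then compute $cr(0,z)_0({\bf e_1})$ by applying these three maps in turn: transporting ${\bf e_1}$ into $\ell_z$ introduces a factor $\cosh|\alpha|$, the Definition of $cr(0,z)$ multiplies by $(1+|z|)(1-|z|)^{-1}$, and transporting back introduces $(\cosh|\alpha|)^{-1}$. The crucial observation is that $|\alpha|=\tfrac12\log\left((1+|z|)(1-|z|)^{-1}\right)$ is a continuous function of the single positive operator $|z|$, so $\cosh|\alpha|$ commutes with $(1+|z|)(1-|z|)^{-1}$; the two transport factors therefore cancel and $cr(0,z)_0$ is multiplication by $(1+|z|)(1-|z|)^{-1}$, exactly matching $e^{mod_0(\Log_0(z))}$.

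The main obstacle is this last step: one must ensure that conjugation by $e^{A}$ genuinely returns an endomorphism of $\ell_0$ with an unchanged coefficient, rather than distorting it. This is guaranteed by the commutativity just noted---every operator in sight is a function of $|z|$ and shares the partial isometry $\omega$---which is precisely the reflection, at the level of the coefficient bundle, of the geometric fact that parallel transport along the geodesic $\delta$ fixes the spectral data of $z$. Once commutativity is invoked the remaining manipulations are routine, and matching the two coefficients completes the proof.
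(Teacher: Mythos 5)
Your proposal is correct and takes essentially the same approach as the paper's own proof: writing $A=\left(\begin{array}{cc} 0 & \alpha^* \\ \alpha & 0 \end{array}\right)$, both arguments reduce to comparing the images of the generator ${\bf e_1}$, use Lemma \ref{lema62} to compute $e^{A}{\bf e_1}=\left(\begin{array}{c} 1 \\ z \end{array}\right)\cosh(|\alpha|)$, apply the definition of $cr(0,z)$, and transport back with $e^{-A}$. The one refinement you add is to state explicitly that $\cosh(|\alpha|)$ commutes with $(1+|z|)(1-|z|)^{-1}$ (both being continuous functions of $|z|$), a cancellation the paper performs tacitly.
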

\begin{proof}
Let us prove the first equality. Since we are comparing endomorphisms of $\ell_0$, it suffices to show that they carry the generator ${\bf e_1}$ to the same element in $\a^2$. Note that
$$
cr(0,z)_0({\bf e_1})= e^{-\left(\begin{array}{cc} 0 & \alpha^* \\ \alpha & 0 \end{array}\right)} cr(0,z) e^{\left(\begin{array}{cc} 0 & \alpha^* \\ \alpha & 0 \end{array}\right)}\left(\begin{array}{c} 1 \\ 0 \end{array}\right)
$$
$$
= e^{-\left(\begin{array}{cc} 0 & \alpha^* \\ \alpha & 0 \end{array}\right)} cr(0,z) \left( \begin{array}{c} \cosh(|\alpha|) \\ \omega \sinh(|\alpha|) \end{array}\right);
$$
using that $\omega \tanh(|\alpha|)=\delta(1)=z$ (Lemma \ref{lema62}), this yields
$$
e^{-\left(\begin{array}{cc} 0 & \alpha^* \\ \alpha & 0 \end{array}\right)} cr(0,z) \left( \begin{array}{c} 1 \\ z  \end{array}\right) \cosh(|\alpha|)=e^{-\left(\begin{array}{cc} 0 & \alpha^* \\ \alpha & 0 \end{array}\right)} \left( \begin{array}{c} 1 \\ z  \end{array}\right) (1+|z|)(1-|z|)^{-1} \cosh(|\alpha|).
$$
By the same computation that showed that $e^{\left(\begin{array}{cc} 0 & \alpha^* \\ \alpha & 0 \end{array}\right)} \left( \begin{array}{c} 1 \\ 0  \end{array}\right)=\left( \begin{array}{c} 1 \\ z  \end{array}\right)\cosh(|\alpha|)$ (see the proof of Lemma \ref{lema62}), we have that 
$$
e^{-\left(\begin{array}{cc} 0 & \alpha^* \\ \alpha & 0 \end{array}\right)} \left( \begin{array}{c} 1 \\ z  \end{array}\right)=\left( \begin{array}{c} 1 \\  0  \end{array}\right)\cosh(|\alpha|)^{-1},
$$
i.e.,
$$
cr(0,z)_0({\bf e_1})=\left( \begin{array}{c} 1 \\  0  \end{array}\right)(1+|z|)(1-|z|)^{-1}.
$$
On the other hand, the endomorphism $mod_0(\Log_0(z)$ sends ${\bf e_1}$ to ${\bf e_1}\log\left((1+|z|)(1-|z|)^{-1}\right)$, and thus
$$
e^{mod_0(\Log_0(z))}({\bf e_1})=\left( \begin{array}{c} 1 \\  0  \end{array}\right)(1+|z|)(1-|z|)^{-1}.
$$
\end{proof}
As in Definition \ref{cr general},  let $z_0\ne z_1\in \d$ ($\ell_{z_0}\ne\ell_{z_1}\in\pa^\theta$). Pick $\tilde{g}\in\u(\theta)$ such that $\tilde{g}\cdot 0=z_0$, and denote by $z=\tilde{g}^{-1}\cdot z_1$ as before. Let $\delta$ be the geodesic such that $\delta(0)=0$ and $\delta(1)=z_1$. Then $\delta_{z_0,z_1}=\tilde{g}\cdot \delta$ is the geodesic which joins $z_0$ and $z_1$ at $t=0$ and $t=1$, respectively. Recall that $cr(z_0,z_1)=\tilde{g}cr(z_0,z_1)\tilde{g}^{-1}$. Likewise,
we put
$$
\Log_{z_0}(z_1):=\tilde{g}\Log_0(z)\tilde{g}^{-1} , \  \hbox{ and } \ mod_{z_0}(z_1)=\bi \Log_{z_0}(z_1), \Log_{z_0}(z_1) \bd_{z_0}^{1/2},
$$
where $\bi \varphi, \psi \bd_{z_0}=\tilde{g}\bi \tilde{g}\varphi\tilde{g}^{-1}, \tilde{g}\psi\tilde{g}^{-1}\bd_0 \tilde{g}^{-1}$, and  $\Log_{z_0}$ is the inverse of the exponential $exp_{z_0}:(T\d)_{z_0}\to \d$. It is not difficult to verify that these definitions do not depend on the choice of $\tilde{g}$.

Finally, let us denote by $cr(z_0,z_1)_{z_0}$ the parallel transport of $cr(z_0,z_1)$ from $\ell_{z_1}$ to $\ell_{z_0}$ along the geodesic $\delta_{z_0,z_1}$ (obtained by conjugation as in the case of the origin, by the value at $t=1$ of the one parameter group in $\u(\theta)$ which determines $\delta_{z_0,z_1}$).  The  $\u(\theta)$-covariance  of the data involved enables one to prove the following:
\begin{coro}\label{el coro}
With the current notations,
$$
mod_{z_0}(\Log_{z_0}(z_1))=\log\left(cr(z_0,z_1)_{z_0}\right).
$$
In particular, $\|\Log_{z_0}(z_1)\|_{z_0}=\|\log(cr(z_0,z_1)_{z_0})\|$.
\end{coro}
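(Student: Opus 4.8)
The plan is to deduce the general statement from its origin case, Theorem \ref{el teo}, by transporting every object along a fixed $\tilde{g}\in\u(\theta)$ with $\tilde{g}\cdot 0=z_0$, whose existence is guaranteed by the transitivity of the action (Theorem \ref{transitiva}). Writing $z=\tilde{g}^{-1}\cdot z_1$, I would show that both sides of the asserted identity are the $\tilde{g}$-conjugates of the corresponding quantities at the origin, so that the equality at $z_0$ is forced by the equality at $0$. Since the paragraph preceding the corollary records that $cr(z_0,z_1)$, $\Log_{z_0}$, $mod_{z_0}$ and $\bi\,\cdot,\cdot\,\bd_{z_0}$ are independent of the particular choice of $\tilde{g}$, it suffices to verify the identity for one such $\tilde{g}$.

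First I would track the right-hand side. The geodesic $\delta_{z_0,z_1}=\tilde{g}\cdot\delta$ is the image under $\tilde{g}$ of the geodesic $\delta$ from $0$ to $z$, and the one-parameter subgroup of $\u(\theta)$ generating it is the $\tilde{g}$-conjugate $t\mapsto \tilde{g}\,e^{tM}\,\tilde{g}^{-1}$ of the subgroup $t\mapsto e^{tM}$ generating $\delta$, where $M$ is the generator in Lemma \ref{lema62}. Since parallel transport along $\delta_{z_0,z_1}$ from $\ell_{z_1}$ back to $\ell_{z_0}$ is implemented by the value at $t=1$ of this conjugated subgroup, and $cr(z_0,z_1)=\tilde{g}\,cr(0,z)\,\tilde{g}^{-1}$ by Definition \ref{cr general}, a direct cancellation of the inner $\tilde{g}^{-1}\tilde{g}$ pairs gives
$$
cr(z_0,z_1)_{z_0}=\tilde{g}\,cr(0,z)_0\,\tilde{g}^{-1}.
$$
Likewise, by the defining conjugation formulas for $\Log_{z_0}$ and for $\bi\,\cdot,\cdot\,\bd_{z_0}$, the positive square root defining the $\theta$-modulus transforms covariantly, yielding $mod_{z_0}(\Log_{z_0}(z_1))=\tilde{g}\,mod_0(\Log_0(z))\,\tilde{g}^{-1}$.

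The crux is then that conjugation by $\tilde{g}$ restricts to an isometric $*$-isomorphism from the fibre $\Gamma_{\ell_0}$ onto $\Gamma_{\ell_{z_0}}$ of the coefficient bundle. This uses crucially that $\tilde{g}\in\u(\theta)$ preserves $\theta$, hence sends $\theta$-normalized generators of $\ell_0$ to $\theta$-normalized generators of $\ell_{z_0}$, so the C$^*$-structures on the two fibres (and the compatibility of the canonical connection and the $\bi\,\cdot,\cdot\,\bd$-product with the action of $\u(\theta)$ noted in Section 10) are intertwined. Because a $*$-isomorphism commutes with the continuous functional calculus, it intertwines $\log$ and the square root. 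Applying Theorem \ref{el teo}, namely $mod_0(\Log_0(z))=\log(cr(0,z)_0)$, and conjugating both sides by $\tilde{g}$, I obtain
$$
mod_{z_0}(\Log_{z_0}(z_1))=\tilde{g}\,\log\!\left(cr(0,z)_0\right)\,\tilde{g}^{-1}=\log\!\left(\tilde{g}\,cr(0,z)_0\,\tilde{g}^{-1}\right)=\log\!\left(cr(z_0,z_1)_{z_0}\right),
$$
which is the formula. The norm equality follows at once: since $\|\Log_{z_0}(z_1)\|_{z_0}=\|\,mod_{z_0}(\Log_{z_0}(z_1))\,\|$ and $\tilde{g}(\cdot)\tilde{g}^{-1}$ is isometric on the fibres, taking operator norms in the displayed identity gives $\|\Log_{z_0}(z_1)\|_{z_0}=\|\log(cr(z_0,z_1)_{z_0})\|$.

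I expect the main obstacle to be precisely the bookkeeping that upgrades the loose phrase ``conjugation by $\tilde{g}$'' into a genuine isometric $*$-isomorphism between the two coefficient fibres under which parallel transport, the $\bi\,\cdot,\cdot\,\bd$-product, and the functional calculus ($\log$ and square root) are \emph{simultaneously} compatible. Once these identifications are pinned down — rather than merely asserted to be covariant — every displayed step becomes formal and the corollary reduces cleanly to the already-established origin case of Theorem \ref{el teo}.
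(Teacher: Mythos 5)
Your proposal is correct and takes essentially the same approach as the paper: the paper gives no detailed proof of Corollary \ref{el coro}, simply invoking ``the $\u(\theta)$-covariance of the data involved,'' and your argument is precisely that covariance computation spelled out --- conjugation by $\tilde{g}$ cancels through the parallel transport and the cross ratio by Definition \ref{cr general}, intertwines the $\theta$-normalized bases and hence the fibrewise C$^*$-structure and functional calculus, and so transports Theorem \ref{el teo} from the origin to $z_0$. The only point you supply beyond the paper's hint is the verification that $\tilde{g}$-conjugation is a coefficient-preserving isomorphism of fibres (it fixes the matrix $a$ of an endomorphism when bases $\xx$ and $\tilde{g}\xx$ in $\k_\theta$ are used), which is exactly the bookkeeping the paper leaves implicit.
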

\bigskip

\begin{figure}[ht]
\centering
\includegraphics[width=0.5\textwidth]{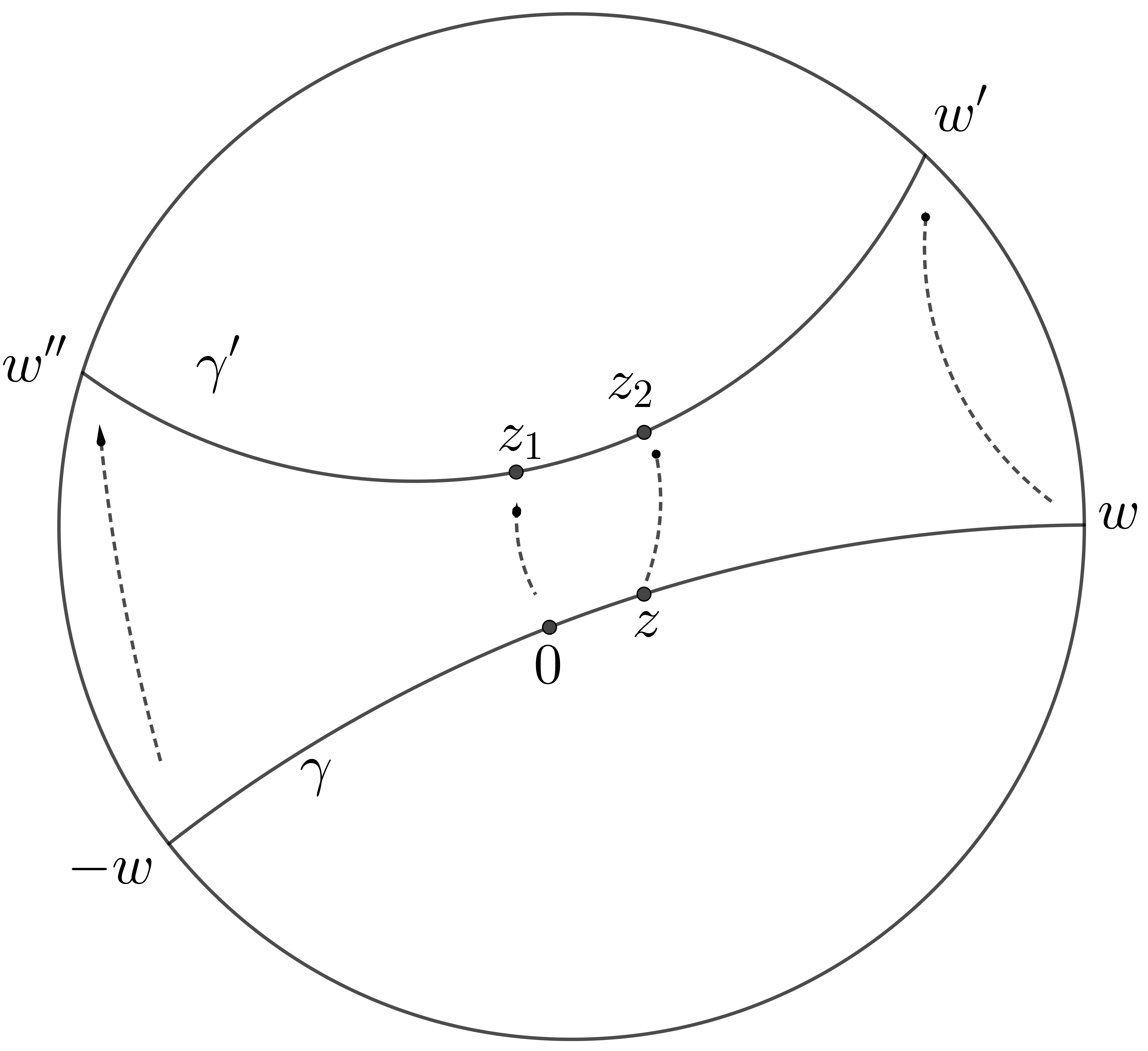}
\end{figure}

\centerline{\bf Figure 2.}
\bigskip

\section{An example.}
Suppose that the algebra $\a$ has a  trace  $\tr$ onto a central subalgebra, that is, there exists a C$^*$- subalgebra $\b\subset Z(\a)$ of the center of $\a$ and a conditional expectation $\tr:\a\to\b$ satisfying $\tr(xy)=\tr(yx)$ for all $x,y\in\a$. This happens, for instance, if  $\a$ is a finite von Neumann algebra. 

A relevant case of this situation is the following. Consider a complex vector bundle $E\to B$ with compact base space $B$, endowed with a Riemannian metric $<e,e'>_b$, $b\in B$, $e,e'\in E_b$ (the fiber of $E$ over $b$). Consider the fiber bundle $End(E)\to B$ of endomorphisms of the vector bundle $E$, and let $\a$ be the algebra $\Gamma(End(E))$ of the continuous global cross sections of $End(E)$. Since each $E_b$ is a (finite dimensional) Hilbert space, $End(E_b)$ is a C$^*$-algebra. The space $\Gamma(End(E))$ of cross sections has therefore the norm $\|\varphi\|=\sup_{b\in B} \|\varphi_b\|$, where  $\varphi_b:E_b\to E_b$ and $\|\varphi_b\|$ is the usual norm of linear  operators.  With this norm, $\a$ is a C$^*$-algebra. The center $Z(\a)$ of this algebra is the space of scalar sections $\lambda$ in $End(E)$ (homotetic in each fiber). The central trace is given by $\tr:\a\to Z(\a)$, $\tr(\sigma)_b=Tr(\sigma_b)$, $b\in B$, with $Tr$ the usual trace of $E_b$. More specifically, $B$ could be a compact manifold, and $E$ the complexification of its tangent bundle, with an Hermitian metric. This case is interesting due to the following observation: in our previous work \cite{tejemas}, we noticed the equivalence, as homogeneous spaces, of the disk $\d$ and the Poincar\'e halfspace $\h$ of the algebra $\a$. This homogeneous space can be thought as the tangent bundle $TG^+$ of the space $G^+$ of positive and invertible elements of $\a$, as explained in \cite{tejemas}. In this context, an element of $\h$ is a pair $(X,a)$ with $a\in G^+$ and $X\in (TG^+)_a$. The element $a\in G^+$ represents a Riemannian  metric in $B$, and a possible vector $X$ (a selfadjoint element of $\a$)  could be the Ricci curvature of the metric $a$. In this manner, the geometry of $\h$ is linked to the deformation of the pairs ({\it Riemannian metric, Ricci curvature}), viewed as elements  of $TG^+$.

Back to the general case (of this example):
$$
\tr \a \to \b\subset Z(\a),
$$
we can define a Hilbertian $\b$-valued inner product, by means of
$$
\langle X, Y \rangle_{\tr, q}= -\tr(\theta(X\xx, Y\yy)).
$$
Indeed, since $\tr$ is tracial, the value of $ -\tr(\theta(X\xx, Y\yy))$ is independent of the choice of $\xx\in\k_\theta$ satisfying $q=\xx\theta(\xx, \ \ )$. On the other hand, $cr(z_0,z_1)$ is an element of $\Gamma_{z_0}$, which has  matrix $a$ in a unital base $\xx\in R(q)$, as explained before. We put $cr(z_0,z_1)_\tr$, for $\tr(a)$. Clearly, $cr(z_0,z_1)_\tr$ does not depend on the basis $\xx$. With these notations, the formula in Corollary \ref{el coro}, can be written
\begin{equation}\label{tracial}
\langle \Log_{z_0} z_1, \Log_{z_0} z_1 \rangle_\tr^{1/2}=\log cr(z_0,z_1)_\tr ,
\end{equation}
which is an identity involving elements in $\b$. 

More specifically, if $\a$ is commutative, we can choose $\tr$ the identity $\a=\b$, and we have

\begin{equation}\label{conmutativa}
|\Log_{z_0} z_1|=\log cr(z_0,z_1),
\end{equation}
as elements in $\a$.

Esteban Andruchow \\
Instituto de Ciencias,  Universidad Nacional de Gral. Sarmiento,
\\
J.M. Gutierrez 1150,  (1613) Los Polvorines, Argentina
\\ 
and Instituto Argentino de Matem\'atica, `Alberto P. Calder\'on', CONICET, 
\\
Saavedra 15 3er. piso,
(1083) Buenos Aires, Argentina.
\\
e-mail: eandruch@ungs.edu.ar

\bigskip

Gustavo Corach\\
Instituto Argentino de Matem\'atica, `Alberto P. Calder\'on', CONICET,
\\
Saavedra 15 3er. piso, (1083) Buenos Aires, Argentina,
\\
and Depto. de Matem\'atica, Facultad de Ingenier\'\i a, Universidad de Buenos Aires, Argentina.
\\
e-mail: gcorach@fi.uba.ar

\bigskip

L\'azaro Recht \\
Departamento de Matem\'atica P y A, 
Universidad Sim\'on Bol\'\i var \\
Apartado 89000, Caracas 1080A, Venezuela  \\
e-mail: recht@usb.ve

\end{document}